\newtheorem*{theorem*}{Theorem}
\newtheorem{theorem}{Theorem}[section]
\newtheorem{corollary}[theorem]{Corollary}
\newtheorem{lemma}[theorem]{Lemma}
\newtheorem{proposition}[theorem]{Proposition}
\theoremstyle{definition}
\newtheorem{question}[theorem]{Question}
\newtheorem{example}[theorem]{Example}
\newtheorem{note}[theorem]{Note}
\title{Fixed-point-free fusion automorphisms}
\author{Andrew Schopieray}
\date{}
\begin{document}

\maketitle

\begin{abstract}
This is a study of fusion ring automorphisms leaving only the trivial element fixed.  We prove that a variety of classical results on fixed-point-free automorphisms of finite groups are true in the generality of fusion rings.  As a result, we show there are $8$ Grothendieck equivalence classes of fusion categories of rank less than $9$ with a fixed-point-free fusion automorphism of prime order, generalizing existing results about modular fusion categories of odd dimension.
\end{abstract}

\section{Introduction}

\par The study of fusion rings and categories in the $21$\textsuperscript{st} century is a continuation of the study of finite groups in the $20$\textsuperscript{th} century.  Every finite group $G$ can be realized as the fusion ring $\mathbb{Z}G$, its integral group ring, and as the fusion category (over $\mathbb{C}$) $\mathrm{Vec}_G^\omega$ for any $\omega\in H^3(G,\mathbb{C}^\times)$.  But not every fusion ring is isomorphic to $\mathbb{Z}G$ for some $G$, and not every fusion ring can be realized as the Grothendieck ring of a fusion category.  So one of the most pervasive questions is where the boundaries between these mathematical objects lie.

\par For example, subgroups of a finite group $G$ correspond to fusion subrings of $\mathbb{Z}G$, and thus fusion subcategories of $\mathrm{Vec}_G^\omega$.  Frobenius-Perron dimension ($\mathrm{FPdim}$) \cite[Section 3.3]{tcat} is the notion analogous to the order of a group for fusion rings and categories, and one might believe that results about orders of finite groups hold true in this more abstract setting.  The classical result is that the order of a finite group is divisible by the order of any of its subgroups, and while it is true that the Frobenius-Perron dimension of a fusion category is divisible by the Frobenius-Perron dimension of any of its fusion subcategories \cite[Theorem 7.17.6]{tcat}, the same is not true for fusion rings in general (Example \ref{oddex}).  Even more, the obvious generalization of the Sylow theorems for finite groups in this analogy fails for both fusion rings and fusion categories, unless additional restrictions are imposed (e.g.\ \cite{MR2037711}).

\par For a second example, solvability of finite groups has an analog for fusion categories \cite[Definition 9.8.1]{tcat} so that the fusion categories $\mathrm{Vec}_G^\omega$ are solvable if and only if $G$ is solvable \cite[Proposition 9.8.5(ii)]{tcat}.  With this definition, many results about solvable groups also apply to fusion categories such as the fact that any fusion category of Frobenius-Perron dimension $p^aq^b$ for primes $p,q\in\mathbb{Z}_{\geq2}$ and nonnegative integers $a,b$ is solvable \cite[Theorem 9.15.9]{tcat}.  With the existence of additional structures such as braidings, other classes of fusion categories of odd Frobenius-Perron dimension have been shown to be solvable \cite{MR4019323,MR3163515}.  It has been conjectured that all fusion categories of odd Frobenius-Perron dimension are solvable \cite{MR4516198}, and at least one notion of solvability has been proposed recently on the level of fusion rings and their generalizations \cite{MR4355908}.

\par Continuing in this thread, \emph{fixed-point-free} automorphisms of finite groups have been studied for over a century.  Of course, ``fixed-point-free'' is a misnomer since the multiplicative identity will be fixed by definition, but this term is firmly established in the group theory literature.  One of the first observations about fixed-point-free automorphisms of finite groups was if $G$ is a finite group and $\phi$ a fixed-point-free automorphism of $G$ of order 2, then $G$ is abelian of odd order and $\phi(g)=g^{-1}$ for all $g\in G$.  In a result attributed to W.\ Burnside \cite{MR0069818}, it is noted that if $G$ is a finite group with a fixed-point-free automorphism of order 3, then $G$ is nilpotent of nilpotency class at most 2.  More generally, in the doctoral thesis of J.\ G.\ Thompson \cite{MR2611480}, it is proven that any finite group with a fixed-point-free automorphism of prime order is nilpotent.  If one utilizes the classification of finite simple groups, there are very condensed proofs of other related facts about fixed-point-free automorphisms of finite groups \cite[Theorem 1.48]{MR0231903}\cite{MR1334233}\cite{MR1943704}.

\par In this manuscript we study how results about finite groups and their fixed-point-free automorphisms generalize to fusion rings and categories.  There is currently very little written about fusion rings with assumptions about their automorphisms that does not fall under the purview of finite group theory, representations of Hopf algebras, or fusion rings related affine Lie algebras and their categorifications (e.g.\ \cite{MR1887583,MR4079742}); we will attempt to recount the relevant pre-existing results here.  Recall that for commutative fusion rings $(R,B)$, the duality antiautomorphism $x\mapsto x^\ast$ for all $x\in B$ is furthermore a fusion automorphism of $R$.  There are several pre-existing results on fusion categories for which this duality is fixed-point-free.  In particular, it is known that if the Frobenius-Perron dimension of a fusion category $\mathcal{C}$ is odd, then $X\not\cong X^\ast$ for all simple objects of $\mathcal{C}$ \cite[Corollary 8.2]{MR2313527}.  When $\mathcal{C}$ is commutative, this gives a fixed-point-free automorphism of the underlying fusion ring.  This statement is false for commutative fusion rings in general with the smallest example having rank $4$ (Example \ref{oddex}).  Note that there is no reason to believe such an automorphism will correspond to a tensor autoequivalence of the fusion category in general.  Conversely, it was observed \cite[Theorem 2.2]{MR2659210} that modular fusion categories $\mathcal{C}$ such that $X\not\cong X^\ast$ for all simple $X$ in $\mathcal{C}$ are integral, i.e.\ $\mathrm{FPdim}(X)\in\mathbb{Z}$ for all $X$ in $\mathcal{C}$, and therefore $\mathrm{FPdim}(\mathcal{C})$ is an odd integer.  We prove that this statement is true for arbitrary fusion rings with fixed-point-free automorphisms of prime order (Theorem \ref{thm1}(1)), which is to say if $(R,B)$ is a fusion ring with a fixed-point free automorphism of prime order $p\in\mathbb{Z}_{\geq2}$, then $R$ is integral, and thus $\mathrm{FPdim}(R)\equiv1\pmod{p}$.  This follows from the fact that fixed-point-free automorphisms of fusion rings act without fixed-points on their isomorphism classes of irreducible representations distinct from $\mathrm{FPdim}$ (Proposition \ref{jan}).

\par Using these general results, we then analyze several special cases.  First we prove that any fixed-point-free involution (order 2) of a commutative fusion ring must be the duality homomorphism (Corollary \ref{cor:com}).  This leaves the lingering and interesting question of whether the existence of a fixed-point-free automorphism of a fusion ring of order $2$ implies commutativity as is the case for finite groups.  Next, we describe all fusion rings with fixed-point-free automorphisms $\phi$ of prime order such that $R$ has $2$ or $3$ $\phi$-orbits of basis elements.  In the former case (Section \ref{subsec:two}), all examples are integral group rings $\mathbb{Z}G$ where $G\cong C_3$ is the cyclic group of order 3, or $G\cong C_2^n$ is an elementary abelian $2$-group such that $2^n-1$ is a Mersenne Prime.  In particular, for any odd prime $p\in\mathbb{Z}_{\geq3}$ such that $p+1$ is not a power of $2$, there does not exist a fusion ring of rank $p+1$ with a fixed-point-free automorphism of any prime order.  When there are exactly $3$ $\phi$-orbits (Section \ref{subsec:three}), $R$ has a nontrivial fusion subring $\mathbb{Z}G$ where $G$ is one of the groups from the previous case.  This follows from the fact that an integral fusion ring whose basis elements have at most $3$ distinct Frobenius-Perron dimensions must have a nontrivial pointed subring (Lemma \ref{gcdlem}).

\par Lastly, we demonstrate there are exactly $10$ fusion rings (Figure \ref{fig:A}) of rank less than $9$ with fixed-point-free automorphisms of prime order $p\in\mathbb{Z}_{\geq2}$ and integer formal codegrees, in the sense of \cite[Section 2]{codegrees}.  Integer formal codegrees are necessary for a fusion ring to be categorifiable, but it is not clear if integrality of formal codegrees is a consequence of the existence of a fixed-point-free automorphism.  Eight of the $10$ classified fusion rings possess categorifications and $3$ do not; six of these $10$ rings are not isomorphic to $\mathbb{Z}G$ for a finite group $G$, $3$ of which are the character rings of the nonabelian groups of orders $3\cdot7$, $3\cdot13$, and $5\cdot11$.  This is in contrast to more constrained results that state that all modular fusion categories of rank at most $11$ \cite[Theorem 4.5]{MR2869100} and subsequently rank at most $15$ \cite[Theorem 6.3(a)]{MR4516198} are pointed.  Many questions are motivated from this initial study and deserve further analysis; we prioritize a few in the following list.

\begin{question}
Are there finitely many fusion rings of a given rank admitting a fixed-point-free automorphism of prime order?
\end{question}

\begin{question}\label{q:dual}
Does there exist a fusion ring (necessarily noncommutative) with a fixed-point-free automorphism of order $2$ which is not duality?
\end{question}

\begin{question}
Is there a nontrivial invertible basis element in every fusion ring with a fixed-point-free automorphism of prime order?
\end{question}

\begin{question}
Does there exist a noncommutative fusion ring with a fixed-point-free automorphism of prime order whose rank is less than 64?
\end{question}



\section{Basic definitions and results}

\par A fusion ring $(R,B)$ is an associative unital ring $R$ which is free as a $\mathbb{Z}$-module with a distinguished basis $B:=\{b_0=1_R,b_1,\ldots,b_n\}$ and anti-involution $b_i\mapsto b_i^\ast=:b_{i^\ast}$ of $B$ (extended linearly to all of $R$) such that $b_ib_j=\sum_{j=0}^nc_{ij}^kb_k$ for some $c_{ij}^k\in\mathbb{Z}_{\geq0}$ (the \emph{fusion rules} of $R$) and $c_{ij}^0=\delta_{ij^\ast}$.  The order of $B$ is denoted $\mathrm{rank}(R)$.  Perhaps the most vital symmetry of a fusion ring is the cyclic invariance of the fusion rules \cite[Proposition 3.1.6]{tcat} which states that for all $x,y,z\in B$,
\begin{equation}\label{cyclicinv}
c_{x,y}^{z^\ast}=c_{y,z}^{x^\ast}=c_{z,x}^{y^\ast}.
\end{equation}

\par By a fusion subring $(R',B')\subset(R,B)$, we mean $(R',B')$ is a fusion ring in its own right and $B'\subset B$.  We define the group $\mathrm{Aut}(R,B)$ as the set of permutations $\phi$ of $B$ such that $\phi$, extended linearly to $R$, is a unital ring automorphism.  Of course, there exist many unital ring automorphisms of $R$ which do not belong to $\mathrm{Aut}(R,B)$ in general.  If $\phi\in\mathrm{Aut}(R,B)$, then $\phi(x)^\ast=\phi(x^\ast)$ for all $x\in B$.   Indeed, applying $\phi$ to $xx^\ast$ yields $c_{\phi(x),\phi(x^\ast)}^{1_R}=1$.  One can replace most results about $\mathrm{Aut}(R,B)$ with analogous  results about \emph{anti}automorphisms, i.e.\ replacing the condition $\phi(xy)=\phi(x)\phi(y)$ with $\phi(xy)=\phi(y)\phi(x)$.  We will only do so briefly in Section \ref{sectiontoo} in the context of fixed-point-free involutions,. i.e.\ $\phi\circ\phi=\mathrm{id}_R$.

\begin{example}
There are myriad examples of fixed-point-free automorphisms of prime order of the integral group rings $\mathbb{Z}G$ for finite groups $G$.  It is a well-known exercise to show that there exists a fixed-point-free automorphism of order 2 of a finite group $G$ if and only if $G$ is abelian of odd order and the automorphism has $g\mapsto g^{-1}$ for all $g\in G$. 

\par When the order of the fixed-point-free automorphism is an odd prime $p\in\mathbb{Z}_{\geq3}$, there exist non-abelian examples $G$ which we will list by \texttt{GAP} identification numbers $[|G|,\#]$.  The smallest non-abelian examples have order $64$, and all such groups were classified in \cite{MR1143086}.  In particular, there are $7$ isomorphism classes of finite groups of order $64$ which admit a fixed-point-free automorphism of prime order $p$; three of these are non-abelian groups: group $[64,82]$ is an example for $p=7$, and both $[64,242]$ and $[64,245]$ are examples for $p=3$.  We note that the order of $\mathrm{Aut}(G)$ for each of these groups is over $9000$.
\end{example}

\par Let $(R,B)$ be a fusion ring.  Let $\mathrm{Irr}(R)$ be the set of irreducible complex representations of $R$ up to isomorphism, i.e.\ unital algebra homomorphisms $\varphi:R\to\mathrm{End}(V)$ for some finite-dimensional complex vector space $V$.  The only irreducible representation existing for all fusion rings $(R,B)$ is the \emph{Frobenius-Perron} representation $\mathrm{FPdim}:R\to\mathbb{R}_{\geq1}$ which is the unique unital ring homomorphism $\rho:R\to\mathbb{C}$ such that $\rho(x)>0$ for all $x\in B$ \cite[Proposition 3.3.6(3)]{tcat}.  Returning to the motivation of the definition of $\mathrm{Aut}(R,B)$, the requirement that $\phi\in\mathrm{Aut}(R,B)$, versus $\phi$ being an arbitrary unital ring automorphism of $R$, is mild but ensures that for all $x\in R$, $\mathrm{FPdim}(\phi(x))=\mathrm{FPdim}(x)$ \cite[Proposition 3.3.13]{tcat}.  If $\mathrm{FPdim}(x)=1$ for some $x\in R$, then $x\in B$, $xx^\ast=1_R$, and we call $x$ \emph{invertible}.  The $\mathbb{Z}$-linear span of all invertible elements of $R$ form a fusion subring $R_\mathrm{pt}$ of $R$, which is characteristic in $R$, in the sense that it is preserved under all $\phi\in\mathrm{Aut}(R,B)$.  It is clear that the invertible objects of $R$ form a finite group $G$, and $R=R_\mathrm{pt}$ if and only if $R\cong\mathbb{Z}G$.  In general, the group $G$ acts on $R$ by left or right multiplication, and for each $x\in B$, the stabilizer subgroup $G_x\subset G$ is the subset of $g\in G$ such that $c_{x,x^\ast}^g=c_{x,x^\ast}^{g^{-1}}=c_{g,x}^x=1$ by the cyclic invariance of the fusion rules (Equation (\ref{cyclicinv})) and the fact that $xx^\ast$ is self-dual. 

\par The finite group $\mathrm{Aut}(R,B)$ acts on $\mathrm{Irr}(R)$; for all $\varphi\in\mathrm{Irr}(R)$ and $\phi\in\mathrm{Aut}(R,B)$, $\varphi\circ \phi\in\mathrm{Irr}(R)$ which may or may not be distinct from $\varphi$.  The characters $\chi_\varphi:=\mathrm{Tr}(\varphi):R\to\mathbb{C}$ of $\varphi\in\mathrm{Irr}(R)$ satisfy the familiar orthogonality relation \cite[Proposition 2.12]{MR2535395} which is if $\varphi\not\cong\rho\in\mathrm{Irr}(R)$,
\begin{equation}\label{orthochar}
\sum_{x\in B}\chi_\varphi(x)\chi_\rho(x^\ast)=0.
\end{equation}
When $\rho=\mathrm{FPdim}$, or in general when $\chi_\rho$ is real-valued, one can ignore the duality in this formula, and we will do so in the future without note.

\begin{proposition}\label{jan}
Let $(R,B)$ be a fusion ring and $\phi\in\mathrm{Aut}(R,B)$ of prime order $p\in\mathbb{Z}_{\geq2}$.  If $1_R$ is the unique fixed-point of the action of $\phi$ on $B$, then $\mathrm{FPdim}$ is the unique fixed-point of the action of $\phi$ on $\mathrm{Irr}(R)$.
\end{proposition}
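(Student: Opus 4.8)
The plan is to compare the permutation action of $\phi$ on $B$ with its action on $\mathrm{Irr}(R)$ through the semisimple $\mathbb{C}$-algebra $V:=R\otimes_{\mathbb{Z}}\mathbb{C}$, on which $\phi$ induces an algebra automorphism $\Phi$ of order $p$. Since $\phi$ fixes only $1_R$ and has prime order, every other $\phi$-orbit in $B$ has size exactly $p$, so $\mathrm{rank}(R)\equiv 1\pmod p$, and because $\phi^k$ generates $\langle\phi\rangle$ for $1\le k\le p-1$, each such $\phi^k$ also fixes only $1_R$. That $\mathrm{FPdim}$ is fixed is immediate from $\mathrm{FPdim}\circ\phi=\mathrm{FPdim}$, so the content is the \emph{uniqueness} of this fixed point; write $f$ for the number of $\phi$-fixed elements of $\mathrm{Irr}(R)$, so that $f\ge 1$ and I must show $f=1$.

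First I would dispatch the case where $R$ is commutative, which is the cleanest. Then $V\cong\mathbb{C}^{\mathrm{Irr}(R)}$ with $|\mathrm{Irr}(R)|=\mathrm{rank}(R)$, and the character table $M=(\varphi(b_x))_{\varphi\in\mathrm{Irr}(R),\,b_x\in B}$ is a square invertible matrix. The automorphism $\phi$ permutes the columns (via $b_x\mapsto\phi(b_x)$) and the rows (via $\varphi\mapsto\varphi\circ\phi$), and $M$ intertwines these two permutation representations because $M_{\varphi\circ\phi,\,x}=\varphi(\phi(b_x))=M_{\varphi,\,\phi(x)}$. By Brauer's permutation lemma the two permutation representations of $\langle\phi\rangle$ are then similar, so their fixed-point counts agree: $f=|B^{\phi}|=1$.

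For general $R$ I would use the Wedderburn decomposition $V\cong\prod_{\varphi\in\mathrm{Irr}(R)}\mathrm{Mat}_{d_\varphi}(\mathbb{C})$, where $\Phi$ permutes the simple factors according to the action of $\phi$ on $\mathrm{Irr}(R)$. Since $\Phi$ acts on the basis $B$ of $V$ by the permutation $\phi$, we have $\mathrm{Tr}(\Phi^k\mid V)=|B^{\phi^k}|=1$ for $1\le k\le p-1$. On the other hand, a factor not fixed by $\Phi^k$ contributes $0$ to this trace, while on a fixed factor $\mathrm{Mat}_{d_\varphi}(\mathbb{C})$ the Skolem--Noether theorem writes $\Phi^k$ as conjugation by some $g$ (which we may take to have finite order, so its eigenvalues are roots of unity and $\mathrm{Tr}(g^{-1})=\overline{\mathrm{Tr}(g)}$), contributing $|\mathrm{Tr}(g)|^2\ge 0$. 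The factor of $\mathrm{FPdim}$ (where $d=1$) already contributes $1$, so every other $\phi$-fixed $\varphi$ must have implementer satisfying $\mathrm{Tr}(g^k)=0$ for $1\le k\le p-1$; since $g^p$ is scalar this forces the eigenvalues of $g$ to be all $p$-th roots of unity with equal multiplicity, whence $p\mid d_\varphi$.

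The main obstacle is therefore to exclude these putative higher-dimensional fixed irreducibles in the noncommutative case, where a traceless implementer is not immediately contradictory (contrast the commutative case, where $d_\varphi=1$ forces $\mathrm{Tr}(g)=g\ne 0$). To rule them out I would exploit the distinguished positivity of $\mathrm{FPdim}$. Applying the orthogonality relation (\ref{orthochar}) with $\rho=\mathrm{FPdim}$ to a $\phi$-fixed $\varphi\ne\mathrm{FPdim}$, and grouping $B$ into the fixed point $1_R$ and the free $\phi$-orbits $O$ (on each of which $\chi_\varphi$ and $\mathrm{FPdim}$ are constant by $\phi$-invariance), yields $d_\varphi+p\sum_{O}\chi_\varphi(x_O)\,\mathrm{FPdim}(x_O)=0$. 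I expect the crux to be turning this identity into a contradiction by combining the Frobenius--Perron bound $|\chi_\varphi(x)|\le d_\varphi\,\mathrm{FPdim}(x)$ with the positivity of the formal codegrees (equivalently, Schur orthogonality for the trace form making $B$ orthonormal), pinning down the numerics tightly enough to force $f=1$.
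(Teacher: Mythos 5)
Your commutative case is complete and correct, and it is a genuinely different argument from the paper's: where the paper pairs a putative fixed $\varphi$ against $\mathrm{FPdim}$ in the orthogonality relation (Equation (\ref{orthochar})) and extracts an algebraic-integrality contradiction, you obtain uniqueness of the fixed point structurally, from invertibility of the character table and Brauer's permutation lemma, with no integrality input at all. Your Skolem--Noether trace computation in the noncommutative case is also correct as far as it goes: it legitimately proves that a $\phi$-fixed $\varphi\neq\mathrm{FPdim}$ would require a traceless implementer, hence $p\mid\dim(\varphi)$.

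However, the proposition is asserted for all fusion rings, and your noncommutative case ends with an expectation rather than an argument: you never exclude fixed irreducible representations with $p\mid\dim(\varphi)$, and the ingredients you propose do not visibly do it. From $\dim(\varphi)+p\sum_{O}\chi_\varphi(x_O)\mathrm{FPdim}(x_O)=0$, the Frobenius--Perron bound $|\chi_\varphi(x)|\le\dim(\varphi)\mathrm{FPdim}(x)$ gives only $\dim(\varphi)/p\le\dim(\varphi)\sum_O\mathrm{FPdim}(x_O)^2$, which is vacuous, and the counting constraints (e.g.\ $\mathrm{rank}(R)=\sum_\varphi\dim(\varphi)^2\equiv1\pmod p$) remain perfectly consistent with such a $\varphi$ existing, since it contributes $0$ modulo $p$ everywhere. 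It is worth noting how this compares with the paper: the paper's proof is exactly your orthogonality identity, except that the coefficient $\chi_\varphi(1_R)$ is written as $1$ rather than $\dim(\varphi)$, so its contradiction (``$1/p$ is an algebraic integer'') is available precisely when $\dim(\varphi)=1$---which covers commutative rings and the one-dimensional characters needed for Theorem \ref{thm1}(1)---while for $\dim(\varphi)>1$ it yields only $p\mid\dim(\varphi)$, the same constraint you derived by other means. In short, you stall at exactly the point where the paper's argument, read literally, also stops short; but since your write-up leaves that step as a hope rather than closing it, it is not a complete proof of the stated proposition.
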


\begin{proof}
Let $\phi\in\mathrm{Aut}(R,B)$ be fixed-point-free of prime order $p\in\mathbb{Z}_{\geq2}$.  Assume to the contrary that $\varphi$ is a fixed-point of the action of $\phi$ on $\mathrm{Irr}(R)$, which is to say that $\varphi\circ\phi^j=\varphi$, and thus $\chi_{\varphi\circ\phi^j}=\chi_\varphi$, for all $1\leq j\leq p$.  In particular, $\chi_\varphi(x)=\chi_{\varphi}(\phi^j(x))$ for all $1\leq j\leq p$ and $x\in B$.  Let $\Gamma$ be a set of representatives of nontrivial $\phi$-orbits in $B$, which all have order $p$ by the assumption that $p$ is prime.  Then applying the orthogonality relation of Equation (\ref{orthochar}) to $\chi_\varphi$ and $\mathrm{FPdim}$,
\begin{equation}
0=\sum_{x\in B}\chi_\varphi(x)\mathrm{FPdim}(x^\ast)=1+p\sum_{x\in\Gamma}\chi_\varphi(x)\mathrm{FPdim}(x),
\end{equation}
which erroneously implies $1/p$ is an algebraic integer.  So we must conclude $\mathrm{FPdim}$ is the unique fixed-point of $\phi$ acting on $\mathrm{Irr}(R)$.
\end{proof}

A coarser, but more easily computable, invariant of a fusion ring $(R,B)$ is its multi-set of formal codegrees, indexed by irreducible representations of $R$.  For each irreducible $\varphi:R\to\mathrm{End}(V)$, the \emph{formal codegree} $f_\varphi\in\mathbb{C}$ of $\varphi$ can be defined in terms of the central element $\alpha:=\sum_{b\in B}bb^\ast$ which satisfies  $\varphi(\alpha)=\dim(\varphi)f_{\varphi}\mathrm{id}_V$ \cite[Lemma 2.6]{codegrees}.  Hence the multi-set of the formal codegrees $f_\varphi$ for $\varphi\in\mathrm{Irr}(R)$ are the eigenvalues of (left) multiplication by $\alpha$ occurring with multiplicity $\dim(\varphi)^2$ \cite[Remark 2.11]{ost15} and so are algebraic integers closed under Galois conjugacy.  Furthermore the Frobenius-Perron dimension $\mathrm{FPdim}(R):=f_{\mathrm{FPdim}}=\sum_{x\in B}\mathrm{FPdim}(x)^2$ is one of the formal codegrees of $R$.  These numerical invariants are heavily controlled by the constraint \cite[Proposition 2.10]{ost15}
\begin{equation}\label{codegrees}
\sum_{\varphi\in\mathrm{Irr}(R)}\dfrac{\dim(\varphi)}{f_\varphi}=1,
\end{equation}
and the fact that they are all real numbers greater than or equal to $1$ \cite[Remark 2.12]{ost15}.  It is clear that for any $\phi\in\mathrm{Aut}(R,B)$, $\phi(\alpha)=\alpha$, and therefore $f_{\varphi}=f_{\varphi\circ\phi}$.  The following theorem summarizes the above results for fixed-point-free automorphisms of prime order.

\begin{theorem}\label{thm1}
Let $(R,B)$ be a fusion ring and $\phi\in\mathrm{Aut}(R,B)$ of prime order $p\in\mathbb{Z}_{\geq2}$.  If $\phi$ is fixed-point-free, then
\begin{enumerate}
\item $\mathrm{FPdim}(x)\in\mathbb{Z}$ for all $x\in B$ and $\mathrm{FPdim}(R)\equiv1\pmod{p}$;
\item $\mathrm{rank}(R)$ and the number of $\varphi\in\mathrm{Irr}(R)$ with $\dim(\varphi)=1$ are congruent to 1 modulo $p$; 
\item the number of $\varphi\in\mathrm{Irr}(R)$ with $\dim(\varphi)=k$ for any $k\in\mathbb{Z}_{\geq2}$ is divisible by $p$;
\item the formal codegrees of the $\phi$-orbit of $\varphi\in\mathrm{Irr}(R)$ are equal.
\end{enumerate}
\end{theorem}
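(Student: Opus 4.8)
The plan is to derive all four parts from Proposition \ref{jan}, which guarantees that $\mathrm{FPdim}$ is the only representation in $\mathrm{Irr}(R)$ fixed by the induced action of $\phi$. Three of the four statements are essentially bookkeeping once this is in hand. Since $\phi$ is a ring automorphism, $\varphi$ and $\varphi\circ\phi$ act on the same vector space, so $\phi$ preserves the assignment $\varphi\mapsto\dim(\varphi)$ and therefore acts on each set $\mathrm{Irr}_k(R):=\{\varphi:\dim(\varphi)=k\}$ separately. As $p$ is prime, every $\phi$-orbit has size $1$ or $p$, and Proposition \ref{jan} says the only singleton orbit is $\{\mathrm{FPdim}\}\subset\mathrm{Irr}_1(R)$. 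Counting orbits then gives $|\mathrm{Irr}_1(R)|\equiv1\pmod p$ and $|\mathrm{Irr}_k(R)|\equiv0\pmod p$ for $k\geq2$, which is (2) and (3); similarly $\phi$ fixes only $1_R$ in $B$, so $\mathrm{rank}(R)=|B|\equiv1\pmod p$, completing (2). Part (4) is immediate from the observation recorded just before the statement that $\phi(\alpha)=\alpha$ forces $f_\varphi=f_{\varphi\circ\phi}$, so the codegrees are constant along each $\phi$-orbit $\{\varphi\circ\phi^j\}$.

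The substantive step is the integrality in (1), and here the key is a Galois argument. The values $\mathrm{FPdim}(x)$ are eigenvalues of the integer matrices of left multiplication, hence algebraic integers, so I would fix a number field containing them and let $\sigma$ be any automorphism of $\bar{\mathbb{Q}}/\mathbb{Q}$. Because the fusion rules $c_{ij}^k$ are integers, $\psi_\sigma:=\sigma\circ\mathrm{FPdim}$ is again a unital ring homomorphism $R\to\mathbb{C}$, i.e.\ a one-dimensional element of $\mathrm{Irr}(R)$. The crucial check is that $\psi_\sigma$ is $\phi$-fixed: since $\mathrm{FPdim}\circ\phi=\mathrm{FPdim}$ (the defining property of $\mathrm{Aut}(R,B)$), we have $\psi_\sigma\circ\phi=\sigma\circ\mathrm{FPdim}\circ\phi=\sigma\circ\mathrm{FPdim}=\psi_\sigma$. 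By Proposition \ref{jan} the only $\phi$-fixed representation is $\mathrm{FPdim}$, forcing $\psi_\sigma=\mathrm{FPdim}$, i.e.\ $\sigma(\mathrm{FPdim}(x))=\mathrm{FPdim}(x)$ for every $x\in B$. As $\sigma$ was arbitrary, each $\mathrm{FPdim}(x)$ is a rational algebraic integer, hence lies in $\mathbb{Z}$. The one point demanding care is the legitimacy of this Galois action, but since I only apply it to the one-dimensional representation $\mathrm{FPdim}$, there is no need for a Galois theory of higher-dimensional characters and the argument stays elementary; I expect this Galois step to be the heart of the proof, with everything else reducing to orbit counting.

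Finally I would obtain the congruence $\mathrm{FPdim}(R)\equiv1\pmod p$ by summing over $\phi$-orbits. Writing $\Gamma$ for a set of representatives of the nontrivial orbits in $B$ (each of size $p$), and using $\mathrm{FPdim}(\phi^j(x))=\mathrm{FPdim}(x)$ together with the integrality just established, I get
\begin{equation}
\mathrm{FPdim}(R)=\sum_{x\in B}\mathrm{FPdim}(x)^2=1+p\sum_{x\in\Gamma}\mathrm{FPdim}(x)^2\equiv1\pmod p,
\end{equation}
which completes (1). Thus the only genuinely new ingredient beyond Proposition \ref{jan} is the Galois argument, and the remaining assertions follow from the $\phi$-orbit structure on $B$ and on $\mathrm{Irr}(R)$.
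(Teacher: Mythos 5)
Your proposal is correct and follows essentially the same route as the paper's proof: the Galois argument (any $\sigma\in\mathrm{Gal}(\overline{\mathbb{Q}}/\mathbb{Q})$ applied to $\mathrm{FPdim}$ yields another $\phi$-fixed character, which Proposition \ref{jan} forces to equal $\mathrm{FPdim}$, giving integrality), the congruences by collecting $B$ and $\mathrm{Irr}(R)$ into $\phi$-orbits of size $1$ or $p$, and part (4) from the observation that $\phi(\alpha)=\alpha$ implies $f_\varphi=f_{\varphi\circ\phi}$. The only cosmetic difference is that you spell out the orbit bookkeeping and the dimension-preservation of the $\phi$-action in more detail than the paper, which compresses these into one sentence.
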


\begin{proof}
Recall that $\mathrm{FPdim}(x)$ are algebraic integers for all $x\in B$ as eigenvalues of nonnegative integer matrices.  Proposition \ref{jan} implies that $\mathrm{FPdim}$ is the unique character $\chi$ of $R$ such that $\chi(x)=\chi(\phi(x))$ for all $x\in B$.  But for any $\sigma\in\mathrm{Gal}(\overline{\mathbb{Q}}/\mathbb{Q})$ where $\overline{\mathbb{Q}}$ is the algebraic closure of $\mathbb{Q}$, the character $\sigma(\mathrm{FPdim})$ has this property since $\mathrm{FPdim}$ is preserved by $\phi$, thus $\sigma(\mathrm{FPdim})=\mathrm{FPdim}$.  Therefore $\mathrm{FPdim}(x)$ are rational integers for all $x\in B$.  Moreover $\mathrm{FPdim}(R)\equiv1\pmod{p}$ by collecting $x\in B$ by $\phi$-orbits.  The second and third results follow as any nontrivial $\phi$-orbit in $\mathrm{Irr}(R)$ has order $p$, and the final claim is true for any $\phi\in\mathrm{Aut}(R,B)$\end{proof}

\begin{corollary}\label{cor:2}
Let $(R,B)$ be a commutative fusion ring.  If $2$ divides $\mathrm{FPdim}(R)$, then there exists nontrivial $x\in B$ with $x=x^\ast$.
\end{corollary}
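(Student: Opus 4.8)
The plan is to deduce this directly from Theorem \ref{thm1}(1) by proving the contrapositive: I will assume that $R$ has no nontrivial self-dual basis element, i.e.\ $x\neq x^\ast$ for every $x\in B\setminus\{1_R\}$, and show that $\mathrm{FPdim}(R)$ is odd. The key observation is that this hypothesis is precisely the statement that the duality map $\phi\colon x\mapsto x^\ast$ is fixed-point-free on $B$, since $1_R=1_R^\ast$ is always a fixed point and by assumption it is the only one.

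The step that makes Theorem \ref{thm1} applicable is commutativity. For a general fusion ring the duality $x\mapsto x^\ast$ is only an antiautomorphism, but for commutative $(R,B)$ it is an honest element of $\mathrm{Aut}(R,B)$, as recalled in Section \ref{sectiontoo} of the preliminaries. I would next check that $\phi$ has order exactly the prime $2$: one always has $\phi\circ\phi=\mathrm{id}_R$ since $x^{\ast\ast}=x$, so the order is $1$ or $2$, and the order is $1$ only if every basis element is self-dual. Under the contrapositive hypothesis, as soon as $\mathrm{rank}(R)\geq2$ there is a nontrivial basis element, which is then not self-dual, so $\phi\neq\mathrm{id}_R$ and the order is exactly $2$. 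The remaining edge case $\mathrm{rank}(R)=1$ forces $R\cong\mathbb{Z}$ and $\mathrm{FPdim}(R)=1$, so there the hypothesis $2\mid\mathrm{FPdim}(R)$ fails and the statement holds vacuously.

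With $\phi$ a fixed-point-free automorphism of prime order $p=2$, Theorem \ref{thm1}(1) immediately yields $\mathrm{FPdim}(R)\equiv1\pmod{2}$, i.e.\ $\mathrm{FPdim}(R)$ is odd. This is exactly the contrapositive of the desired implication, completing the argument. I do not expect any genuine obstacle in this proof; the entire content is the recognition that ``no nontrivial self-dual basis element'' is synonymous with ``duality is a fixed-point-free involution,'' after which commutativity upgrades duality to an automorphism and Theorem \ref{thm1}(1) does all the work. The only items warranting an explicit line are the rank-one edge case and the verification that $\phi$ is nontrivial, both of which are immediate.
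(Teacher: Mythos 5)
Your proof is correct and follows essentially the same route as the paper: recognize that the hypothesis of no nontrivial self-dual basis element makes duality a fixed-point-free automorphism of order $2$ (using commutativity to upgrade the antiautomorphism to an element of $\mathrm{Aut}(R,B)$), then apply Theorem \ref{thm1}(1) in contrapositive form. Your extra attention to the rank-one case and to verifying that the order is exactly $2$ is a small refinement the paper leaves implicit, but it does not change the argument.
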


\begin{proof}
Assume for all nontrivial $x\in B$, $x\neq x^\ast$.  Then duality is a fixed-point-free automorphism of order $2$ since $R$ is commutative.  The contrapositive of Theorem \ref{thm1}(1) then implies $2\nmid\mathrm{FPdim}(R)$.
\end{proof}

\begin{example}\label{oddex}
The converse of Corollary \ref{cor:2} is false.   Consider the commutative rank 4 fusion ring $(R,B)$ with fusion rules
\begin{align}
\left[\begin{array}{cccc}
1&0&0&0 \\
0&1&0&0 \\
0&0&1&0 \\
0&0&0&1
\end{array}\right]\qquad
\left[\begin{array}{cccc}
0&1&0&0 \\
1&0&0&0 \\
0&0&1&0 \\
0&0&0&1
\end{array}\right] \\
\left[\begin{array}{cccc}
0&0&1&0 \\
0&0&1&0 \\
1&1&1&0 \\
0&0&0&2
\end{array}\right]\qquad
\left[\begin{array}{cccc}
0&0&0&1 \\
0&0&0&1 \\
0&0&0&2 \\
1&1&2&1 
\end{array}\right]
\end{align}
The Frobenius-Perron dimensions of $x\in B$ are $1,1,2,3$ and $\mathrm{FPdim}(R)=1^2+1^2+2^2+3^2=3\cdot5$.  But every basis element is self-dual.
\end{example}

So we see the relationship between the parity of Frobenius-Perron dimensions and duality in fusion rings is subtle.  For example, there exist fusion rings whose Frobenius-Perron dimensions of nontrivial basis elements are all odd, but the Frobenius-Perron dimension of the ring is even, e.g.\ $\mathbb{Z}G$ for a group of even order.  But there cannot exist a fusion ring whose Frobenius-Perron dimensions of nontrivial basis elements are all even, and the Frobenius-Perron dimension of the ring is odd.  Indeed, taking the Frobenius-Perron dimension of $xx^\ast$ for any nontrivial basis element $x$ in such a ring will lead to a contradiction.  This reasoning provides a ``local'' version of Corollary \ref{cor:2} which does not require commutativity.

\begin{proposition}\label{evenprop}
Let $(R,B)$ be a fusion ring.  If $2$ divides $\mathrm{FPdim}(x)^2$ for some $x\in B$, then there exists nontrivial $y\in B$ with $y=y^\ast$.
\end{proposition}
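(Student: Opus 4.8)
The plan is to prove the contrapositive: assuming $B$ contains no nontrivial self-dual element, I would show that $2$ fails to divide $\mathrm{FPdim}(x)^2$ for every $x\in B$, where "divides" is read in the ring of algebraic integers (exactly as in the proof of Proposition \ref{jan}, where the contradiction surfaced as "$1/p$ is an algebraic integer"). The engine is the single identity obtained by applying $\mathrm{FPdim}$ to the basis expansion of $xx^\ast$:
\begin{equation}
\mathrm{FPdim}(x)^2=\mathrm{FPdim}(xx^\ast)=1+\sum_{\substack{y\in B\\ y\neq 1_R}}c_{x,x^\ast}^{y}\,\mathrm{FPdim}(y),
\end{equation}
where the leading $1$ is $c_{x,x^\ast}^{1_R}=\delta_{x,x}=1$.

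The key structural input is that $xx^\ast$ is self-dual, since $(xx^\ast)^\ast=(x^\ast)^\ast x^\ast=xx^\ast$. Expanding both $xx^\ast$ and its dual in $B$ and matching coefficients gives $c_{x,x^\ast}^{y}=c_{x,x^\ast}^{y^\ast}$ for all $y$; one could equivalently extract this from the cyclic invariance of Equation (\ref{cyclicinv}). Combined with the standard fact that $\mathrm{FPdim}(y)=\mathrm{FPdim}(y^\ast)$, this lets me organize the sum over nontrivial $y$ into dual pairs $\{y,y^\ast\}$. Under the standing assumption that no nontrivial basis element is self-dual, each such pair consists of two distinct elements, so its joint contribution is $2\,c_{x,x^\ast}^{y}\mathrm{FPdim}(y)$.

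Hence the entire nontrivial sum equals $2\beta$, where $\beta$ is a nonnegative-integer combination of Frobenius-Perron dimensions and therefore an algebraic integer (each $\mathrm{FPdim}(y)$ being an eigenvalue of a nonnegative integer matrix). This yields $\mathrm{FPdim}(x)^2=1+2\beta$. If $2$ divided $\mathrm{FPdim}(x)^2$, in the sense that $\mathrm{FPdim}(x)^2/2$ were an algebraic integer, then $1/2=\mathrm{FPdim}(x)^2/2-\beta$ would be an algebraic integer, which is absurd. The main subtlety to flag is precisely this: Frobenius-Perron dimensions are \emph{not} assumed to be rational integers here, so the conclusion must be phrased as the identity $\mathrm{FPdim}(x)^2=1+2\beta$, whose whole purpose is to force the contradiction $1/2\in\overline{\mathbb{Q}}\cap\overline{\mathbb{Z}}$ without any integrality hypothesis. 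The only genuine computation is the coefficient symmetry $c_{x,x^\ast}^{y}=c_{x,x^\ast}^{y^\ast}$, and it is immediate from self-duality of $xx^\ast$.
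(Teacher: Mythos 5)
Your proof is correct and takes essentially the same route as the paper's: both use self-duality of $xx^\ast$ to obtain $c_{x,x^\ast}^y=c_{x,x^\ast}^{y^\ast}$, pair the nontrivial summands into dual orbits to write $\mathrm{FPdim}(x)^2=1+2\beta$ with $\beta$ an algebraic integer, and conclude that $1/2$ would be an algebraic integer, which is absurd. The only differences are presentational: you argue the contrapositive where the paper argues by contradiction, and you make explicit the (correct) reading of divisibility in the ring of algebraic integers that the paper uses implicitly.
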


\begin{proof}
Assume to the contrary, that $y\neq y^\ast$ for all nontrivial $y\in B$.  Since $xx^\ast$ is self-dual, then $c_{x,x^\ast}^y=c_{x,x^\ast}^{y^\ast}$ for all $y\in B$.  Let $\Gamma\subset B$ be representatives of the nontrivial orbits of $B$ under the duality antiautomorphism.  We compute
\begin{align}
\mathrm{FPdim}(x)^2&=\mathrm{FPdim}(xx^\ast)\\
&=1+2\sum_{y\in\Gamma}c_{x,x^\ast}^y\mathrm{FPdim}(y)
\end{align}
and therefore upon rearrangement,
\begin{align}
-\dfrac{1}{2}&=\dfrac{-\mathrm{FPdim}(x)^2}{2}+\sum_{y\in\Gamma}c_{x,x^\ast}^y\mathrm{FPdim}(y).\label{n}
\end{align}
The right-hand side of Equation (\ref{n}) is an algebraic integer since $2$ divides $\mathrm{FPdim}(x)^2$, but the left-hand side is not. 
\end{proof}
The relationship between fixed-point-free automorphisms of order $2$, parity of Frobenius-Perron dimensions, and duality is discussed in greater detail in Section \ref{sectiontoo}.

Another consequence of Theorem \ref{thm1}(3) is that the rank of a noncommutative fusion ring with a fixed-point-free automorphism of prime order $p\in\mathbb{Z}_{\geq2}$ cannot be too small relative to $p$.  To achieve a reasonable lower bound for our purposes requires the following elementary lemma.

\begin{lemma}\label{lem:meh}
There is no fusion ring $(R,B)$ such that $\mathrm{FPdim}(R)\in\mathbb{Z}_{\geq1}$ has multiplicity $1$ as a formal codegree, and $R$ has at exactly $2$ distinct formal codegrees.
\end{lemma}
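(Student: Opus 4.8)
The plan is to assume such a fusion ring $(R,B)$ exists and extract a contradiction from the codegree identity in Equation~(\ref{codegrees}), using only the positivity, Galois-stability, and maximality of the formal codegrees. Set $d:=\mathrm{FPdim}(R)\in\mathbb{Z}_{\geq1}$. Since $d$ occurs with multiplicity $1$, the representation $\mathrm{FPdim}$ is the only $\varphi\in\mathrm{Irr}(R)$ with $f_\varphi=d$; because there are exactly two distinct codegrees, every remaining $\varphi\in\mathrm{Irr}(R)$ (and there is at least one) shares a single common codegree $f\neq d$.

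First I would determine that $f$ is a positive rational integer. The multiset of formal codegrees is closed under the Galois action, and $d\in\mathbb{Q}$ is fixed by it; if some conjugate $\sigma(f)$ equalled $d$, then $f=\sigma^{-1}(d)=d$, a contradiction. Hence every Galois conjugate of $f$ is a codegree different from $d$, and since $f$ is the only such value, all conjugates of $f$ equal $f$. Thus $f\in\mathbb{Q}$, and being an algebraic integer, $f\in\mathbb{Z}_{\geq1}$.

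Next I would substitute into Equation~(\ref{codegrees}), separating the $\mathrm{FPdim}$ term from the rest:
\[
\frac{1}{d}+\frac{1}{f}\sum_{\varphi\neq\mathrm{FPdim}}\dim(\varphi)=1.
\]
Writing $S:=\sum_{\varphi\neq\mathrm{FPdim}}\dim(\varphi)$, a positive integer, this rearranges to $dS=f(d-1)$; in particular $d\geq2$, since $d=1$ would force $S=0$. As $\gcd(d,d-1)=1$, it follows that $d\mid f$, whence $f\geq d$. On the other hand, the formal codegrees are the eigenvalues of left multiplication by the nonnegative integer matrix $\alpha=\sum_{b\in B}bb^\ast$, and $\mathrm{FPdim}(R)=d$ is its Perron--Frobenius eigenvalue, so $f\leq d$. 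Combining the two inequalities gives $f=d$, contradicting $f\neq d$.

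The argument is short, and its crux is the pincer $d\leq f\leq d$: the lower bound comes from the integrality of the codegree identity (via $d\mid f$), and the upper bound uses that $\mathrm{FPdim}(R)$ is the largest formal codegree. I expect the only point requiring any care to be this maximality, together with the Galois step that forces $f\in\mathbb{Z}$; everything else is elementary arithmetic. If one prefers to avoid invoking maximality of the Perron codegree, an equivalent finish uses only $\mathrm{rank}(R)=\sum_{\varphi}\dim(\varphi)^2\leq\sum_{b\in B}\mathrm{FPdim}(b)^2=\mathrm{FPdim}(R)=d$: were $f>d$, then $d\mid f$ would force $f\geq2d$ and hence $S\geq2(d-1)$, contradicting $S\leq\mathrm{rank}(R)-1\leq d-1$.
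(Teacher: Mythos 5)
Your proof is correct, and its skeleton matches the paper's: both arguments use Galois-closure of the multiset of formal codegrees to conclude that the second codegree is a positive rational integer, and both then feed the two values into Equation~(\ref{codegrees}). The finishing moves differ, though. The paper (writing $f=\mathrm{FPdim}(R)$ and $g$ for the other codegree, with $h$ the sum of the remaining dimensions) notes $h\leq g-1$ and observes that $g<f$ forces $1/f+h/g<1/g+(g-1)/g=1$; this is an inequality argument that silently relies on $\mathrm{FPdim}(R)$ being the \emph{largest} formal codegree, since the case $g>f$ is never addressed in the printed proof. You instead extract the divisibility $d\mid f$ (in your notation) from $dS=f(d-1)$, which gives $f\geq d$, and close the pincer with the Perron--Frobenius maximality $f\leq d$. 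This makes explicit the maximality fact that the paper leaves implicit, and your alternative ending --- playing $S\geq 2(d-1)$ against $S\leq\mathrm{rank}(R)-1\leq \mathrm{FPdim}(R)-1=d-1$ --- is a genuine bonus: it eliminates the appeal to maximality of the Perron codegree entirely, using only the elementary bound $\mathrm{rank}(R)\leq\mathrm{FPdim}(R)$, and so is more self-contained than the paper's own argument. In short, same strategy, but your version is the more complete of the two as written.
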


\begin{proof}
Assume that $R$ has exactly 2 distinct formal codegrees, $f:=\mathrm{FPdim}(R)\in\mathbb{Z}_{\geq1}$ and $g\in\mathbb{Z}_{\geq1}$, where integrality follows from the formal codegrees being closed under Galois conjugacy.  Then with $h$ being the sum of dimensions of irreducible representations of $R$ distinct from $\mathrm{FPdim}$, we have $1/f+h/g=1$ by Equation (\ref{codegrees}).  Therefore $h/g\leq(g-1)/g$.  If $g<f$, $1/f+h/g<1$.  Moreover no such fusion ring exists.
\end{proof}

\begin{proposition}\label{comcor}
Let $(R,B)$ be a fusion ring and $\phi\in\mathrm{Aut}(R,B)$ be fixed-point-free of prime order.  If $R$ is noncommutative, then $\mathrm{rank}(R)\geq11$. 
\end{proposition}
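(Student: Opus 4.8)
The plan is to bound the rank from below through the identity $\mathrm{rank}(R)=\sum_{\varphi\in\mathrm{Irr}(R)}\dim(\varphi)^2$, which holds because $R\otimes_{\mathbb{Z}}\mathbb{C}$ is semisimple and, as recalled above, the formal codegrees arise as eigenvalues of multiplication by $\alpha$ with multiplicity $\dim(\varphi)^2$, so the multiplicities sum to $\dim_{\mathbb{C}}(R\otimes_{\mathbb{Z}}\mathbb{C})=\mathrm{rank}(R)$. Since $R$ is noncommutative, so is $R\otimes_{\mathbb{Z}}\mathbb{C}\cong\prod_\varphi\mathrm{End}(V_\varphi)$, and hence some $\varphi\in\mathrm{Irr}(R)$ has $\dim(\varphi)=k\geq2$. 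By Theorem \ref{thm1}(3) the $k$-dimensional irreducibles occur in full $\phi$-orbits of size $p$, so there are at least $p$ of them; together with the trivial representation $\mathrm{FPdim}$ this gives $\mathrm{rank}(R)\geq 1+p\,k^2\geq 1+4p$. For every odd prime this already yields $\mathrm{rank}(R)\geq 13>11$, so the entire difficulty is concentrated in the case $p=2$.

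For $p=2$ the bound only gives $\mathrm{rank}(R)\geq 9$, and Theorem \ref{thm1}(2) forces $\mathrm{rank}(R)$ to be odd, so it suffices to exclude $\mathrm{rank}(R)=9$. First I would run the elementary count: writing $a$ for the number of one-dimensional irreducibles (odd by Theorem \ref{thm1}(2)) and $n_k$ for the even number of $k$-dimensional ones (Theorem \ref{thm1}(3)), the equation $9=a+\sum_{k\geq2}n_k k^2$ with $\sum_{k\geq2}n_k>0$ has the unique solution $a=1$, $n_2=2$, since the cheapest positive even contribution is $n_2=2$ costing $8$ while any $n_k$ with $k\geq3$ costs at least $18$. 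Thus $\mathrm{Irr}(R)=\{\mathrm{FPdim},\varphi_1,\varphi_2\}$ with $\dim(\varphi_1)=\dim(\varphi_2)=2$, and $\{\varphi_1,\varphi_2\}$ forms a single $\phi$-orbit.

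Finally I would contradict this configuration using formal codegrees. By Theorem \ref{thm1}(4) the orbit satisfies $f_{\varphi_1}=f_{\varphi_2}=:g$, so $R$ has at most the two distinct formal codegrees $f:=\mathrm{FPdim}(R)$ and $g$. Because $R$ is noncommutative of rank $9$ it cannot be pointed (a pointed ring of rank $9$ is $\mathbb{Z}G$ with $|G|=9$, hence $G$ abelian and $R$ commutative), so some basis element has $\mathrm{FPdim}>1$ and therefore $\mathrm{FPdim}(R)=\sum_{x\in B}\mathrm{FPdim}(x)^2>9$. If $g=f$ there is a single formal codegree, which must be $\mathrm{FPdim}(R)$, and Equation (\ref{codegrees}) collapses to $\sum_\varphi\dim(\varphi)=\mathrm{FPdim}(R)$, i.e.\ $\mathrm{FPdim}(R)=5$, contradicting $\mathrm{FPdim}(R)>9$. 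Otherwise $g\neq f$, so $R$ has exactly two distinct formal codegrees with $\mathrm{FPdim}(R)\in\mathbb{Z}_{\geq1}$ (Theorem \ref{thm1}(1)) of multiplicity one, which is impossible by Lemma \ref{lem:meh}. Either way $\mathrm{rank}(R)=9$ is excluded, giving $\mathrm{rank}(R)\geq11$. The hard part is exactly this last step: identifying the forced rank-$9$ spectrum and verifying it meets the hypotheses of Lemma \ref{lem:meh}, in particular disposing of the degenerate single-codegree case via $\mathrm{FPdim}(R)>\mathrm{rank}(R)$.
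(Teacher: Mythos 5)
Your proof is correct and follows essentially the same route as the paper's: both use the constraints of Theorem \ref{thm1}(2)--(4) on the multiset of irreducible dimensions to reduce the sub-$11$ possibilities to the single configuration $p=2$, $\mathrm{rank}(R)=9$ with dimensions $1,2,2$, and then rule that configuration out via Lemma \ref{lem:meh} because the two $2$-dimensional representations share a formal codegree. If anything, your write-up is slightly more careful than the paper's at two points: you replace the paper's ``finite computation by hand'' with the explicit bound $\mathrm{rank}(R)\geq1+4p\geq13$ for odd $p$, and you explicitly dispose of the degenerate case where the shared codegree equals $\mathrm{FPdim}(R)$ (forcing $\mathrm{FPdim}(R)=5$ against $\mathrm{FPdim}(R)>9$), a case the paper's direct appeal to Lemma \ref{lem:meh} silently assumes away.
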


\begin{proof}
This proof is a brief computation by Theorem \ref{thm1}.  In particular if $r:=\mathrm{rank}(R)$, we seek to create a list of dimensions of irreducible representations $d_1,\ldots,d_\ell$, not all equal to $1$, such that $r=\sum_{j=1}^\ell d_j^2$.  Specifically, let $n_k\in\mathbb{Z}_{\geq0}$ be the number of $\varphi\in\mathrm{Irr}(R)$ with $\dim(\varphi)=k\in\mathbb{Z}_{\geq1}$.  There must exist a prime $p\in\mathbb{Z}_{\geq2}$ such that $r\equiv1\pmod{p}$ and $n_k\equiv0\pmod{p}$.  All $d_j$, $p$, $k$, and $n_k$ are bounded by $r$, so this is a finite computation which can be done by hand.  One feasible solution exists with $r<11$ which is the case $p=2$ and $r=9$ with $d_1=1$, $d_2=d_3=2$.  But this violates Lemma \ref{lem:meh} since the two-dimensional irreducible representations share the same formal codegree by Theorem \ref{thm1}(4).
\end{proof}

Our last result of the section notes that for fusion rings of small rank, which are predominately commutative, many of the formal codegrees of $R$ are determined by the group of invertible elements of $R$, since the action of of the invertible elements on the noninvertible basis elements is often trivial.

\begin{lemma}\label{formlem}
Let $(R,B)$ be a commutative fusion ring with $R\neq R_\mathrm{pt}$ such that $G$ the group of invertible objects of $R$ acts trivially on the set of noninvertible basis elements.  Then $R$ has $|G|$ as a formal codegree of multiplicity exactly $|G|-1$.
\end{lemma}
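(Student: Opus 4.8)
The plan is to exploit that commutativity makes every $\varphi\in\mathrm{Irr}(R)$ a one-dimensional ring homomorphism $\varphi\colon R\to\mathbb{C}$ and makes $G$ a finite abelian group, so that I can compute formal codegrees directly from character values. For such $\varphi$ we have $\dim(\varphi)=1$, hence $f_\varphi=\varphi(\alpha)=\sum_{b\in B}\varphi(b)\varphi(b^\ast)$. Since the fusion matrices $\{N_b\}$ are real and span a commutative $\ast$-algebra (with $N_{b^\ast}=N_b^{\mathsf T}=N_b^\ast$), the character $\varphi$ is automatically a $\ast$-homomorphism, giving $\varphi(b^\ast)=\overline{\varphi(b)}$ and therefore $f_\varphi=\sum_{b\in B}|\varphi(b)|^2$, a positive real. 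I will then organize $\mathrm{Irr}(R)$ by the restriction $\varphi|_G\in\widehat{G}$ and show that the value $|G|$ is attained exactly on the characters whose restriction is nontrivial.

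The engine of the argument is the trivial-action hypothesis. For invertible $g$ and noninvertible $b$ it says $gb=b$, so applying $\varphi$ yields $(\varphi(g)-1)\varphi(b)=0$; hence for each $\varphi$ either $\varphi|_G$ is trivial or $\varphi$ vanishes on \emph{every} noninvertible basis element. I will also record the key structural identity: for noninvertible $x,y$ and invertible $g$, the cyclic invariance of Equation~(\ref{cyclicinv}) gives $c_{x,y}^{g}=c_{y,g^\ast}^{x^\ast}$, and since $yg^\ast=y$ by triviality of the action, this equals $\delta_{x^\ast,y}$, independent of $g$. In particular the invertible part of $xx^\ast$ is all of $G$ with multiplicity one, consistent with the stabilizer description $G_x=G$.

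For the lower bound I will, for each nontrivial $\chi\in\widehat{G}$, define $\varphi_\chi$ by $\varphi_\chi|_G=\chi$ and $\varphi_\chi(b)=0$ for noninvertible $b$, and verify it is a ring homomorphism: the only nonobvious case is a product $xy$ of noninvertibles, where $\varphi_\chi(xy)=\sum_{g\in G}c_{x,y}^{g}\chi(g)=\delta_{x^\ast,y}\sum_{g\in G}\chi(g)=0=\varphi_\chi(x)\varphi_\chi(y)$, using the identity above and $\sum_{g\in G}\chi(g)=0$ for $\chi\neq 1$. Each such $\varphi_\chi$ is a genuine character with $f_{\varphi_\chi}=\sum_{g\in G}|\chi(g)|^2=|G|$, and distinct $\chi$ give distinct $\varphi_\chi$, so $|G|$ occurs with multiplicity at least $|G|-1$.

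The main obstacle is the matching upper bound, and this is where the positivity $f_\varphi=\sum_b|\varphi(b)|^2$ does the work. Every remaining character has $\varphi|_G$ trivial, so $f_\varphi=|G|+\sum_{b\ \mathrm{noninv}}|\varphi(b)|^2\geq|G|$, with equality precisely when $\varphi$ vanishes on all noninvertibles. I will rule out the equality case: such a $\varphi$ would be the function that is $1$ on $G$ and $0$ elsewhere, but for a noninvertible $x$ (which exists since $R\neq R_{\mathrm{pt}}$) the identity $c_{x,x^\ast}^{g}=\delta_{x^\ast,x^\ast}=1$ forces $\varphi(xx^\ast)=\sum_{g\in G}c_{x,x^\ast}^{g}=|G|\neq 0=\varphi(x)\varphi(x^\ast)$, contradicting multiplicativity. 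Hence every character with trivial restriction satisfies $f_\varphi>|G|$ strictly, the characters attaining $|G|$ are exactly the $\varphi_\chi$ for $\chi\in\widehat{G}\setminus\{1\}$, and $|G|$ is a formal codegree of multiplicity exactly $|G|-1$.
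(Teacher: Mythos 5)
Your proof is correct and follows essentially the same route as the paper's: extend each nontrivial character of $G$ by zero on the noninvertible basis elements (verified via the cyclic-invariance identity $c_{x,y}^g=\delta_{x^\ast,y}$), then show every character satisfies $f_\varphi=\sum_{b\in B}|\varphi(b)|^2\geq|G|$ with equality exactly for these extensions, using the $xx^\ast$ computation to exclude the character that is $1$ on $G$ and $0$ elsewhere. Your explicit dichotomy from the trivial-action hypothesis and the $\ast$-homomorphism property $\varphi(b^\ast)=\overline{\varphi(b)}$ are just welcome elaborations of steps the paper leaves implicit.
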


\begin{proof}
Let $\varphi\in\mathrm{Irr}(G)$ be nontrivial.  We can extend $\varphi$ to a ring homomorphism $\tilde{\varphi}:R\to\mathbb{C}$ by assigning $\tilde{\varphi}(x)=0$ for all noninvertible $x\in B$.  Note that for noninvertible $x,y\in B$, $xy$ either contains all invertible elements as summands in the case $y=x^\ast$, or none, by the cyclic invariance of the fusion rules (Equation (\ref{cyclicinv})).  Therefore the only nontrivial verification that $\tilde{\varphi}$ is a well-defined homomorphism is
\begin{equation}\label{ate}
\tilde{\varphi}(xx^\ast)=\sum_{g\in G}\tilde{\varphi}(g)+\sum_{y\in B\setminus G}\tilde{\varphi}(y)=\sum_{g\in G}\varphi(g),
\end{equation}
vanishes, which follows since $\varphi\in\mathrm{Irr}(G)$ is nontrivial.  Conversely, if $\varphi:R\to\mathbb{C}$ is a ring homomorphism and $\varphi(x)=0$ for all noninvertible basis elements $x$, then the left-hand side of Equation (\ref{ate}) vanishes, thus $\varphi|_G\in\mathrm{Irr}(G)$ is nontrivial.  Now if $\varphi:R\to\mathbb{C}$ is any ring homomorphism, then $f_\varphi\geq|G|$ by \cite[Example 2.4]{codegrees} since $\varphi(x)$ is a root of unity for all invertible $x\in B$, with equality if and only if $\varphi(x)=0$ for all noninvertible $x\in B$.  Moreover $|G|$ is a formal codegree of multiplicity exactly $|G|-1$.
\end{proof}



\section{When $p=2$}\label{sectiontoo}

\par In the case when $p$ is the unique even prime, more can be said about fixed-point-free $\phi\in\mathrm{Aut}(R,B)$.  For example, when $R\cong\mathbb{Z}G$ for a finite group $G$, then the existence of such $\phi$ implies $G$ is an abelian group of odd order, and $\phi$ is the inversion map $g\mapsto g^{-1}$.

\subsection{Relation with duality}

\begin{proposition}\label{propcom}
Let $(R,B)$ be a fusion ring and $\phi$ be an antiautomorphism of $R$ of order 2.  Then $\phi$ is fixed-point-free if and only if $\phi(x)=x^\ast\neq x$ for all $x\in B$.
\end{proposition}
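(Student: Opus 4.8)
The backward direction is immediate: if $\phi(x)=x^\ast\neq x$ for every nontrivial $x\in B$, then the only basis element $\phi$ can fix is $1_R$, so $\phi$ is fixed-point-free. The content is the forward direction, and the plan is to reduce it to a statement about an associated honest automorphism. First I would record that an order-$2$ antiautomorphism still satisfies $\phi(x^\ast)=\phi(x)^\ast$: applying $\phi$ to $xx^\ast$ yields $\phi(x^\ast)\phi(x)$, whose coefficient of $1_R$ equals the coefficient of $\phi(1_R)=1_R$ in $\phi(xx^\ast)$, namely $1$, forcing $\phi(x)^\ast=\phi(x^\ast)$ exactly as in the $\mathrm{Aut}(R,B)$ case. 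Consequently $\phi$ commutes with duality, and the composite $\psi:=\ast\circ\phi$ is a genuine unital ring automorphism permuting $B$, with $\psi^2=\ast\phi\ast\phi=\ast\ast\phi\phi=\mathrm{id}$. Thus $\psi\in\mathrm{Aut}(R,B)$ has order $1$ or $2$.

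The key translations follow from $\phi(x)=\psi(x)^\ast$. One has $\phi(x)=x^\ast$ precisely when $\psi(x)=x$, so the desired conclusion $\phi=\ast$ is equivalent to $\psi=\mathrm{id}$; and once $\phi=\ast$ is known, fixed-point-freeness of $\phi$ says exactly that no nontrivial basis element is self-dual, which supplies the ``$\neq x$'' clause for free. Dually, $\phi(x)=x$ happens precisely when $\psi(x)=x^\ast$, so the hypothesis that $\phi$ is fixed-point-free is equivalent to the assertion that the automorphism $\psi$ agrees with duality only at $1_R$. The whole proposition therefore reduces to: an order-$2$ automorphism $\psi\in\mathrm{Aut}(R,B)$ that agrees with $\ast$ only at the identity must be the identity.

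To attack this I would pass to the complexification $\mathbb{C}^B$, on which $\ast$ and $\psi$ act as commuting involutions, so that $\mathbb{C}^B$ splits into their four common $\pm1$-eigenspaces $V_{\epsilon\delta}$. The permutation $\phi=\ast\psi$ acts as $\epsilon\delta$ on $V_{\epsilon\delta}$, and fixed-point-freeness pins down its global shape: as an involution of $B$ fixing only $1_R$ it is a product of $(\mathrm{rank}(R)-1)/2$ transpositions. Proving $\psi=\mathrm{id}$ then amounts to showing the two eigenspaces on which $\psi$ acts by $-1$ vanish. As a sanity check and guide, in the motivating case $R=\mathbb{Z}\Gamma$ with $\Gamma$ abelian and $\ast$ inversion, the hypothesis says the order-$2$ automorphism $\psi$ of $\Gamma$ satisfies $\psi(g)=g^{-1}$ only for $g=1$; writing $\Gamma$ additively this means $\psi+\mathrm{id}$ is injective, so $-1$ is not an eigenvalue of $\psi$, and since $\psi^2=\mathrm{id}$ with $2$ invertible this forces $\psi=\mathrm{id}$.

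The hard part, and the step I expect to be the main obstacle, is carrying out this last deduction in the absence of any group structure on $B$. Pure fixed-point bookkeeping cannot suffice: for abstract commuting involutions on a finite set the implication fails (a free Klein-four orbit together with a single common fixed point ``agrees only at the basepoint'' yet has $\psi\neq\mathrm{id}$), so a genuinely arithmetic input is needed to break the symmetry between $B$ and $\mathrm{Irr}(R)$. I would import it from the spectral side: the antiautomorphism analogue of Proposition \ref{jan}---whose proof uses only $\chi_\varphi\circ\phi=\chi_\varphi$ together with the orthogonality relation (\ref{orthochar}), and so applies verbatim once $\phi$ acts on $\mathrm{Irr}(R)$ by $\chi_{\varphi^\phi}=\chi_\varphi\circ\phi$---shows $\mathrm{FPdim}$ is the unique $\phi$-fixed point of $\mathrm{Irr}(R)$ and, via the Galois argument of Theorem \ref{thm1}, forces $R$ to be integral with $\mathrm{FPdim}(R)$ odd. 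Feeding this, the invariance $f_\varphi=f_{\varphi\circ\psi}$, and the codegree constraint (\ref{codegrees}) into the eigenspace decomposition should eliminate the offending $\psi=-1$ eigenspaces by a Perron--Frobenius positivity argument, yielding $\psi=\mathrm{id}$ and hence $\phi=\ast$.
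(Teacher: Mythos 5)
Your backward direction and your reformulation are both correct: $\psi:=\ast\circ\phi$ is indeed a unital automorphism in $\mathrm{Aut}(R,B)$ with $\psi^2=\mathrm{id}$, and the proposition is equivalent to the claim that such an involutive $\psi$ satisfying $\psi(x)\neq x^\ast$ for all nontrivial $x\in B$ must be the identity. But this is a reformulation, not a reduction in difficulty: the new claim is exactly as strong as the original, and your final paragraph does not prove it. Asserting that the codegree identity (\ref{codegrees}), the invariance $f_\varphi=f_{\varphi\circ\psi}$, and ``a Perron--Frobenius positivity argument'' \emph{should} kill the $(-1)$-eigenspaces of $\psi$ is not an argument; Perron--Frobenius positivity only singles out $\mathrm{FPdim}$ as the unique fixed character (your antiautomorphism analogue of Proposition \ref{jan}, which is fine), and nothing in that statement prevents an involution $\psi\neq\mathrm{id}$ from permuting $B$ nontrivially, i.e.\ from having a nonzero $(-1)$-eigenspace. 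You flag this step yourself as ``the main obstacle,'' so what you have is a correct restatement of the problem plus a conjecture about how to finish it.

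The missing arithmetic input is supplied in the paper by two specific elements that appear nowhere in your proposal. First, for any $x$ with $\phi(x)\neq x^\ast$, the element $x\phi(x)$ is $\phi$-invariant (because $\phi$ is an antiautomorphism of order $2$) and contains no copy of $1_R$ (since $c_{x,\phi(x)}^{1_R}=1$ would mean $\phi(x)=x^\ast$); as fixed-point-freeness forces every nontrivial $\phi$-orbit to have size $2$, this yields that $\mathrm{FPdim}(x)^2=\mathrm{FPdim}(x\phi(x))$ is twice an algebraic integer. Second, one expands $xx^\ast$, whose nontrivial summands are closed under duality with $c_{x,x^\ast}^y=c_{x,x^\ast}^{y^\ast}$, and sorts them into those with $\phi(y)=y^\ast$ (for these $y\neq y^\ast$, so they contribute in dual pairs, hence evenly) and those with $\phi(y)\neq y^\ast$ (for these $2\mid\mathrm{FPdim}(y)^2$ by the first step). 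Rearranging the resulting expression for $\mathrm{FPdim}(x)^2$ exhibits $1/2$ as an algebraic integer, a contradiction, so $\phi(x)=x^\ast$ for every $x\in B$. Your idea of first proving integrality of $R$ via the antiautomorphism version of Proposition \ref{jan} is sound and could even be used to turn this second step into a plain parity count in $\mathbb{Z}$; but without introducing $x\phi(x)$ and $xx^\ast$ (or some equivalent divisibility mechanism) your outline has no way to conclude, and the proposal as written has a genuine gap precisely at the heart of the proof.
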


\begin{proof}
The converse half of the statement is trivial, so assume $\phi$ is fixed-point-free.  For all $x\in B$, $x\phi(x)$ is $\phi$-invariant, and by definition $\phi(x)=x^\ast$ if and only if $c^{1_R}_{x,\phi(x)}=1$.  Let $\Gamma_0\subset B$ be a set of representatives of $\phi$-orbits of $B$ which appear as summands of $x\phi(x)$.  If $\phi(x)\neq x^\ast$, then since $\phi$ preserves Frobenius-Perron dimensions \cite[Proposition 3.3.9]{tcat},
\begin{equation}
\mathrm{FPdim}(x)^2=\mathrm{FPdim}(x\phi(x))=2\sum_{y\in\Gamma_0}c_{x,\phi(x)}^y\mathrm{FPdim}(y),
\end{equation}
hence $2\mid\mathrm{FPdim}(x)^2$ for any $x\in B$ with $\phi(x)\neq x^\ast$.  Now consider $xx^\ast$, which is a self-dual object of $R$, hence its summands are closed under duality.  Let $\Gamma_1\subset B$ be all of the summands $y$ of $xx^\ast$ for which $\phi(y)\neq y^\ast$, and let $\Gamma_2\subset B$ be a set of representatives of the $\phi$-orbits of all nontrivial summands $y$ of $xx^\ast$ for which $\phi(y)=y^\ast$.  It is evident from their definition that $\Gamma_1$ and $\Gamma_2$ are disjoint and contain all nontrivial summands of $xx^\ast$.  We compute
\begin{align}
\mathrm{FPdim}(xx^\ast)=1+\sum_{y\in\Gamma_1}c_{x,x^\ast}^y\mathrm{FPdim}(y)^2+2\sum_{z\in \Gamma_2}c_{x,x^\ast}^z\mathrm{FPdim}(z)^2,
\end{align}
and therefore upon rearrangement,
\begin{equation}
\dfrac{1}{2}=\dfrac{1}{2}\left(\mathrm{FPdim}(x)^2-\sum_{y\in\Gamma_1}c_{x,x^\ast}^y\mathrm{FPdim}(y)^2\right)-\sum_{z\in \Gamma_2}c_{x,x^\ast}^z\mathrm{FPdim}(z)^2
\end{equation}
is an algebraic integer, which is false.  Therefore $\phi(x)=x^\ast$ for all $x\in B$.
\end{proof}

\begin{corollary}\label{cor:com}
Let $(R,B)$ be a commutative fusion ring and $\phi\in\mathrm{Aut}(R,B)$ of order 2.  If $\phi$ is fixed-point-free, then $\phi(x)=x^\ast$ for all $x\in B$.
\end{corollary}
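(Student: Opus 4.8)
The plan is to deduce this corollary directly from Proposition \ref{propcom}, which already handles the case of a fixed-point-free \emph{anti}automorphism of order 2. The bridge is the elementary observation that in a commutative ring the notions of automorphism and antiautomorphism coincide, so that the hypotheses of Proposition \ref{propcom} are automatically met.

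First I would verify that $\phi$ qualifies as an antiautomorphism. Since $R$ is commutative, for any $x,y\in R$ we have $\phi(x)\phi(y)=\phi(y)\phi(x)$, so the automorphism identity $\phi(xy)=\phi(x)\phi(y)$ is literally the same statement as the antiautomorphism identity $\phi(xy)=\phi(y)\phi(x)$. Thus $\phi\in\mathrm{Aut}(R,B)$ is simultaneously a basis-permuting antiautomorphism, and it retains both order $2$ and the fixed-point-free property by assumption. In particular the setup of Proposition \ref{propcom}, which treats $\phi$ as acting on the basis $B$ with $\phi$-orbits, is faithfully reproduced here.

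Having matched the hypotheses, I would simply invoke Proposition \ref{propcom} to conclude that $\phi(x)=x^\ast$ for all $x\in B$ (with the additional information $\phi(x)\neq x$ for nontrivial $x$), which is exactly the desired assertion. I do not expect any genuine obstacle: the entire analytic content — the algebraic-integrality contradiction that rules out the possibility $\phi(x)\neq x^\ast$ — has already been absorbed into the proof of Proposition \ref{propcom}, and commutativity enters only to license the use of that proposition. The sole point requiring care is the purely bookkeeping one of confirming that passing from ``automorphism'' to ``antiautomorphism'' preserves the order-$2$ and fixed-point-free conditions, which it does verbatim.
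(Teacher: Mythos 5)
Your proposal is correct and is exactly the paper's own argument: the paper's proof of Corollary \ref{cor:com} consists of the single observation that automorphisms and antiautomorphisms coincide for commutative $R$, after which Proposition \ref{propcom} gives the conclusion. Your write-up simply spells out this identification in more detail, so there is nothing to add or fix.
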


\begin{proof}
The set of antiautomorphisms and automorphisms are the same when $R$ is commutative. 
\end{proof}

\begin{proposition}\label{threethree}
Let $(R,B)$ be a fusion ring and $f\in\mathrm{Aut}(R,B)$ be fixed-point-free of order 2.  For all $x\in B$, if $x\phi(x)=\phi(x)x$, then $\phi(x)=x^\ast$.
\end{proposition}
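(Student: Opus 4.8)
The plan is to argue by contradiction, assuming $\phi(x)\neq x^\ast$, and to use the hypothesis $x\phi(x)=\phi(x)x$ in exactly the way the antiautomorphism property is used in Proposition \ref{propcom}. Since $\phi$ is an automorphism of order $2$, we have $\phi(x\phi(x))=\phi(x)\phi^2(x)=\phi(x)x$, so the commutativity hypothesis is precisely the assertion that $w:=x\phi(x)$ is $\phi$-invariant; this is the one place where $\phi$-invariance of $x\phi(x)$ was automatic in the antiautomorphism setting. Recalling that $c_{x,\phi(x)}^{1_R}=\delta_{x,\phi(x)^\ast}$, the assumption $\phi(x)\neq x^\ast$ says exactly that $1_R$ does not occur in $w$. (For commutative $R$ the statement is immediate from Corollary \ref{cor:com}, so the content is the noncommutative case.)

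First I would extract a parity statement. Because $\phi$ is fixed-point-free of prime order $2$, Theorem \ref{thm1}(1) gives $\mathrm{FPdim}(y)\in\mathbb{Z}$ for all $y\in B$, and each nontrivial $\phi$-orbit has size $2$ with constant $w$-multiplicity and constant Frobenius-Perron dimension. Collecting the summands of $w$ by $\phi$-orbit and applying $\mathrm{FPdim}$ yields $\mathrm{FPdim}(x)^2=\mathrm{FPdim}(w)=c_{x,\phi(x)}^{1_R}+2\sum_z c_{x,\phi(x)}^z\mathrm{FPdim}(z)$, the sum over representatives of the nontrivial orbits, so that $c_{x,\phi(x)}^{1_R}\equiv\mathrm{FPdim}(x)^2\pmod 2$. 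Under the contradiction hypothesis the left side is $0$, forcing $\mathrm{FPdim}(x)$ to be even; this is the local analogue of Proposition \ref{evenprop} applied to the $\phi$-invariant product $w$.

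To finish I would pass to the self-dual element $xx^\ast$, which contains $1_R$ with multiplicity exactly $1$ and has $\mathrm{FPdim}(xx^\ast)=\mathrm{FPdim}(x)^2$. If $xx^\ast$ were $\phi$-invariant, the same orbit-collecting argument would give $\mathrm{FPdim}(x)^2\equiv 1\pmod 2$, contradicting the previous paragraph and completing the proof. The main obstacle is precisely that $xx^\ast$ need \emph{not} be $\phi$-invariant: its invertible summands record the stabilizer $G_x$, and $\phi(G_x)=G_{\phi(x)}$ may differ from $G_x$. The available structure is that $\phi$ permutes the self-dual basis elements with no nontrivial fixed point — if $y=y^\ast$ then $\phi(y)=\phi(y)^\ast$ and $\phi(y)\neq y$, hence $\phi(y)\neq y^\ast$ — so the nontrivial self-dual elements fall into $\phi$-pairs of equal Frobenius-Perron dimension. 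The crux is therefore to show, using the commutativity of $x$ and $\phi(x)$, that these self-dual elements enter $xx^\ast$ with $\phi$-matched multiplicities, i.e.\ to control the failure of $\phi$-invariance of $xx^\ast$ on its self-dual part, so that their total contribution to $\mathrm{FPdim}(x)^2$ is even and the parity contradiction survives. A natural route to this last step is to replace $\phi$ by the order-two antiautomorphism $\ast\circ\phi=\phi\circ\ast$ and adapt the self-dual bookkeeping (the sets $\Gamma_1,\Gamma_2$) from the proof of Proposition \ref{propcom}.
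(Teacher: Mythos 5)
Your first two paragraphs reproduce the paper's own proof: the paper disposes of Proposition \ref{threethree} in one line, by observing that the hypothesis $x\phi(x)=\phi(x)x$ makes $x\phi(x)$ a $\phi$-invariant element and then citing the proof of Proposition \ref{propcom}, and your parity computation (if $c_{x,\phi(x)}^{1_R}=0$ then $\mathrm{FPdim}(x)$ is even, using integrality from Theorem \ref{thm1}(1)) is precisely the first half of that proof. The genuine gap is your third paragraph: the step you call the crux is left unproved, and moreover you have misidentified what that step needs to be. The second half of the proof of Proposition \ref{propcom} never uses $\phi$-invariance of $xx^\ast$ and never matches multiplicities along $\phi$-orbits; it uses only the self-duality of $xx^\ast$, which holds in every fusion ring, together with the first-half parity statement applied to \emph{other} basis elements. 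Concretely: every nontrivial summand $z$ of $xx^\ast$ with $\phi(z)=z^\ast$ pairs with $z^\ast\neq z$, and $c_{x,x^\ast}^z=c_{x,x^\ast}^{z^\ast}$ because $xx^\ast$ is self-dual, so these summands contribute an even integer to $\mathrm{FPdim}(x)^2$; every remaining summand $y$ satisfies $\phi(y)\neq y^\ast$ (this includes every self-dual summand, as you noted), and for such $y$ the first-half argument applied to $y$ itself gives $2\mid\mathrm{FPdim}(y)$, so these contribute evenly as well (dual pairs among them contribute evenly in any case). Hence $\mathrm{FPdim}(x)^2=1+(\text{an even integer})$ is odd, contradicting the evenness of $\mathrm{FPdim}(x)$ from your second paragraph. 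Your suggested fallback does not fill the hole: applying Proposition \ref{propcom} to the order-two antiautomorphism $\ast\circ\phi$ shows only that it cannot be fixed-point-free (if it were, it would equal $\ast$, forcing $\phi=\mathrm{id}$), i.e.\ that \emph{some} nontrivial $y$ satisfies $\phi(y)=y^\ast$ --- not that the given $x$ does.

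One caveat, and it is where your unease about $xx^\ast$ should be redirected: running the parity argument on a summand $y$ requires $y\phi(y)$ to be $\phi$-invariant, i.e.\ $y\phi(y)=\phi(y)y$. In Proposition \ref{propcom} this is automatic because $\phi$ there is an antiautomorphism; in the present proposition it is an instance of the commutation hypothesis, now invoked for the summands of $xx^\ast$ rather than for $x$ alone, and this is the sense in which the paper's ``identical'' transfer should be read. In any case, what is missing from your write-up is not control over the failure of $\phi$-invariance of $xx^\ast$, but exactly the $\Gamma_1$/$\Gamma_2$ bookkeeping just described.
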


\begin{proof}
The proof is identical to that of Proposition \ref{propcom} since the element $x\phi(x)$ is $\phi$-invariant under the assumption $x\phi(x)=\phi(x)x$.
\end{proof}

\begin{example} Basis elements of fusion rings need not commute with their duals.  If they did, then Proposition \ref{threethree} would immediately imply the only fixed-point-free involution is duality.  This still may be true but the reason must be slightly deeper.  For example, one of the fusion rings associated to the extended Haagerup subfactor \cite[Figure 8]{MR2979509} possesses a basis element (labeled ``$B$'' in loc.\ cit.) with the following fusion rule, which does not commute with its dual.

\begin{equation}
\left[\begin{array}{cccccccc}
0&0&0&0&0&0&1&0 \\
0&0&0&0&0&1&0&0 \\
0&0&0&1&1&0&0&0 \\
0&0&1&1&1&1&0&1 \\
0&1&0&1&1&1&1&0 \\
0&0&1&1&1&1&0&0 \\
0&0&0&1&0&0&0&0 \\
1&0&0&0&0&1&0&0
\end{array}\right]
\end{equation}
\end{example}


\subsection{Representation theory and formal codegrees}

\begin{proposition}\label{propcomp}
Let $(R,B)$ be a commutative fusion ring and $\phi\in\mathrm{Aut}(R,B)$ which is fixed-point-free of order 2.  For all $\varphi\in\mathrm{Irr}(R)$, if $\varphi\neq\mathrm{FPdim}$, then $\varphi(x)\not\in\mathbb{R}$ for some $x\in B$.
\end{proposition}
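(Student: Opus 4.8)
The plan is to prove the contrapositive and then quote Proposition \ref{jan}. Concretely, I would show that if $\varphi\in\mathrm{Irr}(R)$ satisfies $\varphi(x)\in\mathbb{R}$ for all $x\in B$, then necessarily $\varphi=\mathrm{FPdim}$. Two preliminary observations set the stage. First, since $R$ is commutative, $R\otimes\mathbb{C}$ is a finite-dimensional commutative $\mathbb{C}$-algebra, so every $\varphi\in\mathrm{Irr}(R)$ is one-dimensional, i.e.\ a unital ring homomorphism $\varphi:R\to\mathbb{C}$. Second, by Corollary \ref{cor:com} the standing hypothesis forces $\phi$ to be the duality map $x\mapsto x^\ast$, so the action of $\phi$ on $\mathrm{Irr}(R)$ sends $\varphi$ to $x\mapsto\varphi(x^\ast)$.

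The key ingredient I would establish first is that every character of a commutative fusion ring intertwines duality and complex conjugation, namely $\varphi(x^\ast)=\overline{\varphi(x)}$ for all $x\in B$. To see this, write $N_x$ for the matrix of left multiplication by $x$ in the basis $B$. Commutativity of $R$ gives a commuting family $\{N_x\}_{x\in B}$, and the cyclic invariance of the fusion rules (Equation (\ref{cyclicinv})) yields $N_{x^\ast}=N_x^{\mathrm{T}}$. Hence each $N_x$ satisfies $N_xN_x^{\mathrm{T}}=N_x^{\mathrm{T}}N_x$ and is therefore normal, so the $N_x$ are simultaneously unitarily diagonalizable; each joint eigenvector $v_\varphi$, characterized by $N_xv_\varphi=\varphi(x)v_\varphi$, is then also an eigenvector of $N_x^\dagger=N_x^{\mathrm{T}}=N_{x^\ast}$ with the conjugate eigenvalue. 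Reading off $N_{x^\ast}v_\varphi=\overline{\varphi(x)}v_\varphi$ gives $\varphi(x^\ast)=\overline{\varphi(x)}$, which is exactly the relation already invoked implicitly for real-valued characters in the remark following Equation (\ref{orthochar}).

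With this relation in hand the conclusion is immediate. If $\varphi(x)\in\mathbb{R}$ for every $x\in B$, then $\varphi(x^\ast)=\overline{\varphi(x)}=\varphi(x)$, so $\varphi\circ\phi=\varphi$; that is, $\varphi$ is a fixed point of the action of $\phi=(\,\cdot\,)^\ast$ on $\mathrm{Irr}(R)$. Since $\phi$ is fixed-point-free of prime order $2$, Proposition \ref{jan} applies and its conclusion is precisely that $\mathrm{FPdim}$ is the unique such fixed point, whence $\varphi=\mathrm{FPdim}$. Taking contrapositives yields the proposition.

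I expect the only genuine content to lie in the normality step establishing $\varphi(x^\ast)=\overline{\varphi(x)}$; everything after it is a one-line application of Proposition \ref{jan}. If one prefers to avoid the simultaneous-diagonalization argument, this conjugation relation may instead be cited as a standard property of characters of (commutative) fusion rings, in which case the proposition reduces entirely to the observation that a real-valued character is duality-invariant together with Proposition \ref{jan}.
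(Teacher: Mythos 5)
Your proposal is correct and follows essentially the same route as the paper: the paper's (one-line) proof likewise notes that commutativity forces $\phi(x)=x^\ast$, that a real-valued character satisfies $\varphi(x)=\overline{\varphi(x)}=\varphi(x^\ast)$, and hence would be a fixed point of the $\phi$-action on $\mathrm{Irr}(R)$, contradicting Proposition \ref{jan}. The only difference is that you supply an explicit proof (via normality of the multiplication matrices $N_x$) of the conjugation identity $\varphi(x^\ast)=\overline{\varphi(x)}$, which the paper treats as a standard fact.
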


\begin{proof}
Commutativity of $R$ implies $\phi(x)=x^\ast\neq x$ for all $x\in B$, so if $\varphi(x)=\overline{\varphi(x)}=\varphi(x^\ast)$ for all $x\in B$, then $\varphi\neq\mathrm{FPdim}$ is a fixed-point of the action of $\phi$ on $\mathrm{Irr}(R)$, violating Proposition \ref{jan}.
\end{proof}

\begin{proposition}\label{prop1}
Let $(R,B)$ be a commutative fusion ring and $\phi\in\mathrm{Aut}(R,B)$ have order 2.  If $\phi$ is fixed-point-free, then $2$ does not divide $f_\varphi$ for any $\varphi\in\mathrm{Irr}(R)$.
\end{proposition}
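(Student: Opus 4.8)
The plan is to compute the formal codegree $f_\varphi$ directly from the central element $\alpha=\sum_{b\in B}bb^\ast$ and to exploit the pairing of basis elements forced by fixed-point-freeness. Since $R$ is commutative, every $\varphi\in\mathrm{Irr}(R)$ is one-dimensional, so the defining relation $\varphi(\alpha)=\dim(\varphi)f_\varphi\,\mathrm{id}_V$ collapses to $f_\varphi=\varphi(\alpha)=\sum_{b\in B}\varphi(b)\varphi(b^\ast)$. By Corollary \ref{cor:com}, the hypotheses force $\phi$ to be the duality map, so $x\neq x^\ast$ for every nontrivial $x\in B$; hence the nontrivial basis elements partition into dual pairs $\{x,x^\ast\}$ of size $2$.

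First I would fix a set $\Gamma\subset B$ of representatives of these pairs and rewrite the central element as $\alpha=1_R+\sum_{x\in\Gamma}\bigl(xx^\ast+x^\ast x\bigr)$. Commutativity of $R$ gives $xx^\ast=x^\ast x$, so that $\alpha=1_R+2\sum_{x\in\Gamma}xx^\ast$. Applying the homomorphism $\varphi$ and using $\varphi(1_R)=1$ then yields $f_\varphi=1+2\sum_{x\in\Gamma}\varphi(x)\varphi(x^\ast)$.

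The final step is the same algebraic-integer contradiction used elsewhere in the section. Each $\varphi(x)$ is an algebraic integer, being an eigenvalue of the nonnegative integer fusion matrix of $x$, so $S:=\sum_{x\in\Gamma}\varphi(x)\varphi(x^\ast)$ is an algebraic integer and $f_\varphi=1+2S$. If $2$ divided $f_\varphi$, say $f_\varphi=2\beta$ for an algebraic integer $\beta$, then $1/2=\beta-S$ would be an algebraic integer, which is false; hence $2\nmid f_\varphi$ for every $\varphi\in\mathrm{Irr}(R)$.

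There is essentially no hard step here: the content lies entirely in the bookkeeping of the $\phi$-orbits together with the invocation of commutativity to extract the factor of $2$ (so that the residue $1$ survives), and in the observation that one-dimensionality of $\varphi$ reduces the formal-codegree relation to a plain sum. The argument is the multiplicative analogue of the half-integer contradictions appearing in Propositions \ref{evenprop} and \ref{propcom}.
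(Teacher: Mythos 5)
Your proof is correct, and it starts exactly where the paper's proof does: both invoke Corollary \ref{cor:com} to identify $\phi$ with duality, pair the nontrivial basis elements into dual pairs, and arrive at the same decomposition $\alpha=1_R+2\sum_{x\in\Gamma}xx^\ast$. The difference lies in how the conclusion is extracted from this decomposition. The paper argues globally: the multiplication matrix $N$ of $\alpha$ is the identity plus a matrix of even integers, an induction on cofactor expansions shows $\det N$ is an odd rational integer, and since the eigenvalues of $N$ are exactly the formal codegrees, each with multiplicity one by commutativity \cite[Remark 2.11]{ost15}, an even formal codegree would make the odd integer $\det N$ equal to $2$ times an algebraic integer, which is impossible. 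You argue locally: commutativity makes every $\varphi\in\mathrm{Irr}(R)$ one-dimensional, so $f_\varphi=\varphi(\alpha)=1+2S$ with $S$ an algebraic integer, and the contradiction $1/2=\beta-S$ is immediate. Your route is more elementary---it needs neither the determinant induction nor the multiplicity-one fact from \cite{ost15}---and it yields the sharper per-representation congruence that $f_\varphi\equiv1$ modulo $2$ in the ring of algebraic integers, whereas the paper's argument naturally produces the global statement that the product of all the formal codegrees is an odd rational integer. Both arguments treat ``$2$ divides $f_\varphi$'' as divisibility in the algebraic integers, which is the appropriate reading since formal codegrees need not be rational in general.
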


\begin{proof}
Let $\alpha:=\sum_{x\in B}xx^\ast$.  The eigenvalues of the matrix of (left) multiplication by $\alpha$ are $f_\varphi$ for $\varphi\in\mathrm{Irr}(R)$ since $R$ is commutative, with multiplicity $1$ \cite[Remark 2.11]{ost15}.  Let $\Gamma$ be an index set of representatives of nontrivial $\phi$-orbits in $B$.  As $R$ is commutative, $\phi(x)=x^\ast$ for all $x\in B$ by Corollary \ref{cor:com}.  Then
\begin{equation}
\alpha=1+2\sum_{x\in \Gamma}xx^\ast,
\end{equation}
\par Therefore the matrix $N$ of multiplication by $\alpha$ is the $|B|\times|B|$ identity added to a matrix entirely composed of even integers.  Moreover the elements of the diagonal are odd integers.  We claim the determinant of $N$ is an odd integer as well.  The case $|B|=1$ is trivial, and we continue by induction.  If $|B|=n$ for some $n>1$, then the terms in the cofactor expansion along the first row are all even, except the first term which is odd by the inductive hypothesis.
\end{proof}

\par We expect the analogous result to Proposition \ref{prop1} to be true for odd primes $p\in\mathbb{Z}_{\geq3}$.  When $p=2$, small formal codegrees are highly restrictive.

\begin{lemma}\label{lem3}
Let $(R,B)$ be a commutative fusion ring and $\phi\in\mathrm{Aut}(R,B)$ be fixed-point-free of order 2.  If $3$ is a formal codegree of $R$, then $R_\mathrm{pt}\cong\mathbb{Z}C_3$. 
\end{lemma}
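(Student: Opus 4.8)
The plan is to bound the order of the group $G$ of invertible elements of $R$ by means of the formal codegree $3$, and then identify $G$. First I would collect the structural consequences already at hand. Since $R$ is commutative and $\phi$ is a fixed-point-free involution, Corollary \ref{cor:com} gives $\phi(x)=x^\ast$ for all $x\in B$, so $x\neq x^\ast$ for every nontrivial $x$; Theorem \ref{thm1}(1) makes $R$ integral with $\mathrm{FPdim}(R)$ odd; and Proposition \ref{evenprop} forces every $\mathrm{FPdim}(x)$ to be odd. The restriction of $\phi$ to $R_\mathrm{pt}=\mathbb{Z}G$ is $g\mapsto g^{-1}$ and is fixed-point-free, so $G$ has no element of order $2$ and hence $|G|$ is odd. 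The key input is the inequality $f_\varphi\geq|G|$ established in the proof of Lemma \ref{formlem}, valid for every $\varphi\in\mathrm{Irr}(R)$, with equality exactly when $\varphi$ vanishes on all noninvertible basis elements. Applying this to the character $\varphi$ with $f_\varphi=3$ gives $|G|\leq 3$, so $|G|\in\{1,3\}$.

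If $|G|=3$, then $G\cong C_3$ is the unique group of that order and $R_\mathrm{pt}\cong\mathbb{Z}C_3$, which is the desired conclusion; this case already subsumes the possibility $\mathrm{FPdim}(R)=3$, since an integral ring of that dimension has exactly two nontrivial basis elements, both invertible. The entire content of the lemma therefore reduces to excluding $|G|=1$, and this is the step I expect to be the main obstacle.

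To rule out $|G|=1$, assume it and fix $\varphi\in\mathrm{Irr}(R)$ with $f_\varphi=3$; then $\varphi\neq\mathrm{FPdim}$, and by Proposition \ref{propcomp} the character $\varphi$ is not real-valued. Writing $\alpha=\sum_{x\in B}xx^\ast=1_R+2\sum_{x\in\Gamma}xx^\ast$ for a set $\Gamma$ of representatives of the dual pairs (exactly as in Proposition \ref{prop1}), evaluation of $\varphi$ yields $\sum_{x\in\Gamma}|\varphi(x)|^2=1$, where each $|\varphi(x)|^2=\varphi(xx^\ast)$ is a \emph{totally nonnegative} algebraic integer, since every Galois conjugate of $\varphi(xx^\ast)$ equals $|\psi(x)|^2\geq 0$ for the corresponding $\psi\in\mathrm{Irr}(R)$, using that $xx^\ast$ is self-dual. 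Now I would exploit integrality: because $|G|=1$, every noninvertible $x$ has $\mathrm{FPdim}(x)\geq 3$, so $\mathrm{FPdim}(R)\geq 19$, while the orthogonality relation of Equation (\ref{orthochar}) between $\varphi$ and $\mathrm{FPdim}$ gives $\sum_{x\in\Gamma}\mathrm{Re}(\varphi(x))\,\mathrm{FPdim}(x)=-\tfrac12$. The delicate point, and the step requiring the most care, is turning ``the non-principal codegree $3$ is too small for a ring all of whose nontrivial basis elements have Frobenius--Perron dimension at least $3$'' into a genuine contradiction. I would attempt this by combining the two displayed constraints with the reciprocal-sum identity (\ref{codegrees}) and a second orthogonality relation $\sum_{\psi}|\psi(x)|^2/f_\psi=1$ derived alongside (\ref{orthochar}) to bound the admissible non-principal codegrees strictly above $3$; should a clean inequality prove elusive, the fallback is a direct Galois-trace computation over the orbit of $\varphi$, using total nonnegativity of the $\varphi(xx^\ast)$ to pin down their rational symmetric functions.
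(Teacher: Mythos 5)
Your reduction is correct as far as it goes: the inequality $f_\varphi\geq|G|$ does hold for every character of a commutative fusion ring (since $|\varphi(g)|=1$ for invertible $g$), so $|G|\leq3$; oddness of $|G|$ then gives $|G|\in\{1,3\}$, and $|G|=3$ immediately yields the conclusion. But the entire content of the lemma is the exclusion of $|G|=1$, and there you have not given a proof: you record the two constraints $\sum_{x\in\Gamma}|\varphi(x)|^2=1$ and $\sum_{x\in\Gamma}\mathrm{Re}(\varphi(x))\,\mathrm{FPdim}(x)=-\tfrac12$ and then describe strategies you ``would attempt,'' explicitly conceding that the contradiction is not in hand. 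That step is where the paper does essentially all of its work, so this is a genuine gap rather than a stylistic difference.

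The idea you are missing is to exploit Galois stability of the codegree, not just the single equation for $\varphi$. In the paper's proof: formal codegrees are closed under Galois conjugacy, so every conjugate of $\varphi$ also has codegree $3$ and contributes $1/3$ to Equation (\ref{codegrees}); three or more conjugates would already sum to at least $1$, leaving no room for $1/\mathrm{FPdim}(R)$, so the Galois orbit of $\varphi$ has at most two elements. Hence $\mathbb{Q}(\varphi(y):y\in B)$ is an imaginary quadratic field (non-real by Proposition \ref{propcomp}), so each $|\varphi(y)|^2$ is a field norm, i.e.\ a rational integer; then $f_\varphi=3$ forces exactly one dual pair $x,x^\ast$ with $\varphi(x)$ a quadratic root of unity and $\varphi=0$ elsewhere, and orthogonality with $\mathrm{FPdim}$ forces $\varphi(x)=\zeta_3^{\pm1}$ and $\mathrm{FPdim}(x)=1$, i.e.\ $x$ is invertible --- contradicting $|G|=1$. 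For what it is worth, your own sketch can be completed along similar lines: every conjugate $\psi$ of $\varphi$ satisfies $\sum_{x\in\Gamma}|\psi(x)|^2=1$, so every conjugate of the algebraic integer $\varphi(xx^\ast)$ lies in $[0,1]$; a nonzero such integer has norm at least $1$, hence all its conjugates equal $1$, so $\varphi(xx^\ast)\in\{0,1\}$ with exactly one $x_0$ surviving; Kronecker's theorem makes $\varphi(x_0)$ a root of unity, and then $2\mathrm{Re}(\varphi(x_0))=-1/\mathrm{FPdim}(x_0)$ is a rational algebraic integer, forcing $\mathrm{FPdim}(x_0)=1$. Without some such use of the Galois orbit, the constraints you wrote down do not by themselves yield a contradiction.
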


\begin{proof}
Let $\varphi\in\mathrm{Irr}(R)$ such that $f_\varphi=3$.  If $R=R_\mathrm{pt}$ we are done, so we may assume that $\varphi\neq\mathrm{FPdim}$.  We must have
\begin{equation}
[\mathbb{Q}(\varphi(y):y\in B):\mathbb{Q}]\in\{1,2\}
\end{equation}
as all Galois conjugates of $\varphi$ will share the same formal codegree which can be computed as
\begin{equation}\label{eighteen}
3=\sum_{y\in B}|\varphi(y)|^2=1+\sum_{y\in B\setminus\{1_R\}}|\varphi(y)|^2.
\end{equation}
We may conclude $\mathbb{Q}(\varphi(y):y\in B)$ is a complex quadratic field by Proposition \ref{propcomp}.  
But $|\varphi(y)|^2=\varphi(y)\overline{\varphi(y)}\in\mathbb{Z}_{\geq0}$ is the algebraic norm of $\varphi(y)\in\mathbb{Q}(\varphi(y):y\in B)$, thus there exist distinct nontrivial $x_1,x_2\in B$ such that $|\varphi(x_1)|^2=|\varphi(x_2)|^2=1$.  Therefore $\varphi(y)=0$ for all $y\not\in\{1_R,x_1,x_2\}$, and moreover $x:=x_1=x_2^\ast$.  The only complex quadratic units which are not integers are $\zeta\in\{\pm\zeta_4,\pm\zeta_3,\pm\zeta_3^2\}$, so without loss of generality, assume $\varphi(x)=\zeta$ and thus $\varphi(x^\ast)=\zeta^{-1}$.  Orthogonality of $\varphi$ with $\mathrm{FPdim}$ gives
\begin{equation}
0=1+(\zeta+\zeta^{-1})\mathrm{FPdim}(x),
\end{equation}
which implies that $\zeta=\zeta_3$, up to a relabelling of $x$ and $x^\ast$, and that $\mathrm{FPdim}(x)=\mathrm{FPdim}(x^\ast)=1$.  There can be no other nontrivial invertible basis elements $y\in B$ since necessarily $\varphi(y)$ would be nonzero, violating Equation (\ref{eighteen}).
\end{proof}

\begin{lemma}\label{thebiglemma}
Let $(R,B)$ be a commutative fusion ring with $R\neq R_\mathrm{pt}$ and $\phi\in\mathrm{Aut}(R,B)$ be fixed-point-free of order 2.  If $5$ is a formal codegree of $R$, then $5$ is a formal codegree of multiplicity $4$. 
\end{lemma}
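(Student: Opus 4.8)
The plan is to pin down the value field of a codegree-$5$ representation and then read off the multiplicity from the codegree identity $\sum_{\varphi}\dim(\varphi)/f_\varphi=1$ (Equation \ref{codegrees}), which for the commutative ring $R$ reads $\sum_\varphi 1/f_\varphi=1$. First I would dispose of the possibility $\mathrm{FPdim}(R)=5$: since $R$ is integral by Theorem \ref{thm1}(1) and $\mathrm{FPdim}(R)=\sum_{x\in B}\mathrm{FPdim}(x)^2$, the equation $\sum_{x\ne 1_R}\mathrm{FPdim}(x)^2=4$ forces either four nontrivial invertibles (so $R\cong\mathbb{Z}C_5=R_\mathrm{pt}$, excluded) or a single basis element of dimension $2$ (rank $2$, impossible since a fixed-point-free duality has no nontrivial self-dual element). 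As $5=f_\varphi\le\mathrm{FPdim}(R)$ for any codegree-$5$ representation $\varphi$ (the Frobenius--Perron eigenvalue dominates, i.e.\ $|\varphi(x)|\le\mathrm{FPdim}(x)$), this gives $\mathrm{FPdim}(R)>5$, so every codegree-$5$ representation is distinct from $\mathrm{FPdim}$ and hence takes a non-real value by Proposition \ref{propcomp}.

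Writing $K=\mathbb{Q}(\varphi(x):x\in B)$ and using $\varphi(x^\ast)=\overline{\varphi(x)}$ together with the fact that $\ast$ permutes $B$, the field $K$ is stable under complex conjugation, which is therefore an order-$2$ automorphism of $K$; in particular $[K:\mathbb{Q}]$ is even. Next I would bound the multiplicity $m$ of $5$ in two directions. Since $f_{\sigma\varphi}=\sigma\varphi(\alpha)=\sigma(f_\varphi)=5$ for every $\sigma\in\mathrm{Gal}(\overline{\mathbb{Q}}/\mathbb{Q})$, all $[K:\mathbb{Q}]$ Galois conjugates of $\varphi$ are codegree-$5$ representations, so $[K:\mathbb{Q}]\le m$; on the other hand the $m$ codegree-$5$ representations contribute $m/5$ to Equation \ref{codegrees}, while $\mathrm{FPdim}$ contributes a further $1/\mathrm{FPdim}(R)>0$, forcing $m\le 4$. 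Combining these, $[K:\mathbb{Q}]\in\{2,4\}$, and it suffices to exclude the quadratic case: then $[K:\mathbb{Q}]=4$ gives $m\ge 4$ and hence $m=4$.

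Excluding $K$ imaginary quadratic is the main obstacle. In that case each $|\varphi(x)|^2=N_{K/\mathbb{Q}}(\varphi(x))$ is a non-negative rational integer, and since duality is fixed-point-free the nonzero values away from $1_R$ split into dual pairs $\{x,x^\ast\}$ of equal norm; as $\sum_{x\ne 1_R}|\varphi(x)|^2=4$, the pair-norms sum to $2$, leaving either one pair of norm $2$ or two pairs of norm $1$. The extra leverage I would exploit is orthogonality of $\varphi$ with its complex conjugate $\overline{\varphi}=\varphi\circ\phi$ (distinct irreducibles by Proposition \ref{propcomp}), which via $\overline{\varphi}(x^\ast)=\varphi(x)$ and Equation \ref{orthochar} yields the clean relation $\sum_{x\in B}\varphi(x)^2=0$. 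In the norm-$2$ case this forces $\mathrm{Re}(\varphi(x)^2)=-\tfrac12$, and combined with $N(\varphi(x))=a^2+b^2d=2$ (writing $\varphi(x)=a+b\sqrt{-d}$) it produces $a^2=\tfrac34$, impossible since the real part of an algebraic integer of $K$ lies in $\tfrac12\mathbb{Z}$.

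In the two-norm-$1$-pairs case the values on the support are roots of unity of $K$, and a non-real one exists (as $\varphi$ is non-real), forcing $K\in\{\mathbb{Q}(i),\mathbb{Q}(\zeta_3)\}$; there the identity $\sum_{x\in B}\varphi(x)^2=1+2\sum_{\text{pairs}}\mathrm{Re}(\varphi(x)^2)$ runs over the finitely many possible values of $\mathrm{Re}(\varphi(x)^2)$ (namely $\{1,-1\}$ or $\{1,-\tfrac12\}$) and never vanishes. Either branch contradicts $\sum_{x\in B}\varphi(x)^2=0$, so $[K:\mathbb{Q}]=4$ and $m=4$. I expect this last finite residue check to be the crux; the reductions to it are formal. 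Note that this route yields the multiplicity directly, without having to identify $K=\mathbb{Q}(\zeta_5)$ or $R_\mathrm{pt}\cong\mathbb{Z}C_5$ explicitly, though the extremal example $\mathbb{Z}C_5$ confirms that multiplicity $4$ is attained.
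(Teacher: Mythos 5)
Your proof is correct, and it follows the same skeleton as the paper's: reduce to showing the multiplicity cannot be $2$, identify that case with the value field $K=\mathbb{Q}(\varphi(y):y\in B)$ being imaginary quadratic, and then split according to whether the nonzero norms away from $1_R$ form one dual pair of norm $2$ or two dual pairs of norm $1$. The difference is in how the two cases are killed. The paper finishes the norm-$2$ case with orthogonality against $\mathrm{FPdim}$ (forcing $\mathrm{Re}(\varphi(x))=-1/2$, then $n=-7$ and $\mathrm{FPdim}(x)=1$, a contradiction), and finishes the norm-$1$ case by enumerating quadratic integers of norm $1$ and again pairing orthogonality with $\mathrm{FPdim}$ against Proposition \ref{evenprop} (parity of dimensions), invoking orthogonality of $\varphi$ with $\overline{\varphi}$ only in one subcase. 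You instead run \emph{both} cases through the single identity $\sum_{x\in B}\varphi(x)^2=0$ (orthogonality of $\varphi$ with $\overline{\varphi}=\varphi\circ\phi$): in the norm-$2$ case it combines with $N(\varphi(x))=2$ to give $a^2=3/4$, impossible for a half-integer, and in the norm-$1$ case the values are roots of unity in $\mathbb{Q}(i)$ or $\mathbb{Q}(\zeta_3)$, where a finite check shows the sum never vanishes. This is a genuine streamlining: it avoids Proposition \ref{evenprop} and the $\mathrm{FPdim}$-orthogonality casework entirely, at the cost of losing the explicit arithmetic data (the field $\mathbb{Q}(\sqrt{-7})$, the dimension constraints) that the paper recycles verbatim in the proof of Proposition \ref{propfive} via the reference to Equation (\ref{tcho}). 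You also make explicit two steps the paper leaves terse: that $\mathrm{FPdim}(R)\neq 5$ (so no codegree-$5$ representation is $\mathrm{FPdim}$, whence non-reality via Proposition \ref{propcomp}), and the bookkeeping $[K:\mathbb{Q}]\le m\le 4$ with $[K:\mathbb{Q}]$ even, which is the clean way to see that excluding the quadratic case immediately yields multiplicity exactly $4$.
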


\begin{proof}
Let $\varphi\in\mathrm{Irr}(R)$ such that $f_\varphi=5$.  By Proposition \ref{jan} and Theorem \ref{thm1}(4), the multiplicity of $5$ as a formal codegree is either $2$ or $4$ since $R\neq R_\mathrm{pt}$; any higher even multiplicity violates Equation (\ref{codegrees}).  Assume the multiplicity of $5$ as a formal codegree is $2$.  Then $\mathbb{Q}(\varphi(y):y\in B)$ is a complex quadratic field by Proposition \ref{propcomp}.  The definition of $f_\varphi$ is $5=1+\sum_{y\in B\setminus\{1_R\}}|\varphi(y)|^2$ with $|\varphi(y)|^2\in\mathbb{Z}_{\geq0}$.  One possibility is that there exists a unique (up to duality) $x\in B$ such that $|\varphi(x)|^2=|\varphi(x^\ast)|^2=2$.  In this case orthogonality with $\mathrm{FPdim}$ gives
\begin{equation}\label{eqaux}
0=1+(\varphi(x)+\varphi(x^\ast))\mathrm{FPdim}(x).
\end{equation}
Since $\varphi(x)+\varphi(x^\ast)=2\mathrm{Re}(\varphi(x))\neq0$ for this to be true, then $-1/(2\mathrm{Re}(\varphi(x)))=\mathrm{FPdim}(x)$.  But $\varphi(x)$ is a complex quadratic integer so its real part must be a half integer, which forces $\mathrm{Re}(\varphi(x))=-1/2$ so that $\mathrm{FPdim}(x)\in\mathbb{Z}_{\geq1}$.  Let $b\in\mathbb{Z}_{\neq0}$ such that $\varphi(x)=(1/2)(-1+b\sqrt{n})$ for some $n\in\mathbb{Z}_{<0}$.  Then
\begin{equation}\label{tcho}
2=|\varphi(x)|^2=(1/2)(-1+b\sqrt{n})(1/2)(-1-b\sqrt{n})=(1/4)(1-b^2n).
\end{equation}
Thus $b^2n=-7$ and furthermore $b=\pm1$ and $n=-7$.  Equation (\ref{eqaux}) then implies $\mathrm{FPdim}(x)=1$, which is a contradiction since $\varphi(x)$ is nonzero, and not a root of unity.

\par The only other possibility is that there exist unique (up to duality) nontrivial $x,y\in B$ such that $x\neq y^\ast$ and $|\varphi(x)|^2=|\varphi(y)|^2=1$.  If $\varphi(x)=a_x+b_x\sqrt{n}$ for some half-integers $a_x,b_x$ and $n\in\mathbb{Z}_{<0}$, then $1=|\varphi(x)|^2=a_x^2-b_x^2n$.  Since $-b_x^2n\geq0$, then there are only three cases:
\begin{align}
&&a_x&=0&\text{ and }&&-b_x^2n&=1 \\
\Rightarrow&& b_x&=\pm1&\text{ and }&&n&=-1, \nonumber \\ &&a_x&=\pm1/2&\text{ and }&&-b_x^2n&=3/4&& \\
\Rightarrow&& b_x&=\pm1/2&\text{ and }&&n&=-3,\text{ and} \nonumber\\
&&a_x&=\pm1&\text{ and }&&b_x&=0&&&.
\end{align}
The same is true for $\varphi(y)=a_y+b_y\sqrt{n}$ for some half integers $a_y,b_y$.  Orthogonality of $\varphi$ and $\mathrm{FPdim}$ gives
\begin{equation}\label{theabove}
-1/2=a_x\mathrm{FPdim}(x)+a_y\mathrm{FPdim}(y).
\end{equation}
If $a_x=0$, then either $a_y=0$ or $a_y=\pm1$; neither satisfies Equation (\ref{theabove}).  If $a_x=\pm1/2$, then $a_y=\pm1/2$ or $a_y=\pm1$.  In the former case we may assume $a_x=1/2$ and $a_y=-1/2$ without loss of generality.  Thus $\mathrm{FPdim}(y)-1=\mathrm{FPdim}(x)$ which cannot occur since this implies at least one of $\mathrm{FPdim}(x)$ or $\mathrm{FPdim}(y)$ is even, violating Proposition \ref{evenprop}.  In the latter case, orthogonality of $\varphi$ with $\overline{\varphi}$ gives
\begin{equation}
0=1+2\mathrm{Re}(\varphi(x)^2)+2\mathrm{Re}(\varphi(y)^2)=1-2+2=1.
\end{equation}
So we must conclude there is no viable character $\varphi$ and thus the multiplicity of $5$ as a formal codegree is not $2$, so it must be $4$.
\end{proof}

\begin{proposition}\label{propfive}
Let $(R,B)$ be a commutative fusion ring and $\phi\in\mathrm{Aut}(R,B)$ be fixed-point-free of order 2.  If $5$ is a formal codegree of $R$, then $\mathrm{R}_\mathrm{pt}\cong\mathbb{Z}C_5$. 
\end{proposition}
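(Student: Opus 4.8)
The plan is to pin down the group $G$ of invertible elements, since $R_\mathrm{pt}\cong\mathbb{Z}G$, and to treat the pointed and non-pointed cases separately. If $R=R_\mathrm{pt}$ then $R\cong\mathbb{Z}G$ with $G$ abelian of odd order (a commutative ring carrying a fixed-point-free involution), every formal codegree of $\mathbb{Z}G$ equals $|G|$, and so $5$ being a codegree gives $|G|=5$ and $G\cong C_5$. So assume $R\neq R_\mathrm{pt}$. By Lemma \ref{thebiglemma}, $5$ occurs with multiplicity exactly $4$; let $\varphi_1,\dots,\varphi_4$ be the corresponding characters. I would first argue that none of them takes values in a complex quadratic field: the case analysis in the multiplicity-$2$ part of Lemma \ref{thebiglemma} depends only on having one character $\varphi$ with $f_\varphi=5$ whose values lie in a complex quadratic field, not on the multiplicity, so it excludes this verbatim. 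Hence each $\mathbb{Q}(\varphi_i(y):y\in B)$ is a quartic CM field and $\varphi_1,\dots,\varphi_4$ form a single Galois orbit. Since $5=\sum_{y\in B}|\varphi_1(y)|^2\geq\sum_{g\in G}|\varphi_1(g)|^2=|G|$, and $|G|$ is odd because $\phi$ restricts to a fixed-point-free involution of the characteristic subring $R_\mathrm{pt}\cong\mathbb{Z}G$, we are reduced to $|G|\in\{1,3,5\}$, and it suffices to rule out $|G|\in\{1,3\}$.

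To exclude $|G|=3$ I would use the refinement of Equation (\ref{codegrees}) coming from the second orthogonality relation $\sum_{\varphi\in\mathrm{Irr}(R)}\varphi(x)/f_\varphi=\delta_{x,1_R}$ for all $x\in B$, which is the coefficient of $x$ in the decomposition $1_R=\sum_\varphi\frac{1}{f_\varphi}\sum_{b\in B}\overline{\varphi(b)}b$ of the identity into primitive idempotents. Restricting $x$ to $G$ and applying Fourier inversion on $G$ yields $\sum_{\varphi|_G=\chi}1/f_\varphi=1/|G|$ for every $\chi\in\mathrm{Irr}(G)$. As $\varphi_1,\dots,\varphi_4$ form a single Galois orbit and restriction to $G$ is Galois-equivariant, their restrictions form one Galois orbit in $\mathrm{Irr}(C_3)$ with constant fibre size; that orbit is either $\{\chi_0\}$, with all four restricting trivially, or $\{\chi,\overline{\chi}\}$, with two each. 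In the first case some $\chi$ receives at least $4/5$ and in the second at least $2/5$, either way exceeding $1/|G|=1/3$, a contradiction.

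The crux is excluding $|G|=1$. Here $\phi$ is duality with no nontrivial self-dual basis element, so Proposition \ref{evenprop} makes every $\mathrm{FPdim}(x)$ odd and every noninvertible $\mathrm{FPdim}(y)\geq 3$. Write $\varphi=\varphi_1$, $K=\mathbb{Q}(\varphi(y):y\in B)$, and group the noninvertible elements into duality pairs, so $\sum_{\text{pairs}}|\varphi(y)|^2=2$. Each pair with $\varphi(y)\neq0$ contributes a totally positive algebraic integer $|\varphi(y)|^2$ of $K^{+}$ whose trace to $\mathbb{Q}$ is at least $2$, so at most two pairs are nonzero. A single nonzero pair forces $\mathrm{Re}(\varphi(y))\in\mathbb{Q}$ through orthogonality with $\mathrm{FPdim}$, making $\varphi(y)$ quadratic and contradicting quarticity of $K$; two nonzero pairs force $|\varphi(y_1)|^2=|\varphi(y_2)|^2=1$, so by Kronecker's theorem $\varphi(y_1),\varphi(y_2)$ are roots of unity and $K$ is one of the quartic cyclotomic fields $\mathbb{Q}(\zeta_5),\mathbb{Q}(\zeta_8),\mathbb{Q}(\zeta_{12})$. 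In each I would split the relation $(\zeta_1+\zeta_1^{-1})\mathrm{FPdim}(y_1)+(\zeta_2+\zeta_2^{-1})\mathrm{FPdim}(y_2)=-1$ (orthogonality with $\mathrm{FPdim}$) into its rational and $K^{+}$-irrational parts; the finitely many admissible cosines $\tfrac12(\zeta+\zeta^{-1})$ always force one of $\mathrm{FPdim}(y_1),\mathrm{FPdim}(y_2)$ to drop below $3$, contradicting noninvertibility. This excludes $|G|=1$, leaving $|G|=5$ and hence $R_\mathrm{pt}\cong\mathbb{Z}C_5$.

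I expect the arithmetic for $|G|=1$ to be the main obstacle: the complex-quadratic exclusion and the $|G|=3$ count are quick once the idempotent orthogonality relation is in hand, but extracting a contradiction from $|G|=1$ needs the trace estimate, the Kronecker step, and a short but genuine enumeration of the three quartic cyclotomic fields and their cosines. A secondary delicate point is certifying that the four codegree-$5$ characters really form a single quartic Galois orbit rather than two quadratic ones, which is precisely where the recycled argument from Lemma \ref{thebiglemma} does the work.
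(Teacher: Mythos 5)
Your proposal is correct, and it reaches the conclusion by a genuinely different route than the paper's proof. The shared portion is the opening: both you and the paper recycle the multiplicity-two analysis of Lemma \ref{thebiglemma} to rule out complex quadratic fields of character values (this recycling is legitimate for exactly the reason you give), and both conclude that the four codegree-$5$ characters form a single Galois orbit with quartic field of values; in a full write-up note that once orbit sizes $1$ and $2$ are excluded, a partition of $4$ admits only a single orbit of size $4$, so no separate cubic case arises. After that the arguments diverge. The paper never treats $|G|$ as the unknown: it analyzes the values of $\varphi_1$ directly, splitting on whether $|\varphi_1(x)|^2$ and $|\varphi_2(x)|^2$ are both at most $1$ or of mixed size, shows the nonzero values must be $10$th roots of unity, and solves orthogonality with $\mathrm{FPdim}$ to get $\mathrm{FPdim}(x)=\mathrm{FPdim}(y)=1$, thereby exhibiting the four nontrivial invertibles of $C_5$ explicitly. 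You instead bound $|G|\leq5$ via $\sum_{g\in G}|\varphi_1(g)|^2\leq f_{\varphi_1}=5$, force $|G|$ odd, and eliminate $|G|\in\{1,3\}$. Your elimination of $|G|=3$ through the column orthogonality relation $\sum_{\varphi}\varphi(x)/f_\varphi=\delta_{x,1_R}$ and Galois-equivariance of restriction to $G$ is an ingredient the paper does not use anywhere; it is valid for commutative fusion rings, by the primitive idempotent decomposition you cite, and it avoids character-value arithmetic entirely, so it is plausibly more portable to other codegrees. Your elimination of $|G|=1$ is essentially the paper's arithmetic repackaged (Kronecker, the three quartic cyclotomic fields, cosine bookkeeping against orthogonality with $\mathrm{FPdim}$), with the paper's ``solution'' $\mathrm{FPdim}=1$ now read as a contradiction against $\mathrm{FPdim}(y)\geq3$. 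The paper's version is more constructive, since it identifies the invertible elements and the character values; yours isolates the group-theoretic content earlier and disposes of it with softer tools.

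Three details should be made explicit when writing this up. First, ``two nonzero pairs force $|\varphi(y_1)|^2=|\varphi(y_2)|^2=1$'': from your trace bound, both traces must equal exactly $2$, and then AM--GM (equivalently, the observation that an irrational possibility would have norm strictly between $0$ and $1$) forces each $|\varphi(y_i)|^2$ to be rational, hence $1$; this is the step that replaces the paper's totally-positive-quadratic-integer analysis around Equation (\ref{realeq}), and it is short but not automatic. Second, in $\mathbb{Q}(\zeta_8)$ and $\mathbb{Q}(\zeta_{12})$ the contradiction is not that a dimension drops below $3$: the irrational parts of the orthogonality relation cannot cancel unless $\mathrm{FPdim}(y_1)=\mathrm{FPdim}(y_2)$, whereupon the rational part reads $0=-1$; only $\mathbb{Q}(\zeta_5)$ produces genuine solutions, and those have $\mathrm{FPdim}=1<3$. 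Third, ``quartic CM field'' is more than you have justified or need; what your argument actually uses is that $K\cap\mathbb{R}$ is real quadratic and that the Galois conjugates of $|\varphi(y)|^2$ are the nonnegative numbers $|(\sigma\varphi)(y)|^2$, both of which are correct.
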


\begin{proof}
If $R$ is pointed, we are done.  Otherwise $R$ has $5$ as a formal codegree of multiplicity $4$ from Lemma \ref{thebiglemma}.  Let $\varphi_1,\overline{\varphi_1},\varphi_2,\overline{\varphi_2}$ be the irreducible representations of $R$ with formal codegree $5$.  Let $x\in B$ be such that $\varphi_1(x)\not\in\mathbb{R}$ which exists by Proposition \ref{propcomp}.  As in the proof of Lemma \ref{thebiglemma}, if $\varphi_1$ and $\varphi_2$ are not Galois conjugate, then $\mathbb{Q}(\varphi(x))$ is a complex quadratic field and $|\varphi(x)|^2=|\varphi(x^\ast)|^2\in\{1,2\}$.  If $|\varphi(x)|^2=|\varphi(x^\ast)|^2=2$, then $\varphi(y)=0$ for all $y\in B\setminus\{1_R,x,x^\ast\}$.  Moreover a contradiction is reached in the exact manner as in the discussion around Equation (\ref{tcho}).  Thus $\varphi_1(x)$ is a complex quadratic unit, i.e.\ $\varphi_1(x)\in\{\pm\zeta_4,\pm\zeta_3,\pm\zeta_3^2\}$.  By the same reasoning, there must exist a unique up to duality $y\in B\setminus\{1_R,x,x^\ast\}$ such that $\varphi(y)\in\{\pm\zeta_4,\pm\zeta_3,\pm\zeta_3^2\}$ as well.  Orthogonality of $\varphi_1$ and $\overline{\varphi_1}$ insists $-1/2=\mathrm{Re}(\varphi_1(x)^2)+\mathrm{Re}(\varphi_1(y)^2)$ and $\mathbb{Q}(\varphi(x))=\mathbb{Q}(\varphi(y))$.  A finite check shows there are no possible solutions, so we must conclude $\varphi_1$ and $\varphi_2$ are Galois conjugate.

\par With $\varphi_1$ and $\varphi_2$ Galois conjugate, assume first that $|\varphi_1(x)|^2$ and $|\varphi_2(x)|^2$ are less than or equal to $1$.  Then $\varphi_1(x),\varphi_2(x)$ are roots of unity \cite[Lemma 3.3.14]{tcat}, therefore $|\varphi_1(x)|^2=|\varphi_2(x)|^2=1$.  Since $1+2|\varphi_1(x)|^2<5$, there must exist $y\in B\setminus\{1_R,x,x^\ast\}$ such that $\varphi_1(y)\neq0$, and if $\varphi_1(y)$ is not a root of unity, then either $|\varphi_1(y)|^2$ or $|\varphi_2(y)|^2$ is strictly greater than 1, violating the fact that $f_{\varphi_1}=f_{\varphi_2}=5$.  So we conclude $\varphi_1(x),\varphi_1(y),\varphi_2(x),\varphi_2(y)$ are roots of unity and at least one has exactly $4$ distinct Galois conjugates, say $\varphi_1(x)$ without loss of generality.  In this case we must have $\varphi_1(y),\varphi_2(y)\subset\mathbb{Q}(\varphi_1(x))$ and $\varphi_1(z)=0$ for all $z\not\in\{1_R,x,x^\ast,y,y^\ast\}$.  The possible primitive roots of unity $\varphi_1(x)$ with exactly $4$ Galois conjugates have orders in the set $\{5,8,10,12\}$.  There are no solutions to the orthogonality relation of $\varphi_1$ and $\overline{\varphi_1}$: $-1/2=\mathrm{Re}(\varphi_1(x)^2)+\mathrm{Re}(\varphi_1(y)^2)$ when $\varphi_1(x)$ and $\varphi_1(y)$ are $8$th roots of unity, and when they are $12$th roots of unity, the only feasible solutions have $2\mathrm{Re}(\varphi_1(x))=\pm\sqrt{3}$ and $\mathrm{Re}(\varphi_1(y))=0$ without loss of generality.  Orthogonality of $\varphi_1$ and $\mathrm{FPdim}$ would then imply $0=1\pm\sqrt{3}\mathrm{FPdim}(x)$, which cannot occur since $\mathrm{FPdim}(x)\in\mathbb{Z}$.  Moreover $\varphi_1(x)$ and $\varphi_1(y)$ are $10$th roots of unity.  For all solutions to $-1/2=\mathrm{Re}(\varphi(x)^2)+\mathrm{Re}(\varphi(y)^2)$, we have $2\mathrm{Re}(\varphi_1(x))=\epsilon_x\alpha$ and $2\mathrm{Re}(\varphi_1(y))=\epsilon_y\alpha^{-1}$ where $\epsilon_x,\epsilon_y\in\{\pm1\}$ and $\alpha$ is $(1/2)(1+\sqrt{5})$.  Thus orthogonality of $\varphi_1$ and $\mathrm{FPdim}$ gives
\begin{equation}
-1=\epsilon_x\alpha\mathrm{FPdim}(x)+\epsilon_y\alpha^{-1}\mathrm{FPdim}(y).
\end{equation}
Clearly $\mathrm{FPdim}(x)=\mathrm{FPdim}(y)$, or else the right-hand side is irrational, thus $-1/(\epsilon_x\alpha+\epsilon_y\alpha^{-1})=\mathrm{FPdim}(x)=\mathrm{FPdim}(y)$.  The only selection of signs $\epsilon_x,\epsilon_y$ producing a positive integer Frobenius-Perron dimension is $\epsilon_x=\epsilon_y=-1$ with $\mathrm{FPdim}(x)=\mathrm{FPdim}(y)=1$.  And as $\varphi_1(z)=0$ for all $z\not\in\{1_R,x,x^\ast,y,y^\ast\}$, then $R_\mathrm{pt}\cong\mathbb{Z}C_5$ in this case.

\par Finally, we need to demonstrate the case $|\varphi_1(x)|^2<1$ and $|\varphi_2(x)|^2>1$ is not possible, without loss of generality.  In this case $|\varphi_1(x)|^2$, and thus $|\varphi_1(y)|^2$ are totally positive quadratic integers, say $|\varphi_1(x)|^2=a_x+b_x\sqrt{n}$ and $|\varphi_1(y)|^2=a_y+b_y\sqrt{n}$ for some half integers $a_x,a_y,b_x,b_y$ and $n\in\mathbb{Z}_{\geq1}$ subject to the formal codegree conditions for $\varphi_1$ and $\varphi_2$:
\begin{align}\label{realeq}
2&=a_x+a_y\pm(b_x+b_y)\sqrt{n}.
\end{align}
Therefore $a_x+a_y=2$, and to ensure total positivity, the real constraints are $a_x,a_y\in\{1/2,1,3/2\}$, and the complex constraints are $|b_x|<a_x/\sqrt{n}$ and $|b_y|\sqrt{n}<a_y/\sqrt{n}$.  If $a_x$ (or $a_y$) is $1/2$, there are no positive integers $n$ such that $|b_x|$ (or $|b_y|$) can be a half integer.  If $a_x$ (or $a_y$) is $1$, then $|b_x|=1/2$ (or $|b_y|=1/2$) for any positive square-free integer $n$, which forces $n\equiv1\pmod{4}$.  But for the smallest possible value $n=5$, $|b_x|<1/\sqrt{5}<1/2$.  Lastly if $a_x$ (or $a_y$) is $3/2$, then $n\equiv1\pmod{4}$, so the only possibility is that $n=5$ and $|b_x|=1/2$ (or $|b_y|=1/2$).  In conclusion, the only possible values for $a_x$ and $a_y$ are $3/2$, but this violates Equation (\ref{realeq}).
\end{proof}

\begin{example}
Proposition \ref{propfive} is false for fusion rings with self-dual basis elements.  Choose a sign $\pm$ and $a\in\mathbb{Z}$ with $a\pm1\geq0$.  Then the rank 4 commutative fusion ring with nontrivial fusion rules
\begin{equation}
\left[\begin{array}{cccc}
0 & 1 & 0 & 0 \\
1 & a & a\pm1 & a\pm1 \\
0 & a\pm1 & a\pm1 & a \\
0 & a\pm1 & a & a
\end{array}\right]
\end{equation}
\begin{equation}
\left[\begin{array}{cccc}
0 & 0 & 1 & 0 \\
0 & a\pm1 & a\pm1 & a \\
1 & a\pm1 & a & a\pm1 \\
0 & a & a\pm1 & a
\end{array}\right]
\end{equation}
\begin{equation}
\left[\begin{array}{cccc}
0 & 0 & 0 & 1 \\
0 & a\pm1 & a & a \\
0 & a & a\pm1 & a \\
1 & a & a & a
\end{array}\right]
\end{equation}
has formal codegrees $5$, $5$, and the roots of
\begin{equation}
t^2-(27a^2\pm24a+12)t+45a^2\pm40a+20.
\end{equation}
One may convince themself that this is the smallest rank for which a counterexample exists as an exercise.  Note that the formal codegrees are all integers if and only if the chosen sign is positive and $a=0$; in this case the above fusion ring is the character ring $R_{D_5}$ of the dihedral group of order $10$.  When the chosen sign is negative and $a=1$, the above fusion ring is the direct product $F\times F$ where $F$ is the rank $2$ fusion ring with nontrivial basis element $x$ such that $x^2=1_F+x$.
\end{example}


\section{Examples of fixed-point-free automorphisms of prime order}\label{exsec}

Let $(R,B)$ be a fusion ring with a fixed-point-free automorphism $\phi$ of prime order $p\in\mathbb{Z}_{\geq2}$ for the remainder of this section.  We organize our discussion around the number of $\phi$-orbits of $B$.  The unit $1_R$ is an orbit itself, so the trivial fusion ring is the only possible fusion ring with exactly $1$ $\phi$-orbit.  But the identity automorphism does not have prime order so this case is not relevant.

\subsection{Two orbits}\label{subsec:two}

Assume there is exactly one nontrivial $\phi$-orbit of basis elements, hence $\mathrm{rank}(R)=p+1$.  Then Theorem \ref{thm1}(1) implies there exists $d\in\mathbb{Z}_{\geq1}$ such that $\mathrm{FPdim}(x)\in\{1,d\}$ for all $x\in B$ and $d$ satisfies $d^2=nd+1$ for some $n\in\mathbb{Z}_{\geq0}$ by taking the Frobenius-Perron dimension of $xx^\ast$ for any nontrivial $x\in B$.  The only positive integer root of $d^2-nd-1$ is $d=1$, when $n=0$.  Therefore $R\cong\mathbb{Z}G$ for a finite group of order $p+1$ admitting a fixed-point-free automorphism of prime order $p$.  We include a proof of the following well-known lemma, which characterizes such rings, for completeness.
  
\begin{lemma}\label{lem:too}
Let $G$ be a finite group.  If $\mathrm{Aut}(G)$ acts transitively on $G\setminus\{e\}$, then $G$ is an elementary abelian $p$-group for a prime $p\in\mathbb{Z}_{\geq2}$.
\end{lemma}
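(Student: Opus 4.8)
The plan is to exploit two facts: automorphisms preserve the order of group elements, and finite $p$-groups have nontrivial center. First I would observe that since every $\psi \in \mathrm{Aut}(G)$ satisfies $|\psi(g)| = |g|$ for all $g \in G$, transitivity of the action on $G \setminus \{e\}$ forces all nontrivial elements of $G$ to share a common order $n > 1$. I would then argue that $n$ must be prime: if $p$ is any prime divisor of $n$ and $g \in G$ has order $n$, then $g^{n/p}$ has order $p$, and since this is a nontrivial element it must also have order $n$, whence $n = p$. By Cauchy's theorem, no prime other than $p$ can divide $|G|$, so $G$ is a $p$-group.

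Next I would invoke the center. Because $G$ is a nontrivial finite $p$-group, $Z(G) \neq \{e\}$, so there exists a nontrivial central element $z$. The center is a characteristic subgroup, hence $\psi(Z(G)) = Z(G)$ for every $\psi \in \mathrm{Aut}(G)$; in particular the orbit of $z$ under $\mathrm{Aut}(G)$ remains inside $Z(G)$. But transitivity says this orbit is all of $G \setminus \{e\}$, so $G \setminus \{e\} \subseteq Z(G)$, forcing $G = Z(G)$. Therefore $G$ is abelian.

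Finally, an abelian group in which every nontrivial element has order $p$ is by definition an elementary abelian $p$-group (equivalently, a finite-dimensional vector space over the field $\mathbb{F}_p$), completing the proof. I do not anticipate a genuine obstacle here, as each step is standard; the only point requiring a moment's care is the combination of characteristic-ness of $Z(G)$ with transitivity to conclude commutativity, which is the crux of the argument. An alternative to the center step would be to note that if $G$ were nonabelian one could separate, say, the nontrivial central elements from the noncentral ones into distinct automorphism-invariant subsets, contradicting transitivity; I would use the center formulation since it is the cleanest.
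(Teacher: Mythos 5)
Your proof is correct and takes essentially the same route as the paper's: both reduce to showing every nontrivial element has the same prime order $p$ (using order-preservation of automorphisms together with Cauchy's theorem), and both then conclude commutativity by combining the nontriviality of the center of a finite $p$-group with the fact that $Z(G)$ is characteristic, so that transitivity forces $G = Z(G)$. The only cosmetic difference is the order of operations in the first step — you extract a common order $n$ and then prove $n$ is prime, while the paper fixes a prime $p$ dividing $|G|$ and transports the relation $h^p = e$ across the orbit.
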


\begin{proof}
Let $p\in\mathbb{Z}_{\geq2}$ be any prime dividing $|G|$.  Cauchy's theorem implies there exists an element $g\in G\setminus\{e\}$ of order $p$.  If $h\in G\setminus\{e\}$, there exists $\phi\in\mathrm{Aut}(G)$ such that $\phi(g)=h$, hence $h^p=\phi(g)^p=\phi(g^p)=\phi(e)=e$ as well.  Moreover every nontrivial element of $G$ has order $p$ and therefore $G$ is a $p$-group.  As all $p$-groups are nilpotent, $Z(G)$ is nontrival.  But $Z(G)\subset G$ is a characteristic subgroup (preserved under all automorphisms of $G$) in general, thus $G=Z(G)$ and $G$ is abelian, and moreover an elementary abelian $p$-group.
\end{proof}

If $p=2$, then $R\cong\mathbb{Z}C_3$, and if $p+1$ is even, then $2^n-1=p$ is prime for some $n\in\mathbb{Z}_{\geq2}$, i.e.\ $|G|-1$ is a \emph{Mersenne prime}.  This currently provides 51 more examples; the largest known example has order just shy of 25 million digits, and the corresponding $p$ is the largest known prime number at this point in history.


\subsection{Three orbits}\label{subsec:three}

Assume there are exactly two nontrivial $\phi$-orbits of basis elements, hence $\mathrm{rank}(R)=2p+1$.  Then Theorem \ref{thm1}(1) implies there exist $d_1,d_2\in\mathbb{Z}_{\geq1}$ such that $\mathrm{FPdim}(x)\in\{1,d_1,d_2\}$ for all $x\in B$.

\begin{lemma}\label{gcdlem}
Let $(R,B)$ be a nontrivial integral fusion ring.  If there exist $d_1,d_2\in\mathbb{Z}_{\geq1}$ such that $\mathrm{FPdim}(x)\in\{1,d_1,d_2\}$ for all $x\in B$, then $R_\mathrm{pt}$ is nontrivial.
\end{lemma}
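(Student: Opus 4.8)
The plan is to argue by contradiction: suppose $R_\mathrm{pt}$ is trivial, so that $1_R$ is the only basis element of Frobenius--Perron dimension $1$ and every other element of $B$ has dimension $d_1$ or $d_2$ with $d_1,d_2\geq 2$. First I would dispose of the degenerate case in which only one non-unit value $d\geq 2$ actually occurs. Then for any non-unit $x\in B$ the self-dual element $xx^\ast$ is $1_R$ plus non-unit basis elements all of dimension $d$, so $\mathrm{FPdim}(x)^2=d^2=1+dm$ for some $m\in\mathbb{Z}_{\geq0}$, forcing $d\mid 1$, a contradiction. Hence both values occur and we may assume $2\leq d_1<d_2$.

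The core arithmetic step is to choose $x\in B$ with $\mathrm{FPdim}(x)=d_1$ and expand $xx^\ast=1_R+\sum_{y\neq 1_R}c_{x,x^\ast}^y\,y$. Collecting basis elements by dimension and applying $\mathrm{FPdim}$ gives
\begin{equation}
d_1^2-1=d_1a+d_2b
\end{equation}
for nonnegative integers $a,b$. If $b=0$ then $d_1\mid 1$, so $b\geq 1$; and any common divisor $g$ of $d_1$ and $d_2$ divides the right-hand side, hence $g\mid(d_1^2-1)$, while $g\mid d_1^2$, so $g\mid 1$. Therefore $\gcd(d_1,d_2)=1$.

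Coprimality then rigidly constrains all products. For $u,u'\in B$ of dimension $d_1$ with $u'\neq u^\ast$, the product $uu'$ contains no unit and satisfies $d_1^2=d_1s+d_2t$, so $d_2\mid d_1(d_1-s)$; by $\gcd(d_1,d_2)=1$ we get $d_2\mid(d_1-s)$, and since $0\leq d_1-s<d_2$ this forces $t=0$. Thus $uu'$ is a nonnegative combination of dimension-$d_1$ basis elements only. The same computation classifies each mixed product $uv$ (with $\mathrm{FPdim}(u)=d_1$, $\mathrm{FPdim}(v)=d_2$) into the dichotomy ``purely dimension $d_1$'' or ``purely dimension $d_2$,'' and likewise constrains products of two dimension-$d_2$ elements. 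In particular, the \emph{only} way a dimension-$d_2$ basis element $z$ can arise from two dimension-$d_1$ elements is inside some $uu^\ast$.

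This is where the main work lies. The plan is to combine this rigidity with the cyclic invariance of the fusion rules (Equation (\ref{cyclicinv})), which converts ``$z$ occurs in $uu^\ast$'' into ``$u$ occurs in $zu$'' and hence pins $zu$ down as a purely dimension-$d_1$ element containing $u$. Tracking these incidences --- equivalently, the traces of left multiplication by a self-dual dimension-$d_2$ element restricted to the dimension-$d_1$ and dimension-$d_2$ subspaces --- and comparing them against $\mathrm{FPdim}\bigl(\sum_{u}uu^\ast\bigr)$ should overdetermine the system and yield a non-integrality or sign contradiction, in the spirit of the dimension-counting arguments of Lemma \ref{lem:meh} and Lemma \ref{thebiglemma} (with Proposition \ref{evenprop} available to kill parity-violating cases). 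The hardest part is making this bookkeeping close uniformly in the number of orbits; I expect the cleanest route is to show that the fusion subring generated by the dimension-$d_1$ basis elements is a proper subring whose dimension spectrum is incompatible with $\gcd(d_1,d_2)=1$, thereby forcing a nontrivial invertible element after all and contradicting the triviality of $R_\mathrm{pt}$.
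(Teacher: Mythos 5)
Your opening steps are correct and track the paper's own proof closely: the single-dimension degenerate case, the coprimality of $d_1$ and $d_2$ extracted from $xx^\ast$, and the rigidity statement that $uu'$ is purely of dimension $d_1$ whenever $u,u'$ have dimension $d_1$ and $u'\neq u^\ast$ (this is the paper's ``$m_1=0$'' step, with the roles of $d_1$ and $d_2$ swapped). But the proof is not finished. Everything after ``this is where the main work lies'' is a plan rather than an argument, and the route you call cleanest does not close as stated: you want the subring generated by the dimension-$d_1$ basis elements to be \emph{proper}, but properness is precisely what is at stake, since a dimension-$d_2$ element $z$ may a priori occur in some $uu^\ast$, in which case that subring contains $z$ and no ``dimension spectrum'' contradiction is available. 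The trace/incidence bookkeeping is likewise never carried out, and nothing you write guarantees it terminates in a non-integrality or sign contradiction.

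The missing step is a single multiplicity computation, and it is exactly the paper's punchline. Suppose $c_{u,u^\ast}^z>0$ with $\mathrm{FPdim}(u)=d_1$ and $\mathrm{FPdim}(z)=d_2$. Cyclic invariance (Equation (\ref{cyclicinv})) gives $c_{z^\ast,u}^u=c_{u,u^\ast}^z>0$, while for $u'\neq u$ of dimension $d_1$ it gives $c_{z^\ast,u}^{u'}=c_{u,(u')^\ast}^z=0$ by your rigidity step; so $u$ is the only dimension-$d_1$ constituent of $z^\ast u$. Your mixed-product dichotomy then forces $z^\ast u$ to be purely of dimension $d_1$, hence $z^\ast u=c_{u,u^\ast}^z\,u$, and comparing Frobenius--Perron dimensions yields $c_{u,u^\ast}^z=d_2$. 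But then $d_1^2=\mathrm{FPdim}(uu^\ast)\geq 1+d_2^2>d_1^2$, a contradiction. (In the paper's labeling, where $d_1>d_2$, this is the step ``$xz=d_1x$ by comparing Frobenius-Perron dimensions, and moreover $c_{x,x^\ast}^z=d_1$, contradicting $d_1>d_2$.'') Only after this does your subring idea become viable: since now no dimension-$d_2$ element occurs in any product of two dimension-$d_1$ elements, $\mathbb{Z}1_R$ together with the span of the dimension-$d_1$ elements is a fusion subring with trivial pointed part and a single nontrivial dimension, and your own degenerate-case computation applied to it forces $d_1=1$, the final contradiction.
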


\begin{proof}
If $R=R_\mathrm{pt}$, we are done as $R$ was assumed to be nontrivial.  So consider the case $d_1=d_2\neq1$.  For any nontrivial $x\in B$, taking the Frobenius-Perron dimension of $xx^\ast$ gives $d_1^2=1+nd_1$ for some $n\in\mathbb{Z}_{\geq0}$ which has no positive integer solutions different from $1$.  Therefore $d_1>d_2$ without loss of generality if $1_R$ is the unique invertible element.  Taking the Frobenius-Perron dimension of $xx^\ast$ for $x\in B$ with $\mathrm{FPdim}(x)=d_1$,
\begin{align}
&&d_1^2&=1+n_1d_1+n_2d_2\label{eq1} \\
\Rightarrow&&1&=(d_1-n_1)d_1+(-n_2)d_2
\end{align}
with $n_1,n_2\in\mathbb{Z}_{\geq0}$.  A solution to such a linear Diophantine equation exists if and only if $d_1,d_2$ are coprime.  Now let $x,y\in B$ with $x\neq y$ such that $\mathrm{FPdim}(x)=\mathrm{FPdim}(y)=d_2$.  Taking the Frobenius-Perron dimension of $y^\ast x$,
\begin{equation}
d_2^2=m_1d_1+m_2d_2
\end{equation}
for some $m_1,m_2\in\mathbb{Z}_{\geq0}$.  Hence $m_1d_1\equiv0\pmod{d_2}$.  If $m_1>0$, then $m_1=\ell d_2$ for some $\ell\in\mathbb{Z}_{\geq1}$ since $\gcd(d_1,d_2)=1$ which would imply $d_2\geq\ell d_1$.  As $d_2<d_1$, we must conclude $m_1=0$.  Thus $0=c_{y^\ast,x}^{z^\ast}=c_{x,z}^y$ for any $z\in B$ with $\mathrm{FPdim}(z)=d_1$.  Since $y\neq x$ was arbitrary, $xz=d_1x$ by comparing Frobenius-Perron dimensions, and moreover $c_{x,x^\ast}^z=d_1$ contradicting the fact that $d_1>d_2$.  So we must conclude $R_\mathrm{pt}$ is nontrivial.
\end{proof}

\begin{note}
Lemma \ref{gcdlem} and its proof are attributed to \cite[Proposition 3.2]{alekseyev2023classification} whose conclusion is that $R$ is not simple.  As previously observed in loc.\ cit., this is an optimal result, in the sense that there exist integral fusion rings $(R,B)$ with $R_\mathrm{pt}$ trivial, and with basis elements of 4 distinct Frobenius-Perron dimensions.  The smallest such fusion ring is the character ring of the alternating group $A_5$, with Frobenius-Perron dimensions of basis elements $1,3,3,4,5$.
\end{note}

Back to the three-orbit problem, we may now assume $d_1=1$ without loss of generality.  Assume further that $d_2\neq1$.  Then by Section \ref{subsec:two}, $R_\mathrm{pt}\cong\mathbb{Z}_{C_3}$ or $R_\mathrm{pt}\cong\mathbb{Z}C_2^n$ for some $n\in\mathbb{Z}_{\geq2}$ such that $2^n-1$ is a Mersenne prime.  In the former case, $p=2$ and $\mathrm{rank}(R)=5$, thus $R$ is commutative by Proposition \ref{comcor}.  Moreover the automorphism $\phi$ is duality by Corollary \ref{cor:com}.  Therefore the two noninvertible basis elements $x,y$  satisfy $x=y^\ast$ and are fixed by multiplication by the invertible elements, hence $d_2^2=nd_2+3$ for some $n\in\mathbb{Z}_{\geq0}$.  But $d_2\in\mathbb{Z}_{\geq2}$, hence $d_2=3$ and $n=2$.  This forces $1=c_{x,y}^x=c_{x,y}^y=c_{x,x}^x$ by the fact that $xy$ is self-dual and the cyclic symmetry of the fusion rules.  Moreover $c_{x,x}^y=2$ by considering Frobenius-Perron dimensions.  The fusion of this ring is then determined by the fusion subring $\mathbb{Z}C_3$ acting trivially on $x,y$, and the following fusion rule for $x$, since the fusion rule for $y$ is its transpose.
\begin{equation}
\left[
\begin{array}{c|cc|cc}
0 & 0 & 0 & 0 & 1 \\\hline
0 & 0 & 0 & 0 & 1 \\
0 & 0 & 0 & 0 & 1 \\\hline
1 & 1 & 1 & 1 & 1 \\
0 & 0 & 0 & 2 & 1
\end{array}
\right]
\end{equation}
One may recognize this as the character ring of $C_7\rtimes C_3$, the unique non-abelian group of order $3\cdot7$ up to isomorphism.

\par Alternatively, $R_\mathrm{pt}\cong\mathbb{Z}C_2^n$ for some $n\in\mathbb{Z}_{\geq2}$ such that $p=2^n-1$ is a Mersenne prime.  As $p$ is odd, the ring $R$ is self-dual, and the action of $R_\mathrm{pt}$ on the $p$ non-invertible objects must be trivial, i.e.\ $x^2$ contains all invertible elements as summands.  Moreover $\mathrm{FPdim}(x)$ is a root of $x^2-kx-2^n$ for some $k\in\mathbb{Z}_{\geq0}$.  This bounds the fusion coefficients of $R$ by $\mathrm{FPdim}(x)\leq2^n$, producing a very restricted set of examples for each Mersenne prime $p$.

\begin{example}\label{ex2}
Assume $n=2$, thus $p=3$ and $k\in\{0,3\}$ in the notation introduced above.  Consider first the case $k=0$.   As $\mathrm{FPdim}(x)=2$ and $x^2$ contains all invertible elements, then $0=c_{x,x}^{\phi^2(x)}=c_{\phi(x),\phi(x)}^x=c_{x,\phi(x)}^{\phi(x)}$.  Similarly $c_{x,\phi^2(x)}^{\phi^2(x)}=0$, which determines the fusion rule for $x$ (Figure \ref{fig:so532}), which in turn determines the fusion rules for $R$.  We denote this fusion ring by $S_2$.  The only remaining option is $k=3$, thus $\mathrm{FPdim}(x)=4$.  If we set $a:=c_{x,x}^x$, $b:=c_{x,x}^{\phi(x)}=c_{\phi^2(x),\phi^2(x)}^x=c_{x,\phi^2(x)}^{\phi^2(x)}$, then
\begin{equation}
3-(a+b)=c_{x,x}^{\phi^2(x)}=c_{\phi(x),\phi(x)}^x=c_{x,\phi(x)}^{\phi(x)}.
\end{equation}
Along with the fact that $c_{\phi(x),\phi(x)}^{\phi^2(x)}=c_{x,x}^{\phi(x)}$, the choice of nonnegative integers $a,b$ with $a+b\leq3$ determines the fusion rule for $x$, and the commutation relation of $x$ and $\phi(x)$ implies $a=0$ and $b=1$.   We denote this ring by $S_4$.  The fusion rules for $x$ in the cases $k=0$ and $k=3$ are given in Figure \ref{fig:g21e}.

\begin{figure}[H]
\centering
\begin{subfigure}{.5\textwidth}
  \centering
\begin{equation*}
\left[
\begin{array}{c|ccc|ccc}
0 & 0 & 0 & 0 & 1 & 0 & 0 \\\hline
0 & 0 & 0 & 0 & 1 & 0 & 0 \\
0 & 0 & 0 & 0 & 1 & 0 & 0 \\
0 & 0 & 0 & 0 & 1 & 0 & 0 \\\hline
1 & 1 & 1 & 1 & 0 & 0 & 0 \\
0 & 0 & 0 & 0 & 0 & 0 & 2 \\
0 & 0 & 0 & 0 & 0 & 2 & 0
\end{array}
\right]
\end{equation*}
  \caption{$k=0$}
  \label{fig:so532}
\end{subfigure}%
\begin{subfigure}{.5\textwidth}
  \centering
\begin{equation*}
\left[
\begin{array}{c|ccc|ccc}
0 & 0 & 0 & 0 & 1 & 0 & 0 \\\hline
0 & 0 & 0 & 0 & 1 & 0 & 0 \\
0 & 0 & 0 & 0 & 1 & 0 & 0 \\
0 & 0 & 0 & 0 & 1 & 0 & 0 \\\hline
1 & 1 & 1 & 1 & 0 & 1 & 2 \\
0 & 0 & 0 & 0 & 1 & 2 & 1 \\
0 & 0 & 0 & 0 & 2 & 1 & 1
\end{array}
\right]
\end{equation*}
  \caption{$k=3$}
  \label{fig:so52}
\end{subfigure}
\caption{Fusion rule for $x$ in the rings $S_2$ and $S_4$}
\label{fig:g21e}
\end{figure}  
\end{example}

\par Lastly, assume $R\cong\mathbb{Z}G$ is pointed.  These groups are a very small subset of a larger family (finite or infinite) known as \emph{almost homogenous} groups, i.e.\ those groups $G$ such that the action of $\mathrm{Aut}(G)$ on $G$ has at most 3 orbits.  All such groups (finite or infinite) were classified in \cite{MR1484565}.  If $|G|$ is divisible by two distinct primes, $p,q$, then $|G|=pq$ and $G$ is nonabelian with $q\equiv1\pmod{p}$ without loss of generality, or else $G$ is cyclic and has elements with 4 distinct orders, which lie in distinct $\phi$-orbits.  The Sylow theorems imply the unique $q$-Sylow subgroup $C_q$ is characteristic, hence $q=3$ and $\phi$ must be the inverse map (duality).  But $S_3\cong C_3\rtimes C_2$ has elements of order 2, so no such example exists.  The only other possibility is that $|G|=q^n$ for some $n\in\mathbb{Z}_{\geq1}$, and that $q^n=2p+1$ for some prime $q$.  In other words, $p$ is odd.  By McKay's proof of the Cauchy Theorem, the number of elements $g\in G$ of order $q$ is congruent to $-1$ modulo $q$, thus $G$ is a $q$-group of exponent $q$.  If $G$ is abelian, then $G$ is an elementary abelian $q$-group.  If $G$ is nonabelian, then $G$ is a (generalized) Heisenberg group \cite[Section 4]{MR1484565} of exponent $q$.

\par The simplest examples of such finite groups $G$ are cyclic groups of order $q=2p+1$ where $p,q$ are primes.  Primes with this relation have a long history, and with this labelling $q$ is called a \emph{safe} prime and $p$ is called a \emph{Sophie Germain} prime.  We have $\mathrm{Aut}(G)\cong C_{2p}$ and the fixed-point subgroup of any nontrivial automorphism is trivial since $q$ is prime.  There are exactly $p-1$ automorphisms $\phi$ of $C_{2p}$ of order $p$, each having 3 $\phi$-orbits.

\par It should be noted that there is no known infinite family of groups of this type due to the condition $p=(1/2)(q^n-1)$.  For $q=3$, the smallest four primes $p$ are $13,$ $1093$, $797161$, and $3754733257489862401973357979128773$.





\section{Integer formal codegrees}

\par Fusion rings with integer formal codegrees are highly restrictive by Equation (\ref{codegrees}).  In particular, there exist only finitely many lists of positive integers of a given length satisfying such an equation by the analogous classical argument bounding the number of finite groups with a given number of conjugacy classes \cite{Landau1903}.  Therefore the Frobenius-Perron dimension of such a fusion ring of fixed rank is bounded.  The result then follows since there exist only finitely many fusion rings of bounded Frobenius-Perron dimension.  One can refer to \cite[Lemma 3.14]{MR3486174} for a proof of this fact which is stated in terms of fusion categories, but uses no categorical data in the proof.  The objective of this section is to use these bounds to classify fusion rings with fixed-point-free automorphisms of prime order of rank less than $9$, which are listed in Figure \ref{fig:A}.  Note that all such fusion rings must be commutative by Proposition \ref{comcor}, so this is an inoccuous assumption moving forward.

\subsection{Bounds on $\mathrm{FPdim}$}

\par Assume $(R,B)$ is a commutative fusion ring with integer formal codegrees and a fixed-point free automorphism $\phi$ of prime order $p\in\mathbb{Z}_{\geq2}$.  There are $n:=(\mathrm{rank}(R)-1)/p$ nontrivial $\phi$-orbits of $\mathrm{Irr}(R)$ and thus positive integers $f_1,\ldots,f_n$ such that
\begin{equation}
\dfrac{1}{p\mathrm{FPdim}(R)}+\sum_{j=1}^n\dfrac{1}{f_j}=\dfrac{1}{p}.
\end{equation}  
It is known \cite{192178} that in this case, $p\mathrm{FPdim}(R)$, and moreover all $f_j$, are bounded by $A_{n+1}$, where $A_1=p$ and inductively $A_{j+1}=A_j(A_j+1)$ for $2\leq j\leq n$.  We will explain how this bounded provides a classification of fusion rings in a few of the cases already described above, to prepare for a novel classification in Section \ref{nequals3}.  For example, when $n=1$, $p\mathrm{FPdim}(\mathcal{C})\leq p(p+1)$, thus $\mathrm{FPdim}(\mathcal{C})\leq p+1=\mathrm{rank}(R)$, thus $R\cong\mathbb{Z}G$ for a finite group $G$, as we proved without the assumption of integer formal codegrees or commutativity in Section \ref{subsec:two}.

\par For $n=2$, $p\mathrm{FPdim}(\mathcal{C})\leq p(p+1)(p^2+p+1)$.  Hence $\mathrm{FPdim}(\mathcal{C})\leq(p+1)(p^2+p+1)$.  When $p=2$ this gives a bound of $21$.  The only possible formal codegrees by an exhaustive search are
\begin{align}
[\mathrm{FPdim}(R),f_1,f_2]\in\{[5,5,5],\text{ }[9,9,3],[21,7,3]\}.
\end{align}
But there must also exist positive integer Frobenius-Perron dimensions $a,b$ such that $\mathrm{FPdim}(R)=1+2a^2+2b^2$.  There is a unique solution $a=b=1$ for $\mathrm{FPdim}(R)=5$ which is realized by the fusion ring $\mathbb{Z}C_5$, a unique solution $a=1,b=3$ for $\mathrm{FPdim}(R)=21$ which is realized by the fusion ring $R_{C_7\rtimes C_3}$, and no solution for $a,b$ when $\mathrm{FPdim}(R)=9$.  When $p=3$, we have $\mathrm{FPdim}(R)\leq52$.  The only possible formal codegrees by an exhaustive search are
\begin{align}
[\mathrm{FPdim}(R),f_1,f_2]\in\{&[7,7,7],[10,10,5],[16,16,4], \\
&[28,14,4],[40,8,5],[52,13,4]\}.
\end{align}
But there must also exist positive integer Frobenius-Perron dimensions $a,b$ such that $\mathrm{FPdim}(R)=1+3a^2+3b^2$.  There is a unique solution $a=b=1$ for $\mathrm{FPdim}(R)=7$ realized by the fusion ring $\mathbb{Z}C_7$, a unique solution $a=1,b=2$ for $\mathrm{FPdim}(R)=16$ realized by the fusion ring $S_2$ and a unique solution $a=1,b=4$ for $\mathrm{FPdim}(R)=52$ realized by the fusion ring $S_4$.  There is a unique, but spurious solution for $\mathrm{FPdim}(R)=40$: $a=2$ and $b=3$, which violates Lemma \ref{gcdlem} and so there are no fusion rules with these dimensions and formal codegrees.  Again, this was known from Section \ref{subsec:three} without the assumption of integral formal codegrees or commutativity.  

\subsection{Four $\phi$-orbits; the case $n=3$}\label{nequals3}

\par In this case
\begin{equation}
\mathrm{FPdim}(\mathcal{C})\leq(p + 1) (p^2 + p + 1) (p^4 + 2 p^3 + 2 p^2 + p + 1).
\end{equation}
For $p=2$ this bound is $903$.  There are $14$ distinct lists of feasible formal codegrees $[\mathrm{FPdim}(R),f_1,f_2,f_3]$, and checking for the existence of positive integers $a,b,c$ such that $\mathrm{FPdim}(R)=1+2(a^2+b^2+c^2)$, we find $11$ distinct pairings of feasible formal codegrees and triples $a,b,c$.  Of those for which $a,b,c$ are not distinct, we can discard any solution for which one of $a,b,c$ is not $1$ by Lemma \ref{gcdlem}.  The feasible solution $[\mathrm{FPdim}(R),f_1,f_2,f_3]=[147, 3, 7, 49]$ with $[a,b,c]=[1,6,6]$ cannot occur since $\mathrm{rank}(R_\mathrm{pt})=3$ which is coprime to $2$, and there exist only basis elements of two distinct Frobenius-Perron dimensions.  Hence $\mathrm{FPdim}(x)=3$ for any noninvertible $x\in B$ by measuring the Frobenius-Perron dimension of $xx^\ast$.  The feasible solution $[\mathrm{FPdim}(R),f_1,f_2,f_3]=[315, 3, 7, 45]$ with $[a,b,c]=[2,3,12]$ is spurious since this would have a fusion subring of rank $5$ with $3$ distinct Frobenius-Perron dimensions of basis elements, which does not exist from Section \ref{exsec}.  Lastly, the feasible formal codegrees $[\mathrm{FPdim}(R),f_1,f_2,f_3]=[55,15,11,3]$ with unique dimensions $[a,b,c]=[1,1,5]$ violate Lemma \ref{formlem} as $R_\mathrm{pt}$ must act trivially on $B$ in this case.  The remaining feasible solutions are listed in Figure \ref{fig:aaa}.
\begin{figure}[H]
\centering
\begin{equation*}
\begin{array}{|cc|}
\hline \text{Formal codegrees} & \text{Basis }\mathrm{FPdims}\\\hline\hline
7, 7,7, 7,7, 7,7 		& 1,1,1,1,1,1,1 \\
39, 13,13, 13,13, 3,3 	& 1,1,1,3,3,3,3 \\
55, 11,11, 5,5, 5,5 	& 1,1,1,1,1,5,5 \\
119, 51,51, 7,7, 3,3	& 1,1,1,3,3,7,7\\\hline
\end{array}
\end{equation*}
    \caption{Feasible rank $7$ fusion ring data w.\ fixed-point-free automorphism of order $2$ and integer formal codegrees}%
    \label{fig:aaa}%
\end{figure}
The case $\mathrm{FPdim}(R)=7=\mathrm{rank}(R)$ must have $R\cong\mathbb{Z}C_7$.  For the cases which are not integral groups rings, note that $\mathrm{rank}(R_\mathrm{pt})$ is coprime to the number of noninvertible $x\in B$ of any fixed Frobenius-Perron dimension, hence $1=c_{g,x}^x=c_{x,x^\ast}^{g^{-1}}=c_{x,x^\ast}^g$ for any invertible $g\in B$ by the cylic invariance of the fusion rules (Equation \ref{cyclicinv}) and the fact that $xx^\ast$ is self-dual.

\par Assume $\mathrm{FPdim}(R)=39$ and let $x,y\in B$ be distinct such that $x\neq y^\ast$ and $\mathrm{FPdim}(x)=\mathrm{FPdim}(y)=3$.  Ordering the noninvertible basis elements $x,x^\ast,y,y^\ast$, the fusion rule for $x$ is
\begin{equation}
\left[\begin{array}{c|cc|cc|cc}
0 & 0 & 0 & 0 & 1 & 0 & 0 \\\hline
0 & 0 & 0 & 0 & 1 & 0 & 0 \\
0 & 0 & 0 & 0 & 1 & 0 & 0 \\\hline
1 & 1 & 1 & a & a & 1-a & 1-a \\
0 & 0 & 0 & b & a & 3-n & c \\\hline
0 & 0 & 0 & c & 1-a & d & 2+a-c-d \\
0 & 0 & 0 & 3-n & 1-a & 2a+b+c-d-1 & d
\end{array}\right]
\end{equation}
where $a\in\{0,1\}$, $b,c,d\in\{0,1,2,3\}$, and $n:=a+b+c$, using only the cyclic symmetry of the fusion rules and the Frobenius-Perron dimension constraints.  Commutativity of the above with its transpose determines $a=0$, and $[b,c,d]\in\{[1,0,0],[2,0,1]\}$ up to a relabeling $y\leftrightarrow y^\ast$.  The case $[b,c,d]=[1,0,0]$ is erroneous since this implies $c_{y,y}^y=c_{y,y^\ast}^{y^\ast}=c_{y,y^\ast}^y=1$ forcing $c_{y,y}^{y^\ast}=-1$ by Frobenius-Perron dimension comparison.  The fusion rules for $x$ and $y$ are then determined as
\begin{align}
\left[\begin{array}{c|cc|cc|cc}
0 & 0 & 0 & 0 & 1 & 0 & 0 \\\hline
0 & 0 & 0 & 0 & 1 & 0 & 0 \\
0 & 0 & 0 & 0 & 1 & 0 & 0 \\\hline
1 & 1 & 1 & 0 & 0 & 1 & 1 \\
0 & 0 & 0 & 2 & 0 & 1 & 0 \\\hline
0 & 0 & 0 & 0 & 1 & 1 & 1 \\
0 & 0 & 0 & 1 & 1 & 0 & 1
\end{array}\right]\qquad\left[\begin{array}{c|cc|cc|cc}
0 & 0 & 0 & 0 & 0 & 0 & 1 \\\hline
0 & 0 & 0 & 0 & 0 & 0 & 1 \\
0 & 0 & 0 & 0 & 0 & 0 & 1 \\\hline
0 & 0 & 0 & 1 & 1 & 0 & 1 \\
0 & 0 & 0 & 1 & 0 & 1 & 1 \\\hline
1 & 1 & 1 & 1 & 1 & 0 & 0 \\
0 & 0 & 0 & 0 & 1 & 2 & 0
\end{array}\right]
\end{align}
These are the fusion rules of the character ring of the unique nonabelian group of order $39$ up to isomorphism, $C_{13}\rtimes C_3$.

\par  The only unknown fusion coefficient for a fusion ring under our current assumptions with $\mathrm{FPdim}(R)=55$ is $c_{x,x}^x=c_{x,x^\ast}^{x^\ast}$ since $c_{x,x}^{x^\ast}=5-c_{x,x}^x$ and $c_{x,x^\ast}^x=4-c_{x,x}^x$.  But the fusion rule for $x$ must commute with its transpose which forces $c_{x,x}^x=2$, hence the fusion rule for noninvertible $x$ in $R$ with $\mathrm{FPdim}(R)=55$ is
\begin{equation}
\left[\begin{array}{c|cc|cc|cc}
0 & 0 & 0 & 0 & 0 & 0 & 1 \\\hline
0 & 0 & 0 & 0 & 0 & 0 & 1 \\
0 & 0 & 0 & 0 & 0 & 0 & 1 \\\hline
0 & 0 & 0 & 0 & 0 & 0 & 1 \\
0 & 0 & 0 & 0 & 0 & 0 & 1 \\\hline
1 & 1 & 1 & 1 & 1 & 2 & 2 \\
0 & 0 & 0 & 0 & 0 & 3 & 2
\end{array}\right].
\end{equation}
These are the fusion rules of the character ring of the unique nonabelian group of order $55$ up to isomorphism, $C_{11}\rtimes C_5$.

\par In the case $\mathrm{FPdim}(R)=119$, the basis elements of Frobenius-Perron dimensions $1$ and $3$ form a fusion subring isomorphic to $R_{C_7\rtimes C_3}$.  Indeed the only nonnegative integer solutions to $3^2=n_1(1)+n_2(3)+n_3(7)$ where $n_1\in\{0,3\}$ have $n_3=0$.  If $x,y\in B$ have $\mathrm{FPdim}(x)=3$ and $\mathrm{FPdim}(y)=7$, then the only unknown fusion coefficient for $x$ is $c_{x,y}^y=c_{x,y^\ast}^{y^\ast}$ as by dimension comparison, $c_{x,y}^{y^\ast}=c_{x,y^\ast}^y=3-c_{x,y}^y$.   Meanwhile, measuring the dimension of $yy^\ast$ and noting $c_{y,y^\ast}^{x}=c_{y,y^\ast}^{x^\ast}$ and $c_{y,y^\ast}^{y}=c_{y,y^\ast}^{y^\ast}$, the only solution for $7^2=3+2c_{y,y^\ast}^{x}(3)+2c_{y,y^\ast}^y(7)$ is $c_{y,y^\ast}^{x}=3$ and $c_{y,y^\ast}^y=2$ which determines the fusion of $x$ since $3=c_{y,y^\ast}^{x}=c_{x^\ast,y^\ast}^{y^\ast}=c_{x,y}^{y}$.  By dimension comparison, $c_{y,y}^{y^\ast}=5$ and all other fusion coefficients for $y$ are zero except the ones involving invertible basis elements, giving the following fusion for $x$ and $y$.
\begin{equation}\label{matrices}
\left[\begin{array}{c|cc|cc|cc}
0 & 0 & 0 & 0 & 1 & 0 & 0 \\\hline
0 & 0 & 0 & 0 & 1 & 0 & 0 \\
0 & 0 & 0 & 0 & 1 & 0 & 0 \\\hline
1 & 1 & 1 & 1 & 1 & 0 & 0 \\
0 & 0 & 0 & 2 & 1 & 0 & 0 \\\hline
0 & 0 & 0 & 0 & 0 & 3 & 0 \\
0 & 0 & 0 & 0 & 0 & 0 & 3
\end{array}\right]
\qquad
\left[\begin{array}{c|cc|cc|cc}
0 & 0 & 0 & 0 & 0 & 0 & 1 \\\hline
0 & 0 & 0 & 0 & 0 & 0 & 1 \\
0 & 0 & 0 & 0 & 0 & 0 & 1 \\\hline
0 & 0 & 0 & 0 & 0 & 0 & 3 \\
0 & 0 & 0 & 0 & 0 & 0 & 3 \\\hline
1 & 1 & 1 & 3 & 3 & 2 & 2 \\
0 & 0 & 0 & 0 & 0 & 5 & 2
\end{array}\right]
\end{equation}
The fusion in (\ref{matrices}) along with their duals fail to be associative, hence there is no fusion ring of this type.


\begin{figure}[H]
\centering
\begin{equation*}
\begin{array}{|cccccc|}
\hline \mathrm{Rank} & \mathrm{Ring} & \mathrm{Prime} & \mathrm{Orbits} & \mathrm{FPdim} & \text{Basis }\mathrm{FPdims}\\\hline\hline
1 & - &- & -& - & - \\\hline
2 & - &- & -& - & - \\\hline
3 & \mathbb{Z}C_3 & 2& 2& 3 & 1,1,1 \\\hline
4 & \mathbb{Z}C_2^2 & 3&2& 4 & 1,1,1,1 \\\hline
5 & \mathbb{Z}C_5 & 2&3& 5 & 1,1,1,1,1 \\
   & R_{C_7\rtimes C_3} & 2&3& 21 & 1,1,1,3,3  \\\hline
6 & - & -& -& - &-  \\\hline
7 & \mathbb{Z}C_7 & 2,3&4,3 &7 & 1,1,1,1,1,1,1\\
  & S_2  & 3& 3& 16 &  1,1,1,1,2,2,2\\
  & R_{C_{13}\rtimes C_3} & 2&4& 39 & 1,1,1,3,3,3,3 \\
  & S_4 & 3& 3& 52 & 1,1,1,1,4,4,4\\
  & R_{C_{11}\rtimes C_5} & 2&4& 55 & 1,1,1,1,1,5,5\\\hline
8 & \mathbb{Z}C_2^3 & 7&2& 8 & 1,1,1,1,1,1,1,1  \\\hline
\end{array}
\end{equation*}
    \caption{Fusion rings $R$ w.\ fixed-point-free automorphisms of prime order, integer formal codegrees, and $\mathrm{rank}(R)<9$}%
    \label{fig:A}%
\end{figure}


\section{Fusion categories}

\par The primary interest in fusion rings is their realization as Grothendieck rings of \emph{fusion categories}, which have profound applications both representation theory and mathematical physics.  A fusion category is a fusion ring along with solutions to coherence conditions on the associative relations of the ring, typically referred to as \emph{pentagon equations} \cite[Section 2.1]{tcat}.  When the fusion coefficients of a fusion ring are larger than $1$ or the rank is large, the complexity of these equations becomes untenable and one is forced to develop other tools to determine whether such a ring is \emph{categorifiable}, i.e.\ realizable as the Grothendieck ring of a fusion category.  For example, all categorifications of the integral group rings $\mathbb{Z}G$ for a finite group $G$ are of the form $\mathrm{Vec}_G^\omega$ for some $\omega\in H^3(G,\mathbb{C}^\times)$, but if there exists a unique noninvertible basis element, this question is unanswered after two decades of research (e.g. \cite{MR3167494}\cite{MR3635673}\cite{MR3229513}\cite{MR1997336}\cite{schopieray2022categorification} and references within).  In the remainder of this section we will refer the reader to the standard textbook \cite{tcat} for definitions and basic results in this subject.

\subsection{The categorifications of small rank}  Here we will determine which of the fusion rings in Figure \ref{fig:A} are categorifiable.  This is equivalent to classifying all fusion categories with fixed-point-free fusion automorphisms of prime order and rank less than $9$, since the formal codegrees of the Grothendieck ring of such a category must be integers by Theorem \ref{thm1}(1) and \cite[Corollary 2.15]{ost15}.  If a fusion ring $R$ in Figure \ref{fig:A} is an integral group ring, its categorifications have been discussed \emph{ad nauseam} and are the categories of finite-dimensional $G$-graded vector spaces $\mathrm{Vec}_G^\omega$ twisted by a $3$-cocycle $\omega$.  If a fusion ring $R\neq R_\mathrm{pt}$ in Figure \ref{fig:A} has Frobenius-Perron dimension $pq$ for distinct primes $pq$, then there is a unique categorification by the representation category of the unique nonabelian finite group of order $pq$ \cite[Theorem 6.3]{MR2098028}.  The only fusion rings in Figure \ref{fig:A} not of these forms are $S_2$ and $S_4$.

\par We will demonstrate that neither $S_2$ or $S_4$ are categorifiable by a fusion category $\mathcal{C}$ in the usual manner of showing their \emph{double} $\mathcal{Z}(\mathcal{C})$, a modular fusion category in the sense of \cite[Section 8.14]{tcat}, cannot exist.  The modular fusion category $\mathcal{Z}(\mathcal{C})$ is related to the fusion category $\mathcal{C}$ by the restriction tensor functor $F:\mathcal{Z}(\mathcal{C})\to\mathcal{C}$ and its adjoint $I:\mathcal{C}\to\mathcal{Z}(\mathcal{C})$ \cite[Section 9.2]{tcat}.  The formal codegrees are of particular importance  in such arguments as they control the decomposition of the induction of the tensor unit $I(\mathbbm{1}_\mathcal{C})\in\mathcal{Z}(\mathcal{C})$ into simple summands \cite[Theorem 2.13]{ost15}.  Note that to use this result, a \emph{spherical} structure is required.  But such a structure is innate for any fusion category with $\mathrm{FPdim}(\mathcal{C})\in\mathbb{Z}$ \cite[Corollary 9.6.6]{tcat} because such fusion categories are \emph{pseudounitary} \cite[Definition 9.4.4]{tcat}.  In this case we have no reason to discuss \emph{categorical} dimensions as they may be replaced with Frobenius-Perron dimensions without any loss of generality.

\begin{proposition}
There is no categorification of the fusion ring $S_2$.
\end{proposition}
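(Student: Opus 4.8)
The plan is to suppose a fusion category $\mathcal{C}$ with $K_0(\mathcal{C})\cong S_2$ exists and to obstruct its Drinfeld center. Since $\mathrm{FPdim}(\mathcal{C})=16$ and every $\mathrm{FPdim}(X)\in\mathbb{Z}$ by Theorem \ref{thm1}(1), $\mathcal{C}$ is integral, hence pseudounitary and canonically spherical \cite[Corollary 9.6.6]{tcat}, and $\mathcal{Z}(\mathcal{C})$ is an integral modular fusion category with $\mathrm{FPdim}(\mathcal{Z}(\mathcal{C}))=\mathrm{FPdim}(\mathcal{C})^2=256$. The first step is to read off the formal codegrees of $S_2$ from its characters: as $S_2$ is commutative, every $\varphi\in\mathrm{Irr}(S_2)$ is a one-dimensional $\chi$ with $f_\chi=\sum_{b\in B}|\chi(b)|^2$, and $\chi(x_i)^2=\chi(x_i^2)=\sum_{g}\chi(g)$ equals $4$ or $0$ according as $\chi|_{R_\mathrm{pt}}$ is trivial or not. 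This produces four characters with $f_\chi=16$ and three with $f_\chi=4$, consistent with Theorem \ref{thm1}(4) and with Equation (\ref{codegrees}) via $4/16+3/4=1$.

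Next I would invoke \cite[Theorem 2.13]{ost15}, which expresses the induction $I(\mathbbm{1}_\mathcal{C})$ of the unit along the forgetful functor $F:\mathcal{Z}(\mathcal{C})\to\mathcal{C}$ as a multiplicity-free sum $\bigoplus_{i=1}^{7}Z_i$ of simple objects with $\mathrm{FPdim}(Z_i)=\mathrm{FPdim}(\mathcal{C})/f_{\varphi_i}$; the codegrees above force dimensions $1,1,1,1,4,4,4$. Writing $A:=I(\mathbbm{1}_\mathcal{C})$, this is the canonical Lagrangian (connected commutative étale) algebra of $\mathcal{Z}(\mathcal{C})$, so each summand is isotropic with trivial ribbon twist $\theta_{Z_i}=1$; the four invertible summands form a group isomorphic to $\widehat{U(\mathcal{C})}\cong\widehat{C_2^2}$ (here $\mathcal{C}_\mathrm{ad}=\mathcal{C}_\mathrm{pt}$ gives $|U(\mathcal{C})|=16/4=4$), and the remaining three simples $Y_1,Y_2,Y_3$ have dimension $4$. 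Comparing $FI(\mathbbm{1}_\mathcal{C})=\bigoplus_{X\in\mathrm{Irr}(\mathcal{C})}X\otimes X^\ast$ with $\bigoplus_i F(Z_i)$ and using the adjunction between $F$ and $I$ (so the multiplicity of $\mathbbm{1}_\mathcal{C}$ in $F(Z_i)$ equals that of $Z_i$ in $A$), I would pin down that each $F(Y_i)$ is supported on $\mathcal{C}_\mathrm{pt}$ and contains $\mathbbm{1}_\mathcal{C}$ exactly once, whence $F(Y_i)=\bigoplus_{g\in C_2^2}g$ once the order-three symmetry of the codegree data is used to split evenly.

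The contradiction is then to be extracted inside $\mathcal{Z}(\mathcal{C})$. The invertible summands span a Tannakian subcategory $\mathcal{E}\cong\mathrm{Rep}(C_2^2)$, and condensing its canonical étale algebra $\mathcal{O}=\bigoplus_{\hat a}\hat a$ (of Frobenius--Perron dimension $4$) produces an integral modular category $\mathcal{M}$ of dimension $256/4^2=16$ in which $A$ descends to a Lagrangian algebra of dimension $16/4=4$; since $\mathcal{M}$ is integral, $\mathcal{M}\cong\mathcal{Z}(\mathcal{N})$ for a \emph{pointed} $\mathcal{N}$ with $\mathrm{FPdim}(\mathcal{N})=4$. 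I would then transport the three dimension-$4$ objects $Y_i$ and their trivial twists through this condensation and show they cannot occupy the non-unit part of a Lagrangian algebra in the explicitly known modular data of $\mathcal{Z}(\mathrm{Vec}_{C_2^2}^\omega)$ or $\mathcal{Z}(\mathrm{Vec}_{C_4}^\omega)$. In parallel I would exploit that $\mathcal{C}$ is a faithful $C_2^2$-extension of $\mathcal{C}_e=\mathcal{C}_\mathrm{pt}\cong\mathrm{Vec}_{C_2^2}^\omega$ whose three nontrivial components each contain a single simple $x_i$ with $x_ix_i^\ast=\bigoplus_{g}g$, so that each $\mathcal{C}_e\oplus\mathcal{C}_{\deg(x_i)}$ is a Tambara--Yamagami category $\mathrm{TY}(C_2^2)$, and the obstruction to gluing the three Tambara--Yamagami data into one extension is the governing cohomological invariant.

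The main obstacle is precisely this last step: ruling out the modular structure requires controlling the ribbon twists and braiding of the $Y_i$ -- equivalently the non-degeneracy of the $S$-matrix of $\mathcal{Z}(\mathcal{C})$, or the Lagrangian condition on $A$ -- rather than mere Frobenius--Perron dimensions. Indeed the dimension and trivial-twist bookkeeping alone is consistent, since for example $256=4\cdot 1+3\cdot 16+204$ admits harmless completions by additional simple objects, so a purely numerical count cannot close the argument. Thus the crux is to show that the full modular data forced on $\mathcal{Z}(\mathcal{C})$ by the decomposition of $I(\mathbbm{1}_\mathcal{C})$, together with $\mathcal{C}$ arising as the category of $A$-modules, is genuinely inconsistent, which I expect to follow either from the reduction to dimension $16$ above or from the Tambara--Yamagami gluing obstruction.
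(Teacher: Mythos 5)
Your setup is sound and matches the paper's: the formal codegrees $16,16,16,16,4,4,4$ are correct, and by \cite[Theorem 2.13]{ost15} the induction $I(\mathbbm{1}_\mathcal{C})$ decomposes into simples of dimensions $1,1,1,1,4,4,4$, all with trivial ribbon twist. But the argument stops exactly where a proof is required: you never derive a contradiction, and you say so yourself (``the main obstacle is precisely this last step \dots which I expect to follow''). Both of your proposed closings --- condensing the Tannakian subcategory $\mathrm{Rep}(C_2^2)$ and then analyzing Lagrangian algebras in a dimension-$16$ modular category, or a Tambara--Yamagami gluing obstruction --- are left as expectations, and neither is routine: the first amounts to working through the group-theoretical realizations of a dimension-$16$ category (the kind of case analysis the paper reserves for $S_4$, where the group is essentially unique), and the second requires the extension theory of \cite{MR2677836}, which the paper explicitly declines to use. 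A smaller point: your claim that each dimension-$4$ summand satisfies $F(Y_i)=\bigoplus_g g$ is not justified by the ``order-three symmetry'' you invoke; adjunction only forces $F(Y_i)$ to be $\mathbbm{1}_\mathcal{C}$ plus three invertibles counted with multiplicity, and the option $F(Y_i)=\mathbbm{1}_\mathcal{C}\oplus 2g_k\oplus g_\ell$ is not excluded by anything you wrote (nor does the paper need to exclude it).

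The key idea you are missing --- and it refutes your assertion that ``a purely numerical count cannot close the argument'' --- is to induct the \emph{nontrivial} invertibles rather than only the unit. By \cite[Proposition 9.2.2]{tcat}, $F(I(g_j))=7g_j+3\sum_{k\neq j}g_k$, and by adjunction the multiplicity of a dimension-$4$ summand $Z_i\subset I(\mathbbm{1}_\mathcal{C})$ in $I(g_j)$ equals the multiplicity of $g_j$ in $F(Z_i)$. Since the four invertible summands of $I(\mathbbm{1}_\mathcal{C})$ all have forgetful image $\mathbbm{1}_\mathcal{C}$, the three $F(Z_i)$ together equal $3\,\mathbbm{1}_\mathcal{C}+3(g_1+g_2+g_3)$; after ruling out $F(Z_i)=\mathbbm{1}_\mathcal{C}\oplus 3g_k$ (which by adjunction would force $g_k$ to appear with multiplicity at least $9>7$ in $F(I(g_k))$), it follows that for each $j\neq0$ the objects $Z_1,Z_2,Z_3$ occur in $I(g_j)$ with total multiplicity $3$, contributing $12$ to $\mathrm{Tr}(\theta_{I(g_j)})$ with trivial twists. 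But $\mathrm{Tr}(\theta_{I(g_j)})=0$ for $g_j\neq\mathbbm{1}_\mathcal{C}$ by \cite[Theorem 2.5]{ost15}, while $\mathrm{FPdim}(I(g_j))=16$ leaves only dimension $4$ for the remaining summands, whose twists are roots of unity; the triangle inequality then gives $12\leq4$, a contradiction. This closes the proof using precisely the numerical bookkeeping you set up, with no condensation or extension theory needed.
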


\begin{proof}
Let $\mathcal{C}$ be a categorification of $S_2$, which must be pseudounitary and moreover spherical \cite[Corollary 9.6.6]{tcat}, whose simple objects we label \begin{equation}
g_0:=\mathbbm{1}_\mathcal{C},g_1,g_2,g_3,X_1,X_2,X_3
\end{equation}
with $X_j$ non-invertible and the invertible simple objects forming the group $C_2^2$.  The formal codegrees of $S_2$ are $16,16,16,16,4,4,4$, hence the simple summands of $I(g_0)$ have dimensions $1,1,1,1,4,4,4$ by \cite[Theorem 2.13]{ost15} and all have trivial ribbon twist \cite[Proposition 3.11]{schopieray2022categorification}.  Using \cite[Proposition 9.2.2]{tcat}, we compute that
\begin{equation}\label{ekwa0}
F(I(g_j))=7g_j+3\sum_{k\neq j}g_k.
\end{equation}
Let $Z\subset I(g_0)$ satisfy $\mathrm{FPdim}(Z)=4$.  The adjoint relation of $F$ and $I$ gives 
\begin{equation}
\dim_\mathbb{C}\mathrm{Hom}(I(g_0),Z)=\dim_\mathbb{C}\mathrm{Hom}(g_j,F(Z))=1
\end{equation}
by \cite[Theorem 2.13]{ost15} since $S_2$ is a commutative fusion ring.  Label the three simple summands $Z_1,Z_2,Z_3\subset I(g_0)$ with $\mathrm{FPdim}(Z_j)=4$ for $1\leq j\leq3$.  Note that if $F(Z_j)=g_0+3g_k$ for some $k\neq0$, then
\begin{equation}
\dim_\mathbb{C}(Z_j,I(g_k))=\dim_\mathbb{C}(F(Z_j),g_k)=3,
\end{equation}
thus $\dim_\mathbb{C}\mathrm{Hom}(g_j,I(g_j))\geq9$, violating Equation (\ref{ekwa0}).  Hence either $2Z_k\oplus Z_\ell\subset I(g_j)$ or $Z_1\oplus Z_2\oplus Z_3\subset I(g_j)$ for some $k\neq\ell\in\{1,2,3\}$ so that Equation (\ref{ekwa0}) is satisfied.
But since $\theta_{Z_j}=1$ for $j\in\{1,2,3\}$, in either case by \cite[Theorem 2.5]{ost15},
\begin{equation}
0=\mathrm{Tr}(\theta_{I(g_j)})=12+\sum_k\theta_{Y_k}\mathrm{FPdim}(Y_k)
\end{equation}
where $Y_k$ are the unknown simple summands of $I(g_j)$ and $\theta_{Y_k}$ their ribbon twists which are roots of unity.  This would subsequently imply $12\leq\sum_k|\mathrm{FPdim}(Y_k)|=4$ by the triangle inequality as $\mathrm{FPdim}(I(g_j))=16$.  So no categorification of $S_2$ can exist.
\end{proof}

\begin{note}
One could also note that $S_2$ is a $C_2^2$-graded extension of $\mathbb{Z}C_2^2$.  Thus any categorification of $S_2$ is subject to the extension theory of \cite[Theorem 1.3]{MR2677836}.  The choice to not pursue this option was based on the inevitable need for the induction-restriction functors in the proof of Proposition \ref{s4}, and the length of the argument not significantly decreasing either way.
\end{note}

\par A description of integral fusion categories of Frobenius-Perron dimension $pq^2$ for distinct primes $p$ and $q$ was given in \cite{MR2511638}.  One slice of this description are categories which are \emph{group-theoretical}, i.e.\ those whose doubles coincide with $\mathcal{Z}(\mathrm{Vec}_G^\omega)$ for a finite group $G$ and $3$-cocycle $\omega$.  It is still a nontrivial task to determine how such a given category might be realized by group-theoretical data, but for the fusion ring $S_4$ with Frobenius-Perron dimension $52=2^2\cdot13$, the options are limited.

\begin{proposition}\label{s4}
There is no categorification of the fusion ring $S_4$.
\end{proposition}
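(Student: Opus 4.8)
The plan is to argue exactly as in the proof for $S_2$, by showing the double $\mathcal{Z}(\mathcal{C})$ of a hypothetical categorification $\mathcal{C}$ cannot exist, but with the classification of \cite{MR2511638} supplying the extra rigidity the $S_2$ argument did not require. First I would record that any categorification $\mathcal{C}$ of $S_4$ is integral of Frobenius--Perron dimension $52=2^2\cdot13$, hence pseudounitary and spherical \cite[Corollary 9.6.6]{tcat}, and that the formal codegrees of $S_4$ are $52,13,13,13,4,4,4$. Since $52=pq^2$ with $p=13$ and $q=2$, the classification of integral fusion categories of dimension $pq^2$ forces $\mathcal{C}$ to be group-theoretical, so that $\mathcal{Z}(\mathcal{C})\cong\mathcal{Z}(\mathrm{Vec}_G^\omega)$ for one of the five groups $G$ of order $52$. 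The value of this reduction is that every simple object of $\mathcal{Z}(\mathrm{Vec}_G^\omega)$ has Frobenius--Perron dimension equal to the product of a conjugacy class size and the degree of a projective irreducible representation of the associated centralizer; for $|G|=52$ this restricts the admissible dimensions of simple objects of $\mathcal{Z}(\mathcal{C})$ to $\{1,2,4,13,26\}$ and likewise constrains their ribbon twists.

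Next I would run the induction--restriction computation. Labeling the invertible simples $g_0=\mathbbm{1}_\mathcal{C},g_1,g_2,g_3$ (forming $C_2^2$) and the noninvertible simples $X_1,X_2,X_3$ of dimension $4$, the codegrees above and \cite[Theorem 2.13]{ost15} show that $I(\mathbbm{1}_\mathcal{C})$ has simple summands of dimensions $1,4,4,4,13,13,13$, all of trivial ribbon twist \cite[Proposition 3.11]{schopieray2022categorification}. Using that $C_2^2$ acts trivially on the $X_i$ together with \cite[Proposition 9.2.2]{tcat}, I would compute $F(I(g_j))=3\mathbbm{1}_\mathcal{C}+7g_j+3\sum_{k\neq 0,j}g_k+3\sum_i X_i$ for each $j\neq 0$. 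Adjunction of $F$ and $I$ then identifies the three dimension-$4$ summands of $I(\mathbbm{1}_\mathcal{C})$ as summands of $I(g_j)$, each of trivial twist, and shows the remaining summands of $I(g_j)$ have total dimension $40$ while $\langle I(g_j),I(g_j)\rangle=7$. Comparing the surviving multiplicity budget $\sum m^2=4$ against the admissible dimension set $\{1,2,4,13,26\}$ should force this remaining part to consist of four distinct central simple objects of dimensions $1,13,13,13$ (the only alternative, a single object of multiplicity $2$ and dimension $20$, is excluded since $20\notin\{1,2,4,13,26\}$).

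The final step is the twist obstruction $0=\mathrm{Tr}(\theta_{I(g_j)})$ furnished by \cite[Theorem 2.5]{ost15}. The main difficulty, and the reason $S_4$ is genuinely more delicate than $S_2$, is that the naive triangle inequality no longer closes: the trivial-twist summands contribute $12$ to the trace, while the remaining summands have total dimension $40>12$, leaving room for cancellation (for instance the twist $1$ on the dimension-$1$ object together with twists $-1,1,-1$ on the three dimension-$13$ objects would satisfy the relation). Ruling these configurations out is where the group-theoretical input must be used in earnest: I expect to need the explicit modular data of $\mathcal{Z}(\mathrm{Vec}_G^\omega)$---in particular, which invertible lift of $g_j$ and which triple of dimension-$13$ central objects can coexist with the required twists for a group of order $52$---or, equivalently, a direct check that none of the five groups $G$ reproduces the $S_4$ fusion rules (the closest candidate, $\mathrm{Rep}(C_{13}\rtimes C_4)$, has the correct dimension vector $1,1,1,1,4,4,4$ but invertible group $C_4$ rather than $C_2^2$). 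This group-theoretical bookkeeping, rather than any single dimension count, is the step I anticipate being the true obstacle.
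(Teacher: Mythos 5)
Your setup is sound and runs parallel to the paper's through the group-theoretical reduction: pseudounitarity, the codegrees $52,13,13,13,4,4,4$, the summands of $I(\mathbbm{1}_\mathcal{C})$ having dimensions $1,4,4,4,13,13,13$, and the computation $F(I(g_j))=3\mathbbm{1}_\mathcal{C}+7g_j+3\sum_{k\neq0,j}g_k+3\sum_iX_i$ are all correct. But the argument never closes, and you say so yourself: the twist relation $0=\mathrm{Tr}(\theta_{I(g_j)})$ admits cancellations (e.g.\ $12+1+13(-1-1+1)=0$ is a consistent assignment of roots of unity), so it cannot finish the proof on its own, and the two routes you propose for the remaining work are respectively unexecuted and invalid. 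Computing the modular data of $\mathcal{Z}(\mathrm{Vec}_G^\omega)$ for all five groups of order $52$ and all classes of $\omega\in H^3(G,\mathbb{C}^\times)$ is a substantial calculation you have not carried out. Worse, your ``equivalent'' reformulation---checking whether some $\mathrm{Rep}(G)$ has the $S_4$ fusion rules---is not equivalent at all: group-theoretical means $\mathcal{C}$ is categorically Morita equivalent to some $\mathrm{Vec}_G^\omega$, i.e.\ $\mathcal{Z}(\mathcal{C})\simeq\mathcal{Z}(\mathrm{Vec}_G^\omega)$ as braided categories; it does not mean $\mathcal{C}\simeq\mathrm{Rep}(G)$. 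So your observation that $\mathrm{Rep}(C_{13}\rtimes C_4)$ has the right dimension vector but the wrong invertible group rules nothing out.

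The idea you are missing is the paper's pivot: constrain the group $G$ \emph{before} touching twists, using the invertible objects of the double. Since $F$ is a tensor functor and the invertibles of $\mathcal{C}$ form $C_2^2$ (exponent $2$), any invertible $X\in\mathcal{Z}(\mathcal{C})$ satisfies $F(X^{\otimes2})\cong F(X)^{\otimes2}\cong\mathbbm{1}_\mathcal{C}$, so $X^{\otimes2}$ is an invertible summand of $I(\mathbbm{1}_\mathcal{C})$; the dimension list $1,4,4,4,13,13,13$ leaves $\mathbbm{1}_{\mathcal{Z}(\mathcal{C})}$ as the only candidate, hence every invertible object of $\mathcal{Z}(\mathcal{C})$ has order at most $2$. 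Because $\mathrm{Rep}(G)$ embeds in $\mathcal{Z}(\mathrm{Vec}_G^\omega)$, every one-dimensional representation of $G$ must then have order at most $2$, and among the five groups of order $52$ only $D_{26}$ (abelianization $C_2^2$) survives; the other four all have one-dimensional representations of order $4$ or greater. At that point the obstruction you already isolated finishes the proof in one line: the simple objects of $\mathcal{Z}(\mathrm{Vec}_{D_{26}}^\omega)$ have dimensions in $\{1,2,13,26\}$ (a class size $1$, $2$, or $13$ times a projective irreducible dimension $1$ or $2$ of the centralizers $D_{26}$, $C_{26}$, $C_2^2$), so the required dimension-$4$ summand of $I(\mathbbm{1}_\mathcal{C})$ cannot exist. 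This is exactly why your pooled dimension set $\{1,2,4,13,26\}$ was too weak to conclude: the $4$ in it comes precisely from $C_{13}\rtimes C_4$, the group that the invertibility argument eliminates.
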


\begin{proof}
Let $\mathcal{C}$ be a categorification of $S_4$, which must be pseudounitary and moreover spherical \cite[Corollary 9.6.6]{tcat}.  Even more, $\mathcal{C}$ must be group-theoretical by \cite[Theorem 1.1]{MR2511638}, i.e.\ the double $\mathcal{Z}(\mathcal{C})$ must be braided equivalent to $\mathcal{Z}(\mathrm{Vec}_G^\omega)$ for some finite group $G$ of order $52$ and $3$-cocycle $\omega\in H^3(G,\mathbb{C}^\times)$.  We will first determine there is only one possible such finite group $G$.

\par The formal codegrees of $S_4$ are $52,13,13,13,4,4,4$, hence the simple summands of the induction $I(\mathbbm{1}_\mathcal{C})$ have dimensions $1,4,4,4,13,13,13$ by \cite[Theorem 2.13]{ost15}.  As the forgetful functor $F:\mathcal{Z}(\mathcal{C})\to\mathcal{C}$ is a tensor functor, if $X\in\mathcal{O}(\mathcal{Z}(\mathcal{C})_\mathrm{pt})$, then $\mathbbm{1}_\mathcal{C}\cong F(X)^{\otimes2}\cong F(X^{\otimes2})$.  Thus $X^{\otimes2}\subset I(\mathbbm{1}_\mathcal{C})$ and $X^{\otimes2}$ is invertible.  The unit $\mathbbm{1}_{\mathcal{Z}(\mathcal{C})}$ is the only such simple object.  Moreover all invertible objects in the double of $\mathcal{C}$ have order at most 2.  Now $\mathcal{Z}(\mathrm{Vec}_G^\omega)$ contains a fusion subcategory equivalent to $\mathrm{Rep}(G)$ (refer to \cite{MR2552301}, for example), so the only possibility for a finite group $G$ of order $52$ such that $\mathcal{Z}(\mathcal{C})\simeq\mathcal{Z}(\mathrm{Vec}_G^\omega)$ is a braided equivalence is the dihedral group $D_{26}$ as for $G\not\cong D_{26}$ with $|G|=52$, $G$ has $1$-dimensional representations of order $4$ or greater.  Analysis of these twisted doubles is possible by inspection, using the methods and formulas found in \cite{MR4257620}, for example.  As $13$ is a formal codegree of $\mathcal{C}$, there must exist a simple summand $X\subset I(\mathbbm{1}_\mathcal{C})$ with $\mathrm{FPdim}(X)=4$.  None of the simple objects of $\mathcal{Z}(\mathrm{Vec}_{D_{26}}^\omega)$ have dimension 4, so no such fusion category $\mathcal{C}$ exists.
\end{proof}


\subsection{Fusion automorphisms and tensor autoequivalences}

Recall that a \emph{tensor autoequivalence} $F:\mathcal{C}\to\mathcal{C}$ of a fusion category $\mathcal{C}$ is an exact and faithful $\mathbb{C}$-linear functor along with functorial isomorphisms $J_{X,Y}:F(X)\otimes F(Y)\to F(X\otimes Y)$ for all $X,Y\in\mathcal{C}$ such that $F(\mathbbm{1}_\mathcal{C})\cong\mathbbm{1}_\mathcal{C}$ and the pair $F,J$ satifies coherence conditions \cite[Definition 2.4.1]{tcat}.  Needless to say, the disparity between automorphisms of Grothendieck rings and tensor autoequivalences is stark.  As this exposition is primarily about the former, we only illustrate this disparity in one example for a fusion ring of small rank from Figure \ref{fig:A} which is not an integral group ring, since these autoequivalences are discussed in detail in \cite[Section 2.6]{tcat}.

\par The key observation is that if there exists a fixed-point-free fusion automorphism $\phi$ of the Grothendieck ring of a fusion category $\mathcal{C}$ of prime order $p\in\mathbb{Z}_{\geq2}$ which lifts to a tensor autoequivalence of $\mathcal{C}$, this is equivalent to a \emph{categorical action} of $C_p$ on $\mathcal{C}$ \cite[Definition 4.15.1]{tcat} and one may consider the $C_p$-\emph{equivariantization} of $\mathcal{C}$, denoted $\mathcal{C}^{C_p}$ \cite[Definition 2.7.2]{tcat}.  The simple objects of the fusion category $\mathcal{C}^{C_p}$ are the $\phi$-orbits $\tilde{X}$ of the simple objects $X\in\mathcal{C}$ with $\mathrm{FPdim}(\tilde{X})=p\mathrm{FPdim}(X)$ and invertible objects $g$ for $g\in C_p$ \cite[Proposition 4.15.9]{tcat}. 

\begin{proposition}
The duality fusion automorphism of $R_{C_7\rtimes C_3}$ does not lift to a tensor autoequivalence of $\mathrm{Rep}(C_7\rtimes C_3)$.
\end{proposition}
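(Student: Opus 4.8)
The plan is to show that every $\mathbb{C}$-linear tensor autoequivalence of $\mathrm{Rep}(G)$, where $G:=C_7\rtimes C_3$, induces a permutation of $\mathrm{Irr}(G)$ that fixes each invertible object, whereas the duality automorphism $\phi$ inverts the group of invertibles and so cannot be realized. First I would record the action of $\phi$. The invertible objects of $R_{C_7\rtimes C_3}$ are the three linear characters $1,\chi_1,\chi_2$, which span $R_{\mathrm{pt}}\cong\mathbb{Z}C_3$ with $\chi_2=\chi_1^{-1}$; since duality sends each $x$ to $x^\ast$, it restricts to inversion on $\widehat{G^{\mathrm{ab}}}\cong C_3$, swapping $\chi_1\leftrightarrow\chi_2$ (and it swaps the two three-dimensional characters $\psi_1\leftrightarrow\psi_2$). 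In particular any lift of $\phi$ would have to act on the pointed subcategory as the nontrivial automorphism of $C_3$.

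The key step, and the main obstacle, is to prove that every tensor autoequivalence $F$ of $\mathrm{Rep}(G)$ is induced by a group automorphism of $G$, so that none can invert the invertibles. I would argue this through uniqueness of the fiber functor: composing $F$ with the forgetful functor $\omega:\mathrm{Rep}(G)\to\mathrm{Vec}$ yields another fiber functor $\omega\circ F$, and fiber functors on $\mathrm{Rep}(G)$ correspond, up to isomorphism, to subgroups $H\leq G$ carrying a nondegenerate class $\psi\in H^2(H,\mathbb{C}^\times)$. Because $|G|=21$ is squarefree, the Schur multiplier of $G$ and of each of its subgroups $1,C_3,C_7,G$ is trivial, so the only admissible pair is $(1,1)$; hence $\omega$ is the unique fiber functor and $\omega\circ F\cong\omega$. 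By Tannakian reconstruction $F$ then preserves $\omega$ and is isomorphic to precomposition with some $\alpha\in\mathrm{Aut}(G)$, the remaining monoidal freedom being autoequivalences that act trivially on objects and hence do not permute $\mathrm{Irr}(G)$ at all.

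It then remains to verify that no $\alpha\in\mathrm{Aut}(G)$ swaps $\chi_1\leftrightarrow\chi_2$. Since $[G,G]=C_7$ is characteristic, every $\alpha$ descends to $\mathrm{Aut}(G^{\mathrm{ab}})=\mathrm{Aut}(C_3)$, and a short computation (writing $G=\langle x,t\mid x^7=t^3=1,\ txt^{-1}=x^2\rangle$ and solving $\alpha(t)=x^k t^{-1}$, $\alpha(x)=x^m$) forces $m\equiv0\pmod 7$, so this descent map is trivial and every automorphism fixes both linear characters. Thus the permutations of $\mathrm{Irr}(G)$ realizable by tensor autoequivalences are exactly $\{\mathrm{id},\gamma\}$, where $\gamma$ is the outer automorphism fixing $\chi_1,\chi_2$ and swapping $\psi_1\leftrightarrow\psi_2$; since $\phi\neq\mathrm{id},\gamma$, the duality does not lift. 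Equivalently, in the language of equivariantization set up above, any categorical action of $C_2$ on $\mathrm{Rep}(G)$ is by symmetric autoequivalences, so $\mathrm{Rep}(G)^{C_2}$ would be Tannakian, equal to $\mathrm{Rep}(\Gamma)$ for some group $\Gamma$ of order $42$; but no group of order $42$ has irreducible representations of dimensions $1,1,2,6$, the dimensions forced by the $\phi$-orbit structure, yielding the same contradiction.
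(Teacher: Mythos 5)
Your proof is correct, but it takes a genuinely different route from the paper's. You work entirely on the Tannakian side: since $|G|=21$ is squarefree, no subgroup of $G=C_7\rtimes C_3$ admits a nondegenerate $2$-cocycle (and all Schur multipliers vanish), so by the Movshev/Etingof--Gelaki classification the forgetful functor is the unique fiber functor on $\mathrm{Rep}(G)$; reconstruction then forces every tensor autoequivalence to act on $\mathrm{Irr}(G)$ as some $\alpha\in\mathrm{Aut}(G)$, and your presentation computation (the relation $txt^{-1}=x^2$ forces $\alpha(t)\in x^kt\cdot\{e\}$, i.e.\ $j=1$) shows $\mathrm{Aut}(G)$ acts trivially on $G^{\mathrm{ab}}$, hence fixes the two linear characters that duality swaps. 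The paper instead argues by contradiction through equivariantization: it forms $\mathcal{C}=\mathrm{Rep}(G)^{C_2}$, computes its fusion rules and formal codegrees $42,7,3,2$, analyzes the induction $I(e)$ to the Drinfeld center to pin down $\mathcal{Z}(\mathcal{C})_{\mathrm{pt}}$, invokes group-theoreticity of integral fusion categories of squarefree dimension $2\cdot3\cdot7$ to conclude $\mathcal{Z}(\mathcal{C})\simeq\mathcal{Z}(\mathrm{Vec}_{D_{21}}^{\omega})$, and gets a contradiction from the dimensions of simples in those twisted doubles. Your argument is shorter and more elementary (one classification theorem plus finite group theory), and it proves something stronger: the image of the tensor autoequivalence group in $\mathrm{Aut}(K_0)$ is exactly $\{\mathrm{id},\gamma\}$ with $\gamma$ the swap of the two $3$-dimensional characters. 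What the paper's approach buys is uniformity: the center/formal-codegree machinery is the same toolkit used against $S_2$ and $S_4$, and it does not require the ambient category to be recognized as $\mathrm{Rep}$ of a group at the outset. One caveat: your closing ``equivalently'' remark (that $\mathrm{Rep}(G)^{C_2}$ would be $\mathrm{Rep}(\Gamma)$ with $|\Gamma|=42$, and no such group has character degrees $1,1,2,6$) is not self-contained --- it needs the action to be by \emph{symmetric} (braided) autoequivalences before Deligne/Tannakian reconstruction of the equivariantization applies, and that is exactly what your main argument supplies; as a standalone proof it would have a gap.
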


\begin{proof}
Assume to the contrary that duality is a tensor autoequivalence of $\mathrm{Rep}(C_7\rtimes C_3)$.  The simple objects of $\mathcal{C}:=\mathrm{Rep}(C_7\rtimes C_3)^{C_2}$, which must be pseudounitary and moreover spherical \cite[Corollary 9.6.6]{tcat},  are $e,g,X,Y$ where $\mathrm{FPdim}(X)=2$ and $\mathrm{FPdim}(Y)=6$.  Moreover $e,g,X$ form a fusion subcategory of $\mathcal{C}$ with $\mathrm{Rep}(S_3)$ fusion rules.  The fusion rules of $\mathcal{C}$ are then
\begin{align}
\left[\begin{array}{cccc}
1 & 0 & 0 & 0 \\
0 & 1 & 0 & 0 \\
0 & 0 & 1 & 0 \\
0 & 0 & 0 & 1
\end{array}
\right]\qquad\left[\begin{array}{cccc}
0 & 1 & 0 & 0 \\
1 & 0 & 0 & 0 \\
0 & 0 & 1 & 0 \\
0 & 0 & 0 & 1
\end{array}
\right] \\
\left[\begin{array}{cccc}
0 & 0 & 1 & 0 \\
0 & 0 & 1 & 0 \\
1 & 1 & 1 & 0 \\
0 & 0 & 0 & 2
\end{array}
\right]\qquad\left[\begin{array}{cccc}
0 & 0 & 0 & 1 \\
0 & 0 & 0 & 1 \\
0 & 0 & 0 & 2 \\
1 & 1 & 2 & 5
\end{array}
\right]
\end{align}
with formal codegrees $42,7,3,2$.  Therefore the simple summands $\mathbbm{1}_{\mathcal{Z}(\mathcal{C})}$, $Z_1$, $Z_2$, $Z_3$ of $I(e)$ have dimensions $1$, $6$, $14$ and $21$, respectively.  We compute
\begin{equation}\label{eqy}
F(I(e))=4e\oplus2g\oplus3X\oplus5Y.
\end{equation}
As $\mathrm{FPdim}(Y)=6$ all the summands $Y$ appearing in Equation (\ref{eqy}) must be summands of $F(Z_2)$ and $F(Z_3)$.  This forces the decompositions $F(Z_1)=e\oplus g\oplus2X$, $F(Z_2)=e\oplus g\oplus2Y$, and $F(Z_3)=e\oplus X\oplus3Y$.  Similarly
\begin{equation}
F(I(g)-Z_1-Z_2)=2g\oplus X\oplus3Y.
\end{equation}
Therefore $I(g)$ has two other nonisomorphic simple summands $Z_4$, $Z_5$ whose dimensions must be among the collection $1$, $3$, $7$, $21$ as their squares must divide $\mathrm{FPdim}(\mathcal{Z}(\mathcal{C}))=2^2\cdot3^2\cdot7^2$ \cite[Proposition 8.14.6]{tcat} and they must be odd since they contain $g$ in their forgetful image with multiplicity $1$.  The only possibility is that $F(Z_4)=g$ and $F(Z_5)=g\oplus X\oplus3Y$ without loss of generality.  Therefore $\mathcal{Z}(\mathcal{C})_\mathrm{pt}$ is rank $2$.  Now note that $\mathcal{C}$ is group-theoretical since its Frobenius-Perron dimension is $2\cdot3\cdot7$ \cite[Theorem 9.2]{MR2735754}, i.e.\ $\mathcal{Z}(\mathcal{C})$ is braided equivalent to $\mathcal{Z}(\mathrm{Vec}_G^\omega)$ for a finite group $G$ of order $42$ which can have at most $2$ isomorphism classes of $1$-dimensional representations, as these correspond to distinct invertible objects in $\mathrm{Rep}(G)\subset\mathcal{Z}(\mathrm{Vec}_G^\omega)$.  Therefore $G\cong D_{21}$.  Analysis of these twisted doubles is possible by inspection, using the methods and formulas found in \cite{MR4257620}, for example.  We find $\mathrm{FPdim}(X)=2$ for all simple noninvertible $X\in\mathcal{Z}(\mathrm{Vec}_{D_{21}}^\omega)$ over all $\omega\in H^3(D_{21},\mathbb{C}^\times)$, and so we conclude no such category $\mathcal{C}$ exists.
\end{proof}

\bibliographystyle{plain} 
\bibliography{bib}

\end{document}